\theoremstyle{plain}
\newtheorem{thm}{Theorem}
  \theoremstyle{definition}
  \theoremstyle{remark}
  \newtheorem{rem}[thm]{Remark}
  \theoremstyle{plain}
  \newtheorem{prop}[thm]{Proposition}
  \theoremstyle{plain}
  \newtheorem{lem}[thm]{Lemma}
  \theoremstyle{plain}
  \newtheorem{cor}[thm]{Corollary}
 \theoremstyle{definition}
  \theoremstyle{remark}
  \newtheorem*{rem*}{Remark}
  \theoremstyle{definition}
\newtheorem*{question*}{\it{QUESTION}}
\newtheorem*{problem*}{\it{PROBLEM}}
\newcommand{\N}{\mathbb{N}}
\newcommand{\R}{{\mathbb{R}}}
\newcommand{\C}{{\mathbb{C}}}
\newcommand{\Z}{{\mathbb{Z}}}
\newcommand{\dd}{{{\rm d}}}
\newcommand{\ii}{{\rm i}}
\newcommand{\diag}{\mathop\mathrm{diag}\nolimits}
\newcommand{\lspan}{\mathop\mathrm{span}\nolimits}
\newcommand{\Dom}{\mathop\mathrm{Dom}\nolimits}
\renewcommand{\Im}{\mathop\mathrm{Im}\nolimits}
\renewcommand{\Re}{\mathop\mathrm{Re}\nolimits}
\renewcommand{\Im}{\mathop\mathrm{Im}\nolimits}
\newcommand{\Res}{\mathop\mathrm{Res}\nolimits}
\newcommand{\sn}{\mathop\mathrm{sn}\nolimits}
\newcommand{\cn}{\mathop\mathrm{cn}\nolimits}
\newcommand{\dn}{\mathop\mathrm{dn}\nolimits}
\begin{document}

\graphicspath{{Figures/}}

\title[Spectral analysis of non-self-adjoint Jacobi operator]
{Spectral analysis of non-self-adjoint Jacobi operator associated with Jacobian elliptic functions}

\author{Petr Siegl}
\address[Petr Siegl]{
Mathematisches Institut, 
Universit\"{a}t Bern,
Alpeneggstrasse 22,
3012 Bern, Switzerland
\& On leave from Nuclear Physics Institute ASCR, 25068 \v Re\v z, Czech Republic}
\email{petr.siegl@math.unibe.ch}

\author{Franti\v sek \v Stampach}
\address[Franti\v sek \v Stampach]{
	Mathematisches Institut, 
	Universit\"{a}t Bern,
	Alpeneggstrasse 22,
	3012 Bern, Switzerland
	\& Department of Mathematics, 
	Stockholm University,
	Kr\"{a}ftriket 5,
	SE - 106 91 Stockholm, Sweden
}
%\curraddr{}
\email{stampfra@fjfi.cvut.cz}

\subjclass[2010]{47B36, 33E05}

\keywords{non-self-adjoint Jacobi operator, Weyl $m$-function, Jacobian elliptic functions}

\date{February 4, 2017}

\begin{abstract}
We perform the spectral analysis of a family of Jacobi operators $J(\alpha)$ depending on a complex parameter $\alpha$.
If $|\alpha|\neq1$ the spectrum of $J(\alpha)$ is discrete and formulas for eigenvalues and eigenvectors are established
in terms of elliptic integrals and Jacobian elliptic functions. If $|\alpha|=1$, $\alpha \neq \pm 1$, the essential spectrum of $J(\alpha)$
covers the entire complex plane. In addition, a formula for the Weyl $m$-function as well as the asymptotic 
expansions of solutions of the difference equation corresponding to $J(\alpha)$ are obtained. Finally, the completeness 
of eigenvectors and Rodriguez-like formulas for orthogonal polynomials, studied previously by Carlitz, are proved.
\end{abstract}

\maketitle

\section{Introduction}

We investigate spectral properties of a one-parameter family of Jacobi operators $J(\alpha)$, $\alpha \in \C$, acting in 
$\ell^{2}(\N)$, with emphasis on obtaining the spectral results in the most explicit form.
The operator $J(\alpha)$ is determined by the semi-infinite Jacobi matrix $\mathcal{J}(\alpha)$ whose diagonal vanishes and off-diagonal sequence $\{w_{n}\}_{n \in  \N}$ is given by
\begin{equation}
 w_{n}=\begin{cases}
        n & \mbox{ for } n \mbox{ odd},\\
        \alpha n & \mbox{ for } n \mbox{ even}.
       \end{cases}
\label{eq:def_seq_w}
\end{equation}
Thus, with respect to the standard basis of $\ell^{2}(\N)$, the matrix $\mathcal{J}(\alpha)$ is of the form
\begin{equation}
 \mathcal{J(\alpha)}=\begin{pmatrix}
 0 & 1\\
 1 & 0 & 2\alpha\\
 & 2\alpha & 0 & 3\\
 & & 3 & 0 & 4\alpha\\
 & & & \ddots & \ddots & \ddots
\end{pmatrix}\!.
\label{eq:def_jacobi_mat}
\end{equation}

The operator $J(\alpha)$ is self-adjoint if and only if $\alpha\in\R$. We focus mainly on the non-self-adjoint case with a general $\alpha\in\C$, although we restrict ourselves to $|\alpha| \leq 1$ in the body of the paper.
For $|\alpha|>1$, the spectral analysis is in all aspects very similar and the main results for this case, omitting the detailed proofs, are summarized in the last section.

We investigate the localization of essential spectrum and eigenvalues, asymptotic properties of eigenvectors, their completeness and possible basisness. 
It turns out that the operator $J(\alpha)$ constitutes one of not many concrete unbounded non-self-adjoint operators 
whose spectral properties can be described explicitly and, in addition, whose spectrum is entirely real for a certain 
(non-real) range of parameter $\alpha$. Furthermore, the Jacobi matrix $\mathcal{J}(\alpha)$ belongs to the class with periodically 
modulated unbounded weights, where the so-called spectral phase transition phenomena has been observed \cite{janas_raot01,naboko_jat09,simonov_otaa07}, 
see also \cite{janas_jfa02}, however, all in the self-adjoint setting.  
To our best knowledge, except for the classical example of a perturbed shift operator in $\ell^2(\Z)$ and various versions of it, see \cite[Ex.~IV.3.8]{kato66},  $J(\alpha)$ is the first instance of a non-self-adjoint (unbounded) Jacobi operator with the transition property, namely with a sudden and complete change of the spectral character when the parameter $\alpha$ crosses the  unit circle.

The vast literature on specific families of self-adjoint Jacobi operators shows that spectral properties are usually closely related with special functions. In our case, the major role is played by elliptic integrals and Jacobian elliptic functions, where the parameter $\alpha$ enters as a (complex) modulus. Moreover, the spectral analysis of a Jacobi operator can be reformulated to the study of specific properties of corresponding family of orthogonal polynomials, see \cite{akhiezer90}. The family associated with $\mathcal{J}(\alpha)$ does not belong to the Askey-scheme, which can serve, see \cite{koekoek10}, as a rich source of Jacobi operators with explicitly solvable spectral problem, however, it was studied before by Carlitz in \cite{carlitz_dmj60,carlitz_dmj61} for $\alpha \in (0,1)$.

In Section \ref{sec:gen_prop}, we introduce the unique closed and densely defined Jacobi operator $J(\alpha)$ associated with the matrix $\mathcal{J}(\alpha)$ and derive some of its fundamental properties relying on general theorems of spectral and perturbation theory for linear operators.
It is shown that the residual spectrum of $J(\alpha)$ is empty, the resolvent of $J(\alpha)$ is compact if $|\alpha|<1$, and, on the other hand, the essential spectrum of $J(\alpha)$ is non-empty if $|\alpha|=1$.

Section \ref{sec:sa_case} is devoted to the self-adjoint case, i.e., for $\alpha\in\R$. We start with a simple algebraic identity, which might be deduced from a continued fraction formula going back to Stieltjes, and obtain a formula for the Fourier transform of the spectral measure. This yields the spectrum of $J(\alpha)$ immediately. Moreover,
a suitably applied Laplace transform enables us to derive the Mittag-Leffler expansion for the Weyl $m$-function.

Main results are derived within Section \ref{sec:nsa_case} where the non-self-adjoint case is treated. If $|\alpha|< 1$, we obtain expressions for eigenvalues of $J(\alpha)$, integral formulas for eigenvectors and their asymptotic expansions for the index going to infinity. Moreover, the set of eigenvectors is shown to be complete in $\ell^{2}(\N)$. For $|\alpha|=1$ and $\alpha\neq\pm1$, we prove by constructing singular sequences that the essential spectrum of $J(\alpha)$ coincides with all of $\C$. In addition, a Rodriguez-like formula for the associated orthogonal polynomials is derived as well as certain generating function formulas for
quantities closely related to eigenvectors. 
The question whether the set of eigenvectors forms the Riesz (or Schauder) basis remains open, nonetheless, the authors
incline to the negative answer. A numerical analysis of pseudospectra, supporting the opinion, is presented and a formula for the norm of eigenprojections, which might be useful in excluding the basisness of eigenvectors, is established.

Finally, Section \ref{sec:case_alp_geq_1} contains a brief summary of corresponding results for $|\alpha|>1$ and the paper is concluded by appendices on selected properties of Jacobian elliptic functions and numerical analysis of pseudospectra of $J(\alpha)$.

\section{General properties of $J(\alpha)$} \label{sec:gen_prop}

Recall that, within the standard construction of a linear operator associated with matrix $\mathcal{J}(\alpha)$, one defines the couple of operators $J_{\min}(\alpha)$ and $J_{\max}(\alpha)$, see, for example, \cite[Sec.~2]{beckermann_jcam01}. 
In more detail, for $x \in \ell^2(\N)$, which is to be understood as semi-infinite column vector in the following, $\mathcal{J}(\alpha)x$ is given by the formal matrix multiplication. 
The minimal operator $J_{\min}(\alpha)$ is defined as the operator closure of an auxiliary operator $J_{0}(\alpha)$, 
\[
J_{0}(\alpha) x = \mathcal{J}(\alpha)x, \quad \Dom(J_{0}(\alpha))=\lspan\{e_n\mid n\in\N\},
\]
where $e_n$ stands for the $n$th vector of the standard basis of $\ell^{2}(\N)$; $J_{0}(\alpha)$ can be shown to be always closable. The maximal operator $J_{\max}(\alpha)$ is defined as
\[
J_{\max}(\alpha) x = \mathcal{J}(\alpha)x, \quad \Dom(J_{\max}(\alpha))=\left\{x\in\ell^{2}(\N) \mid J_{\max}(\alpha)x\in\ell^{2}(\N)\right\}\!.
\]
Clearly, $J_{\min}(\alpha)\subset J_{\max}(\alpha)$, however, $J_{\min}(\alpha)=J_{\max}(\alpha)$ in our case.
This equality is guaranteed by Carleman's sufficient condition \cite[Ex.~2.7]{beckermann_jcam01}:
\[
 \sum_{n=1}^{\infty}\frac{1}{|w_n|}=\infty,
\]
which holds for $w_n$ given by \eqref{eq:def_seq_w} if $\alpha\neq0$. 
In addition, one has $J_{\rm min}(\alpha)^{*}=J_{\max}(\overline{\alpha})$ for all $\alpha\in\C$. 
The situation for $\alpha=0$ is somewhat special but trivial and the equality $J_{\min}(0)=J_{\max}(0)$ 
remains true as well. Thus, the subscripts $\min$ and $\max$ can be omitted and the unique Jacobi operator
determined by $\mathcal{J}(\alpha)$ is denoted by $J(\alpha)$.

Let us summarize these facts in the following proposition.

\begin{prop}
 For all $\alpha\in\C$, Jacobi matrix \eqref{eq:def_jacobi_mat} determines the unique unbounded Jacobi operator $J(\alpha)$,
 for which it holds $J(\alpha)^{*}=J(\overline{\alpha})$. Consequently, operator $J(\alpha)$ is $C$-self-adjoint, i.e. 
 $J(\alpha)^{*}=CJ(\alpha)C$, where $C$ is the complex conjugation operator on $\ell^{2}(\N)$.
\end{prop}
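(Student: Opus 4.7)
The plan is to verify the three claims in sequence, since most ingredients have already been prepared in the preceding paragraphs. First, to establish that $\mathcal{J}(\alpha)$ determines a unique closed densely defined operator, I would split into the cases $\alpha\neq 0$ and $\alpha=0$. For $\alpha\neq 0$, I would just compute
\[
\sum_{n=1}^\infty \frac{1}{|w_n|}=\sum_{\substack{n\geq 1\\n\text{ odd}}}\frac{1}{n}+\frac{1}{|\alpha|}\sum_{\substack{n\geq 2\\n\text{ even}}}\frac{1}{n}=\infty,
\]
so Carleman's sufficient condition cited from Beckermann applies and yields $J_{\min}(\alpha)=J_{\max}(\alpha)$. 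For $\alpha=0$ the even off-diagonal weights vanish, so $\mathcal{J}(0)$ decouples into a block-diagonal sum of $2{\times}2$ matrices $\begin{pmatrix}0 & 2k-1\\ 2k-1 & 0\end{pmatrix}$ acting on $\lspan\{e_{2k-1},e_{2k}\}$; then $J_{\min}(0)$ and $J_{\max}(0)$ both coincide with the natural orthogonal sum, since the closure of finite linear combinations of standard basis vectors is already the maximal domain $\{x\in\ell^2(\N):\sum_{k}(2k-1)^2(|x_{2k-1}|^2+|x_{2k}|^2)<\infty\}$. Either way, $J(\alpha):=J_{\min}(\alpha)=J_{\max}(\alpha)$ is a single well-defined closed operator.

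Second, for the identity $J(\alpha)^*=J(\overline{\alpha})$ I would simply invoke the general Jacobi-matrix fact $J_{\min}(\alpha)^*=J_{\max}(\overline{\alpha})$ that is already noted in the text, and combine it with the equality $J_{\min}=J_{\max}$ just proved on both sides. (If one preferred to verify the general identity in place, one would check that $\langle J_{\min}(\alpha)x,y\rangle=\langle x,\mathcal{J}(\overline{\alpha})y\rangle$ for $x$ of finite support, which gives $J_{\min}(\alpha)^*\supseteq J_{\max}(\overline{\alpha})$, and conversely use truncations of $y$ to see that any $y$ in $\Dom(J_{\min}(\alpha)^*)$ must satisfy $\mathcal{J}(\overline{\alpha})y\in\ell^2(\N)$.)

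Third, for $C$-self-adjointness let $C$ denote complex conjugation on $\ell^2(\N)$, an antilinear isometric involution. Because the entries of $\mathcal{J}(\alpha)$ are either integers or of the form $\alpha n$, one has the pointwise identity $\overline{\mathcal{J}(\alpha)\overline{x}}=\mathcal{J}(\overline{\alpha})x$ for any $x$ with finitely many nonzero entries. In particular $x\in\Dom(J(\overline{\alpha}))$ iff $\overline{x}\in\Dom(J(\alpha))$, so $C\Dom(J(\overline{\alpha}))=\Dom(J(\alpha))$ and $CJ(\alpha)Cx=J(\overline{\alpha})x$ on $\Dom(J(\overline{\alpha}))$. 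Combined with the adjoint formula of the previous step, this yields $J(\alpha)^*=CJ(\alpha)C$.

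I do not expect any genuine obstacle: the statement is essentially a bookkeeping summary of the discussion preceding it, and the only points that require a line of argument are (i) verifying Carleman's sum for the specific weights \eqref{eq:def_seq_w}, (ii) treating the degenerate case $\alpha=0$ via the block decomposition, and (iii) matching the domains in the $C$-conjugation identity. Among these, (iii) is the most pedantic, but it reduces to the elementary observation that $C$ is an antilinear isometric involution that intertwines the formal actions of $\mathcal{J}(\alpha)$ and $\mathcal{J}(\overline{\alpha})$.
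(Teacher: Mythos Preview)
Your proposal is correct and mirrors exactly the approach of the paper: the proposition is stated there as a summary of the preceding discussion (Carleman's condition for $\alpha\neq 0$, the trivial block-diagonal case $\alpha=0$, the general identity $J_{\min}(\alpha)^{*}=J_{\max}(\overline{\alpha})$, and the ensuing $C$-self-adjointness), with no separate formal proof given. You have simply written out the bookkeeping details that the paper leaves implicit.
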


In the next corollary, we summarize several general spectral properties of $J(\alpha)$ that follow immediately 
from its $C$-self-adjointness, see e.g.~\cite[Sec.~III.5, IX.1, Thm.~IX.1.6]{edmunds87} and \cite[Cor.~2.1]{borisov_ieot08}. 
Notice that there are several definitions of essential spectra for non-self-adjoint operators, here we follow the 
notations of \cite[Sec.~IX.1]{edmunds87}. In this paper, we work with $\sigma_{e2}$, which can be characterized 
by singular sequences, see \cite[Def.~IX.1.2, Thm.~IX.1.3]{edmunds87}.
\begin{cor}
For all $\alpha\in\C$, the residual part of the spectrum of $J(\alpha)$ is empty and four definitions of essential spectra (see \cite[Sec.~IX.1]{edmunds87}) coincide, namely 
$$\sigma_{e1}(J(\alpha)) =\sigma_{e2}(J(\alpha))=\sigma_{e3}(J(\alpha))=\sigma_{e4}(J(\alpha)).$$
\end{cor}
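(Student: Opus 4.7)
The plan is to deduce both claims as direct consequences of the $C$-self-adjointness $J(\alpha)^{*}=CJ(\alpha)C$ established in the preceding proposition. Rewriting $J(\alpha)^{*}-\overline{\lambda}=C(J(\alpha)-\lambda)C$ and using that $C$ is an antiunitary involution yields the key identity
\[
\Ker(J(\alpha)^{*}-\overline{\lambda})=C\,\Ker(J(\alpha)-\lambda),\qquad \lambda\in\C,
\]
so these two kernels have the same (possibly infinite) dimension, and $J(\alpha)-\lambda$ has closed range if and only if $J(\alpha)^{*}-\overline{\lambda}$ does.

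For the emptiness of the residual spectrum, I would argue by contradiction. If some $\lambda$ belonged to $\sigma_{r}(J(\alpha))$, then $J(\alpha)-\lambda$ would be injective with non-dense range. The injectivity together with the displayed identity forces $\Ker(J(\alpha)^{*}-\overline{\lambda})=\{0\}$; but the standard orthogonality $\overline{\Ran(J(\alpha)-\lambda)}^{\perp}=\Ker(J(\alpha)^{*}-\overline{\lambda})$ then makes the range dense, contradicting the assumption.

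For the coincidence of the four essential spectra, one starts from the general inclusions $\sigma_{e1}\subseteq\sigma_{e2}\subseteq\sigma_{e3}\subseteq\sigma_{e4}$, valid for any closed densely defined operator. The displayed identity immediately gives $\dim\Ker(J(\alpha)-\lambda)=\dim\Ker(J(\alpha)^{*}-\overline{\lambda})$, so whenever $J(\alpha)-\lambda$ is semi-Fredholm it is in fact Fredholm with index zero. This is precisely the condition known to collapse the chain of inclusions into equalities; one then invokes \cite[Thm.~IX.1.6]{edmunds87}, or equivalently reads the conclusion directly off \cite[Cor.~2.1]{borisov_ieot08}.

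Since the $C$-self-adjointness is already in hand from the preceding proposition, there is no substantive obstacle at this stage: both assertions reduce to a mechanical application of general spectral theory for $C$-self-adjoint operators, the only care being to match the precise definitions of $\sigma_{e1},\dots,\sigma_{e4}$ adopted in \cite[Sec.~IX.1]{edmunds87} when verifying that the index-zero property forces equality across the chain.
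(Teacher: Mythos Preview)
Your proposal is correct and follows exactly the route the paper intends: the paper does not give a proof of this corollary at all, but simply records it as an immediate consequence of $C$-self-adjointness, citing \cite[Sec.~III.5, IX.1, Thm.~IX.1.6]{edmunds87} and \cite[Cor.~2.1]{borisov_ieot08}. Your argument is precisely an unpacking of what those references contain, so there is nothing to add or correct.
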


Note that operators $J(\alpha)$ and $J(-\alpha)$ are unitarily equivalent via the unitary operator
$U=\diag(1,1,-1,-1,1,1,-1,-1,\dots)$. Hence, if spectral  properties of $J(\alpha)$ are investigated, 
the range of $\alpha$ can be restricted to a half-plane, for example $\Re \alpha\geq0$.

Next, $J(\alpha)$ has a compact resolvent if $|\alpha|<1$, but it is not 
the case if $|\alpha|=1$.

\begin{prop}\label{prop:J_inv}
	The following statements hold true.
\begin{enumerate}[{\upshape i)}]
	\item If $|\alpha|<1$, then $0 \in \rho(J(\alpha))$ and $J(\alpha)^{-1}$ is a Hilbert-Schmidt operator.
%	 with
%	the Hilbert-Schmidt norm	 
%	 \begin{equation}
%	 \|J(\alpha)^{-1}\|_{2}^{2}=2\sum_{m,n\geq0}|\alpha|^{2n}\left[\frac{(2m-1)!!}{(2m)!!}\frac{(2n+2m)!!}{(2n+2m+1)!!}\right]^{2}<\infty.
%	 \end{equation}
%	where $(-1)!!=0!!=1$, by convention.
\item If $|\alpha|=1$, then $0 \in \sigma_{e2}(J(\alpha))$.
\end{enumerate}
\end{prop}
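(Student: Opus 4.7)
The plan is to exploit the block anti-diagonal structure of $J(\alpha)$ coming from the vanishing diagonal of $\mathcal J(\alpha)$. Decomposing $\ell^2(\N)=\mathcal H_o\oplus\mathcal H_e$ into the subspaces supported on odd and even indices respectively, $J(\alpha)$ takes the form $\begin{pmatrix}0 & B^T\\ B & 0\end{pmatrix}$, where $B=B(\alpha):\mathcal H_o\to\mathcal H_e$ is upper bidiagonal with diagonal $(1,3,5,\dots)$ and superdiagonal $(2\alpha,4\alpha,6\alpha,\dots)$. Hence $J(\alpha)$ is invertible iff $B(\alpha)$ is, and $\|J(\alpha)^{-1}\|_{HS}^2=2\|B(\alpha)^{-1}\|_{HS}^2$; everything reduces to the triangular operator $B$.

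For (i), back-substitution in $(2k-1)y_k+2k\alpha y_{k+1}=x_k$ produces the explicit formula
\[
(B^{-1})_{k,k+j}=\frac{(-\alpha)^j}{2(k+j)-1}\,\prod_{i=0}^{j-1}\frac{2(k+i)}{2(k+i)-1},\qquad j\geq 0,
\]
with vanishing entries below the diagonal. Writing the product in Gamma functions and using $\Gamma(m)/\Gamma(m-\tfrac12)\sim m^{1/2}$ gives the uniform bound $|(B^{-1})_{k,k+j}|\leq C|\alpha|^j/\sqrt{k(k+j)}$, whence $\sum_{k,j}|(B^{-1})_{k,k+j}|^2<\infty$ for $|\alpha|<1$ (the logarithmic growth of $\sum_{k\geq 1}1/(k(k+j))$ is killed by the geometric factor $|\alpha|^{2j}$). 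A short density argument then confirms $BB^{-1}=I=B^{-1}B$, finishing (i).

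For (ii), the strategy is to construct a Weyl singular sequence at $\lambda=0$ from the formal eigenvector. Solving $(2k-2)\alpha u_{2k-2}+(2k-1)u_{2k}=0$ with $u_{2k-1}\equiv 0$ yields $u_{2k}=(-\alpha)^{k-1}(2k-2)!!/(2k-1)!!$, which by Wallis satisfies $|u_{2k}|^2\sim\pi/(4k)$ when $|\alpha|=1$, so $u$ misses $\ell^2(\N)$ only logarithmically. A sharp cutoff fails because the linearly growing weights dominate the $1/\sqrt k$ decay of $u$. Instead I would multiply $u$ by a trapezoidal cutoff
\[
\chi_{N,M}(k)=\begin{cases}0,&k\notin[N,NM^3],\\ \log(k/N)/\log M,&k\in[N,NM],\\ 1,&k\in[NM,NM^2],\\ \log(NM^3/k)/\log M,&k\in[NM^2,NM^3],\end{cases}
\]
and use the telescoping identity
\[
(\mathcal J(\alpha)\chi_{N,M}u)_{2k-1}=w_{2k-2}\bigl(\chi_{N,M}(k-1)-\chi_{N,M}(k)\bigr)u_{2k-2}\qquad(k\geq 2).
\]
Together with $|\chi_{N,M}(k-1)-\chi_{N,M}(k)|=O(1/(k\log M))$ on the transition intervals, this gives $\|J(\alpha)\chi_{N,M}u\|^2=O(1/\log M)$, while the plateau yields $\|\chi_{N,M}u\|^2\asymp\log M$. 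Letting $M=M_N\to\infty$ and normalising, the vectors $x_N:=\chi_{N,M_N}u/\|\chi_{N,M_N}u\|$ satisfy $\|x_N\|=1$, $x_N\rightharpoonup 0$ (their supports escape to infinity), and $\|J(\alpha)x_N\|\to 0$, hence $0\in\sigma_{e2}(J(\alpha))$.

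The main obstacle is the singular sequence construction in (ii): the linearly growing weights $w_n\asymp n$ combined with the marginal $1/\sqrt n$ decay of $u$ force the use of a cutoff window that is long on a logarithmic scale, so that the first difference $\chi(k-1)-\chi(k)\asymp 1/(k\log M)$ can absorb $w_{2k-2}$. Part (i), by contrast, is a direct calculation from the explicit inverse of $B$.
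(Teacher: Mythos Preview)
Your argument is correct in both parts, but the execution differs from the paper's.

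For (i) you exploit the odd/even block decomposition and invert the bidiagonal operator $B$ directly; the paper instead writes down the Green matrix $R_{j,k}=u_{\min(j,k)}v_{\max(j,k)}$ from two explicit solutions $u,v$ of $w_{n-1}y_{n-1}+w_n y_{n+1}=0$ and bounds its Hilbert--Schmidt norm via the central binomial estimate $\binom{2n}{n}\asymp 4^n/\sqrt n$. The two computations are really the same object viewed differently: your $(B^{-1})_{k,k+j}$ coincides (up to relabelling) with the nonzero entries of $R$, and your Gamma-ratio bound $\Gamma(m)/\Gamma(m-\tfrac12)\asymp m^{1/2}$ is equivalent to the binomial bound. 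One small slip: $\sum_{k\ge 1}1/(k(k+j))$ does not grow logarithmically in $j$ but is uniformly bounded (indeed $\le \pi^2/6$), which only makes your conclusion easier.

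For (ii) the approaches genuinely diverge. You take the formal null-solution supported on \emph{even} indices and localise it with a trapezoidal cutoff that is slowly varying on a logarithmic scale, so that the discrete Leibniz remainder $w_{2k-2}(\chi(k-1)-\chi(k))u_{2k-2}$ is square-summable with norm $O((\log M)^{-1/2})$. The paper instead takes the null-solution supported on \emph{odd} indices and regularises by an exponential damping $u_n(a)=a^n u_n$ with $a\to 1^-$; the telescoping produces a global factor $(1-a^2)$, and the estimates reduce to the closed-form sums $\sum a^{4n}/n=-\ln(1-a^4)$ and $\sum n a^{4n}=a^4/(1-a^4)^2$. Your cutoff is the standard Weyl-sequence device and makes the ``supports escape to infinity'' condition transparent; the paper's exponential damping avoids the two-parameter bookkeeping and yields cleaner closed formulas. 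Either way the key point is the same: the borderline $1/\sqrt{k}$ decay forces a regularisation whose discrete derivative is $o(1/k)$.
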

\begin{proof}
The verification of the statement (i) is trivial for $\alpha=0$. Further we assume $\alpha\neq0$.

Denote by $\{u_n\}_{n\geq1}$ and $\{v_n\}_{n\geq1}$ the two solutions of the second-order difference equation
\begin{equation}\label{dif.eq.J0}
w_{n-1}y_{n-1}+w_{n}y_{n+1}=0, \quad n\geq2,
\end{equation}
determined by the initial values $u_{1}=1$, $u_{2}=0$  and  $v_{1}=0$, $v_{2}=1$.
A straightforward computation leads to formulas
\begin{align}
u_{2n}&=0,& u_{2n+1} &= (-1)^n \alpha^{-n} \frac{(2n-1)!!}{(2n)!!} 
= (-1)^n \alpha^{-n} \frac{1}{4^n} \binom{2n}{n},
\label{eq:eigenvec_0}
\\
v_{2n+1}&=0,& v_{2n+2}&=(-1)^{n}\alpha^{n}\frac{(2n)!!}{(2n+1)!!}
= (-1)^{n}\alpha^{n}\frac{4^n}{n+1}\frac{1}{\binom{2n+1}{n}}, & n \in \N.
\end{align}
Clearly, the matrix $R$ with elements
\[
R_{j,k}=\begin{cases} u_{j}v_{k}, & \quad  1\leq j\leq k,\\
u_{k}v_{j}, & \quad  1\leq k\leq j,
\end{cases}
\]
is the formal inverse to $J(\alpha)$. Substituting the explicit expressions for $u$ and $v$ into the last formula,
we get (where $(-1)!!=0!!=1$ by convention)
\begin{align*}
R_{2m+1,2n+2}&=(-1)^{m+n}\alpha^{n-m}\frac{(2m-1)!!}{(2m)!!}\frac{(2n)!!}{(2n+1)!!}, \quad 0\leq m\leq n,
\\
R_{2m+2,2n+1}&=(-1)^{m+n}\alpha^{m-n}\frac{(2n-1)!!}{(2n)!!}\frac{(2m)!!}{(2m+1)!!}, \quad 0\leq n\leq m;
\end{align*}
all other entries vanish.

Proof of the statement (i): If $|\alpha|<1$, then matrix $R$ represents a bounded, even a Hilbert-Schmidt operator on
$\ell^{2}(\mathbb{N})$, thus this operator coincides with $J(\alpha)^{-1}$.
Indeed, for the Hilbert-Schmidt norm of $R$, one has
\[
\|R\|_2^{2}=
2\sum_{m,n\geq0}|R_{2m+1,2n+2m+2}|^{2}=2\sum_{m,n\geq0}|\alpha|^{2n}\left[\frac{(2m-1)!!}{(2m)!!}\frac{(2n+2m)!!}{(2n+2m+1)!!}\right]^{2}
\]
and the expression in the squared brackets can be rewritten as
\begin{equation*}
\frac{4^{n}}{2m+2n+1}\binom{2m}{m}\bigg/\binom{2m+2n}{m+n}.
\end{equation*}
The well-known bound for the central binomial coefficient 
\begin{equation} \label{bin.bound}
\frac{4^{n}}{2\sqrt{n}}\leq\binom{2n}{n}\leq\frac{4^{n}}{\sqrt{3n+1}}, \quad n\in\mathbb{N},
\end{equation}
and further elementary estimates show that $\|R\|_2<\infty$.

Proof of the statement (ii): If $|\alpha|= 1$, we construct a singular sequence for $J(\alpha)$,
see \cite[Def.~IX.1.2, Thm.~IX.1.3]{edmunds87}.
For $a \in (0,1)$, we define sequences $u(a)$ with entries
\begin{equation}\label{una.def}
u_n(a)= a^n u_n, \quad n \in \N,
\end{equation}
where $u_n$ are as in \eqref{eq:eigenvec_0}. It follows from \eqref{bin.bound} that $u(a) \in \ell^2(\N)$ for all $a \in (0,1)$ and moreover
\begin{equation}\label{ua.lb}
\|u(a)\|^2 
= 
\sum_{n = 0}^\infty a^{4n+2} \left( \frac{1}{4^n} \binom{2n}{n} \right)^2
\geq
\frac{a^2}4 \sum_{n = 1}^\infty \frac{a^{4n}}{n} 
= 
-\frac{a^2}4  \ln\left(1-a^4\right).
\end{equation} 
On the other hand, since $u$ is the solution of the difference equation \eqref{dif.eq.J0}, we get 
\begin{align*}
(\mathcal{J}(\alpha)u(a))_{2n-1}&=0,& &
\\
(\mathcal{J}(\alpha)u(a))_{2n}&= w_{2n-1} a^{2n-1} u_{2n-1} + w_{2n} a^{2n+1} u_{2n+1} &&
\\& =-w_{2n} a^{2n-1}(1-a^2)u_{2n+1},& n \in \N.&
\end{align*}
Hence $u(a)\in\Dom J(\alpha)$ for all $a\in(0,1)$, and, using \eqref{bin.bound} again, we obtain
\begin{equation}\label{J0.ub}
\begin{aligned}
\|J(\alpha)u(a)\|^2  &= \frac{4(1-a^2)^2}{a^2} \sum_{n = 1}^\infty n^2 a^{4n} \left( \frac{1}{4^n} \binom{2n}{n} \right)^2
\\
& 
\leq\frac{2(1-a^2)^2}{a^2}  \sum_{n = 1}^\infty n a^{4n} = \frac{2a^2}{(1+a
	^2)^2}.
\end{aligned}
\end{equation}
By putting \eqref{una.def}, \eqref{ua.lb} and \eqref{J0.ub} together, we receive
\begin{equation}
\forall k \in \N, \quad \lim_{a \to 1-}\frac{\langle e_k, u(a)\rangle}{\|u(a)\|} = 0 
\quad \text{and} \quad
\lim_{a \to 1-} \frac{\|J(\alpha)u(a)\|}{\|u(a)\|} = 0,
\end{equation}
thus $0 \in \sigma_{e2}(J(\alpha))$.
\end{proof}

For $|\alpha|<1$, the operator $J(\alpha)$ can be viewed as a perturbation of $J(0)$ with the
relative bound smaller than $1$. For later purposes we formulate the following lemma.

\begin{lem}\label{lem:pert}
Let $\alpha \in \C$, $|\alpha|<1$. Then
\begin{enumerate}[\upshape (i)]
	\item for every $\varepsilon>0$, there exists $C(\varepsilon)>0$ such that, for all $u \in \Dom(J(0))$,
	\begin{equation}\label{J.alph.rb.1}
	\|(J(\alpha)-J(0))u\|
	\leq (1+\varepsilon)|\alpha|  \|J(0)u\| + C(\varepsilon)\|u\|,
	\end{equation}
	
	\item \label{prop:J.com.res.an} for every $r \in (0,1)$, there exists a non-empty open set $\mathcal Z_r \subset \C$ with $\mathcal Z_r \cap \R = \emptyset$ such that, for all $z \in  \mathcal Z_r$ and all $\alpha \in B_r(0)$,  $(J(\alpha)-z)^{-1}$ exists and it is a holomorphic bounded-operator-valued function of $\alpha$ on $B_r(0)$.
\end{enumerate}
\end{lem}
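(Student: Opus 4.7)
The plan for part (i) is a direct term-wise estimate. The perturbation $B := J(\alpha) - J(0)$ equals $\alpha M$, where $M$ is the operator with off-diagonal entries $2k$ at positions $(2k,2k+1)$ and $(2k+1,2k)$ for $k\geq 1$ and zeros elsewhere. A direct computation then gives $\|Bu\|^{2} = |\alpha|^{2}\sum_{k\geq 1}(2k)^{2}(|u_{2k}|^{2}+|u_{2k+1}|^{2})$, while $\|J(0)u\|^{2} = \sum_{k\geq 1}(2k-1)^{2}(|u_{2k-1}|^{2}+|u_{2k}|^{2})$. Comparing coefficient by coefficient, $u_{2k}$ appears with weight $(2k)^{2}$ in the first sum and $(2k-1)^{2}$ in the second, while $u_{2k+1}$ appears with weight $(2k)^{2}$ in the first and $(2k+1)^{2}$ in the second (after reindexing $j=k+1$). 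Since $(2k)^{2}/(2k-1)^{2}\to 1$ and $(2k)^{2}\leq (2k+1)^{2}$, for any $\varepsilon>0$ there exists $K=K(\varepsilon)$ with $(2k)^{2}\leq(1+\varepsilon)^{2}(2k-1)^{2}$ for $k\geq K$. Splitting the sum at $K$ yields $\|Bu\|^{2}\leq (1+\varepsilon)^{2}|\alpha|^{2}\|J(0)u\|^{2}+(2K)^{2}|\alpha|^{2}\|u\|^{2}$, and taking square roots (together with $|\alpha|\leq 1$) gives \eqref{J.alph.rb.1}.

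For part (ii), the plan is a standard Neumann series argument combining (i) with the explicit spectral picture of $J(0)$. Since $\mathcal{J}(0)$ decomposes into $2\times 2$ self-adjoint blocks $\left(\begin{smallmatrix}0 & 2n-1\\ 2n-1 & 0\end{smallmatrix}\right)$, the operator $J(0)$ is self-adjoint with $\sigma(J(0))=\{\pm(2n-1):n\in\N\}$. For $z=it_{0}$ with $t_{0}>0$ large, the spectral theorem gives $\|(J(0)-it_{0})^{-1}\|=1/t_{0}$ and $\|J(0)(J(0)-it_{0})^{-1}\|=\sup_{n}(2n-1)/\sqrt{(2n-1)^{2}+t_{0}^{2}}\leq 1$. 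Substituting $u=(J(0)-it_{0})^{-1}w$ into \eqref{J.alph.rb.1} and using $J(0)(J(0)-z)^{-1}=I+z(J(0)-z)^{-1}$, one obtains $\|\alpha M(J(0)-it_{0})^{-1}\|\leq |\alpha|\bigl[(1+\varepsilon)+C(\varepsilon)/t_{0}\bigr]$. Choosing $\varepsilon$ so that $(1+\varepsilon)r<1$ and then $t_{0}$ sufficiently large makes this strictly less than $1$ uniformly for $\alpha\in B_{r}(0)$. The factorization $J(\alpha)-z=(I+\alpha M(J(0)-z)^{-1})(J(0)-z)$ on $\Dom(J(0))$ then yields $(J(\alpha)-it_{0})^{-1}$ as a norm-convergent operator-valued power series in $\alpha$, and thus as a holomorphic function of $\alpha$ on $B_{r}(0)$. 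A small open disc $\mathcal{Z}_{r}$ centered at $it_{0}$, disjoint from $\R$, can then be chosen such that these estimates and the Neumann series persist for all $z\in\mathcal{Z}_{r}$, by continuity of $z\mapsto(J(0)-z)^{-1}$.

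The main technical obstacle will be the index bookkeeping in (i): the off-diagonal pattern of $M$ is offset by one position from that of $J(0)$, so the coefficients multiplying each $|u_{n}|^{2}$ in $\|Mu\|^{2}$ must be carefully matched with their offset counterparts in $\|J(0)u\|^{2}$. Once this alignment is done, the ratios $(2k)^{2}/(2k-1)^{2}\to 1$ deliver the relative bound with constant $1+\varepsilon$, and part (ii) reduces to a standard Kato-type perturbation argument.
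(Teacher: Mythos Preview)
Your proof is correct and follows essentially the same route as the paper: a direct coefficient-by-coefficient comparison of $\|\alpha M u\|^{2}$ with $\|J(0)u\|^{2}$ for part (i), and a Neumann-series argument based on the self-adjointness of $J(0)$ for part (ii). The only differences are cosmetic (the paper passes through the intermediate weighted norm $\sum n^{2}|u_{n}|^{2}$ and uses the cruder bound $\|J(0)(J(0)-z)^{-1}\|\le |z|/|\Im z|$ for general $z\notin\R$, whereas you compare terms directly and use the sharper bound $\le 1$ on the imaginary axis); note also the small slip in your displayed bound for $\|\alpha M(J(0)-it_0)^{-1}\|$, which should read $(1+\varepsilon)|\alpha|+C(\varepsilon)/t_0$ rather than $|\alpha|[(1+\varepsilon)+C(\varepsilon)/t_0]$, though this does not affect the argument.
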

\begin{proof}
Let $\alpha \neq 0$ and denote by $M$ the operator $ \frac 1\alpha (J(\alpha)-J(0))$; notice that $M$ is independent of $\alpha$.
For every $u \in \Dom(J(0))$,
\begin{equation}\label{Jp.ub}
\begin{aligned}
\|\alpha M u\|^2 & = \sum_{n=1}^\infty 
\left(
|w_{2n} u_{2n+1}|^2 + |w_{2n} u_{2n}|^2
\right)
\leq
|\alpha|^2 \sum_{n=1}^\infty n^2 |u_n|^2.
\end{aligned}
\end{equation}
On the other hand, for every $u \in \Dom(J(0))$,
\begin{equation}
\begin{aligned}
\|J(0)u\|^2 & = \sum_{n=1}^\infty 
\left(
|w_{2n-1} u_{2n}|^2 + |w_{2n-1} u_{2n-1}|^2
\right)
\\&=
\sum_{n=1}^\infty 
\left(
(2n)^2 |u_{2n}|^2\left(1-\frac{1}{2n}\right)^2  + (2n-1)^2|u_{2n-1}|^2 
\right)\!,
\end{aligned}
\end{equation}
hence for every $\delta>0$, there exists $\tilde C(\delta)>0$ such that 
\begin{equation}\label{J0.lb}
\begin{aligned}
\|J(0)u\|^2  &\geq \left(1-\delta \right) \sum_{n=1}^\infty n^2 |u_n|^2  - \tilde C(\delta)  \|u\|^2.
\end{aligned}
\end{equation}
By putting \eqref{Jp.ub}, \eqref{J0.lb} together and using Young inequality, we obtain the statement (i).

Proof of the statement (ii): 
Notice that $J(0)=J(0)^*$, thus $\|(J(0)-z)^{-1}\|\leq \frac{1}{|\Im z|}$ and $\|J(0)(J(0)-z)^{-1}\| \leq \frac{|z|}{|\Im z|}$ 
for $z \notin \R$. Further, for any $z \notin \R$ and $u \in \ell^2(\N)$, we have from \eqref{J.alph.rb.1} that
\begin{equation}\label{M.bound}
\begin{aligned}
\|\alpha M (J(0)-z)^{-1}u\| &\leq (1+\varepsilon)|\alpha| \|J(0)(J(0)-z)^{-1} u\| + C(\varepsilon) \|(J(0)-z)^{-1} u\|
\\
& \leq \left( \frac{(1+\varepsilon)|\alpha||z|}{|\Im z|} + \frac{C(\varepsilon)}{|\Im z|}\right) \|u\|,
\end{aligned}
\end{equation}
where $\varepsilon>0$ is arbitrary. If $r<1$, then we can clearly select $\varepsilon >0$ such that $(1+\varepsilon)r<1$. Therefore there exists a non-empty open set $\mathcal{Z}_r \subset \C$ with $\mathcal Z_r \cap \R = \emptyset$ such that, for all $\alpha\in B_{r}(0)$ and all $z \in \mathcal Z_r$,
\begin{equation}
\|\alpha M (J(0)-z)^{-1}\| <1.
\end{equation}
Hence we have the standard representation of resolvent of $J(\alpha)$ based on Neumann series
\begin{equation}\label{Jres.repr}
\begin{aligned}
(J(\alpha)-z)^{-1} & = (J(0)-z)^{-1} (I + \alpha M(J(0)-z)^{-1}))^{-1} 
\\ &= (J(0)-z)^{-1} \sum_{n=0}^{\infty} (-\alpha)^n (M(J(0)-z)^{-1}))^n, 
\end{aligned}
\end{equation}
from which the analyticity in $\alpha$ follows.
\end{proof}

\section{The self-adjoint case}\label{sec:sa_case}

In this section, we analyze the spectral properties of $J(\alpha)$ for $0\leq\alpha\leq1$. Some of the following
results may be deduced from the properties of orthogonal polynomials studied in \cite[\S\S~7]{carlitz_dmj60}. Here
we provide an independent brief derivation based exclusively on techniques developed for spectral analysis of Jacobi operators,
see \cite{teschl00}.

\subsection{Preliminaries}

Let us start by the amazing formula 
\begin{equation}
 \int_{0}^{\infty}e^{-u}\cn(zu,\alpha)\mbox{d}u=
\dfrac{1}{1+\dfrac{z^{2}w_{1}^{2}\phantom{I}}{1+\dfrac{z^{2}w_{2}^{2}\phantom{I}}{1+\dfrac{z^{2}w_{3}^{2}\phantom{I}}{1+\ldots}}}}
\label{eq:S-frac}
\end{equation}
which goes back to Stieltjes, see \cite{stieltjes93}. This identity is to be understood as the equality between two elements
of the ring of formal power series in the indeterminate $z$. The formal power series for the formal Laplace transform on the LHS
of \eqref{eq:S-frac} equals
\[
 \sum_{n=0}^{\infty}(-1)^{n}C_{2n}\left(\alpha^{2}\right)z^{2n},
\]
as one deduces with the aid of \eqref{eq:cn_tayl}. On the other hand, the coefficients of the power series associated with 
the Stieltjes continued fraction on the RHS of \eqref{eq:S-frac} is known to be expressible in terms of the first 
diagonal element of an integer power of the Jacobi matrix $\mathcal{J}(\alpha)$. This can be deduced, for example, from the 
Stieltjes' Expansion Theorem \cite[Thm.~53.1]{wall48}; see also \cite{flajolet_dm80} for more details.
Namely, the RHS of \eqref{eq:S-frac} equals
\[
 \sum_{n=0}^{\infty}(-1)^{n}\left(\mathcal{J}(\alpha)^{2n}\right)_{1,1}z^{2n}.
\]
In addition, since the diagonal of $\mathcal{J}(\alpha)$ vanishes, one has $\left(\mathcal{J}(\alpha)^{2n+1}\right)_{1,1}=0$ for all $n \in \N_0$.

Consequently, formula \eqref{eq:S-frac} yields identities
\begin{equation}
 \langle e_{1}, J(\alpha)^{2n+1}e_{1}\rangle=0, \quad \quad \langle e_{1}, J(\alpha)^{2n}e_{1}\rangle=C_{2n}\left(\alpha^{2}\right), \quad n\in \N_0, \ \alpha\in\C.
 \label{eq:powJ_eq_C}
 \end{equation}
 
\subsection{Spectrum in the case $0\leq\alpha\leq1$}

According to equalities \eqref{eq:cn_tayl} and \eqref{eq:powJ_eq_C}, 
function $\cn(z,\alpha)$ can be written as
\[
 \cn(z,\alpha)=\sum_{n=0}^{\infty}\frac{(-1)^{n}}{(2n)!}z^{2n}\langle e_1, J(\alpha)^{2n}e_1\rangle.
\]
Define $\mu(\cdot):=\langle e_{1}, E_{J}(\cdot)e_{1}\rangle$ where $E_{J}$ stands for the spectral 
measure of the self-adjoint operator $J(\alpha)$. 
Then, by the Spectral theorem, we have
\[
  \cn(z,\alpha)=\sum_{n=0}^{\infty}\frac{(-1)^{n}}{(2n)!}z^{2n}\int_{\mathbb{R}}x^{2n}\mbox{d}\mu(x).
\]
Since the power series of $\cn(z,\alpha)$ converges absolutely for $|z|<\pi/2$, Fubini's theorem
justifies the interchange of the sum and the integral and, with the help of \eqref{eq:powJ_eq_C}, we get
\begin{equation}
 \int_{\mathbb{R}}e^{ixz}\mathrm{d}\mu(x)=\cn(z,\alpha),
\label{eq:expizJ}
\end{equation}
which is true in the circle $|z|<\pi/2$. Nevertheless, since 
\[
 \int_{\mathbb{R}}e^{a|x|}\mathrm{d}\mu(x)<\infty
\]
for all $0<a<\pi/2$, the LHS of \eqref{eq:expizJ} is a function analytic in the strip $|\Im z|<\pi/2$
and formula \eqref{eq:expizJ} remains true for all $z\in\C$, $|\Im z|<\pi/2$. One can show that the
largest strip where \eqref{eq:expizJ} holds is in fact $|\Im z|<K'(\alpha)$; here $K'(\alpha)$ is the conjugate elliptic integral, see Appendix~\ref{app:elliptic}.

The LHS of \eqref{eq:expizJ} is nothing but the Fourier transform of $\mu$, i.e., $\mathcal{F}[\mu](z)=\cn(z,\alpha)$,
where the measure $\mu$ is identified with the corresponding tempered distribution.
Consequently, by the inverse Fourier transform to the function $\cn(z,\alpha)$, 
we recover the spectral measure $\mu$.

Recall that, in the distributional sense, one has
\[
 \mathcal{F}^{-1}\left[\cos(ax)\right](t)=\frac{1}{2}\left(\delta(t-a)+\delta(t+a)\right), \quad a, t\in\R.
\]
Hence, taking into account the expansion \eqref{eq:cn_Four_ser}, one computes
\[
\mu(t)=\frac{\pi}{\alpha K(\alpha)}\sum_{n=0}^{\infty}\frac{q^{n+1/2}}{1+q^{2n+1}}\left[\delta\left(t-\frac{(2n+1)\pi }{2K(\alpha)}\right)+\delta\left(t+\frac{(2n+1)\pi }{2K(\alpha)}\right)\right], \ \ t \in \R, \ \alpha \in (0,1),
\]
where $K(\alpha)$ is the complete elliptic integral of the first kind and $q$ is the nome, see Appendix~\ref{app:elliptic}.
The measure $\mu$ coincides with the measure of orthogonality of polynomials studied by Carlitz and the above
formula is in agreement with results of \cite[\S\S~7]{carlitz_dmj60}. Since the support of the measure $\mu$ coincide
with the spectrum of $J(\alpha)$, we get
\[
 \sigma\left(J(\alpha)\right)=\frac{\pi}{2K(\alpha)}\left(2\mathbb{Z}+1\right), \quad \alpha \in [0,1);
\]
the special case $\alpha=0$, for which $K(0)=\pi/2$, can be verified directly.

If $\alpha=1$, then $\cn(z,1)=1/\cosh(z)$. Recall that
\[
  \mathcal{F}^{-1}\left[\frac{1}{\cosh(x)}\right](t)=\frac{1}{2\cosh\left(\pi t/2\right)}, \quad t \in \R.
\]
Thus, starting at the formula \eqref{eq:expizJ}, one concludes that the measure $\mu$ is absolutely 
continuous and its density equals
\begin{equation}
 \frac{\mbox{d}\mu}{\mbox{d}t}=\frac{1}{2\cosh\left(\pi t/2\right)}, \quad t\in\mathbb{R}.
\label{eq:mu_density_alpha=1} 
\end{equation}

We summarize the obtained results in the following proposition.

\begin{prop}\label{prop:sa_spec}
 We have
 \[
  \sigma\left(J(\alpha)\right)=\begin{cases} \displaystyle
                               \frac{\pi}{2K(\alpha)}\left(2\mathbb{Z}+1\right), & \quad  |\alpha|<1,\\[2mm]
                               \mathbb{R}, & \quad |\alpha|=1.
                              \end{cases}
 \]
\end{prop}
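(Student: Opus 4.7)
The plan is to prove the proposition by identifying the spectral measure $\mu=\langle e_{1},E_{J}(\cdot)e_{1}\rangle$ explicitly via Fourier inversion, and then recovering the spectrum from $\supp \mu$ using cyclicity of $e_{1}$ for a self-adjoint Jacobi operator with non-vanishing off-diagonal. Most of the ingredients are already set up in the preamble to the proposition, so the proof largely amounts to organizing them, handling convergence, and discussing the two cases separately.

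First, I would observe that by the spectral theorem combined with the moment identities \eqref{eq:powJ_eq_C} and the Taylor series of $\cn(z,\alpha)$, one has
\begin{equation*}
\cn(z,\alpha)=\sum_{n=0}^{\infty}\frac{(-1)^{n}}{(2n)!}z^{2n}\int_{\R}x^{2n}\,\dd\mu(x)
\end{equation*}
for $|z|<\pi/2$. The interchange of sum and integral is legitimate because $\sum (|z|x)^{2n}/(2n)!=\cosh(|z|x)$ is $\mu$-integrable on $\R$ for $|z|<\pi/2$; an appeal to Fubini then yields \eqref{eq:expizJ} on that disk, and the left-hand side extends analytically to the strip $|\Im z|<K'(\alpha)$ (in the case $\alpha\in[0,1)$) by the same integrability argument applied to $e^{|\Im z||x|}$, so that \eqref{eq:expizJ} holds there by uniqueness. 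In other words, $\mathcal F[\mu]=\cn(\cdot,\alpha)$, and $\mu$ is recovered by taking the inverse (tempered) Fourier transform of $\cn(\cdot,\alpha)$.

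Next, I would split into the two cases. For $\alpha\in[0,1)$, the Fourier series expansion \eqref{eq:cn_Four_ser} expresses $\cn(\cdot,\alpha)$ as a uniformly convergent cosine series in the frequencies $(2n+1)\pi/(2K(\alpha))$, so termwise inverse Fourier transformation (justified by tempered distribution arguments) gives
\begin{equation*}
\mu=\frac{\pi}{\alpha K(\alpha)}\sum_{n=0}^{\infty}\frac{q^{n+1/2}}{1+q^{2n+1}}\Bigl[\delta_{(2n+1)\pi/(2K(\alpha))}+\delta_{-(2n+1)\pi/(2K(\alpha))}\Bigr],
\end{equation*}
a pure-point measure whose support is exactly $\frac{\pi}{2K(\alpha)}(2\Z+1)$; the case $\alpha=0$ is checked directly, noting $K(0)=\pi/2$. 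For $\alpha=1$, the identity $\cn(z,1)=1/\cosh z$ combined with the elementary Fourier pair $\mathcal F^{-1}[\operatorname{sech}](t)=\frac12\operatorname{sech}(\pi t/2)$ shows that $\mu$ is absolutely continuous with density \eqref{eq:mu_density_alpha=1}, hence $\supp\mu=\R$.

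Finally, I would close the argument by noting that for a self-adjoint Jacobi operator with strictly positive off-diagonal weights $w_n$, the vector $e_1$ is cyclic (the span of $\{J(\alpha)^n e_1:n\in\N_0\}$ coincides with the span of the standard basis, as a straightforward induction on the difference equation shows). Consequently $\sigma(J(\alpha))=\supp\mu$, and the two cases above give the claimed formula. The only step where I expect anything subtle is the passage from the identity on a small disk to the full strip $|\Im z|<K'(\alpha)$ together with the termwise inverse Fourier transformation in the discrete case; both, however, reduce to standard tempered-distribution arguments once the exponential moment bound on $\mu$ has been established.
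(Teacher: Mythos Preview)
Your proposal is correct and follows essentially the same approach as the paper: the paper derives \eqref{eq:expizJ} from the moment identities \eqref{eq:powJ_eq_C}, inverts the Fourier transform of $\mu$ using the Fourier series \eqref{eq:cn_Four_ser} for $0<\alpha<1$ and the identity $\cn(z,1)=\operatorname{sech} z$ for $\alpha=1$, and then reads off the spectrum as $\supp\mu$ (with $\alpha=0$ checked directly). Your explicit mention of the cyclicity of $e_{1}$ to pass from $\supp\mu$ to $\sigma(J(\alpha))$ is a welcome clarification that the paper leaves implicit; note only that ``strictly positive'' should be ``nonzero'' to cover $\alpha\in(-1,0)$, though this does not affect the cyclicity argument.
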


We remark that the spectral decomposition of $J(1)$ can be derived with the aid of the special case of Meixner-Pollaczek polynomials
 \cite[Sec.~9.7]{koekoek10}
\[
 M_{n}(x)=\ii^{n}n!\,_{2}F_{1}\left(-n,\frac{1+\ii x}{2};1,2\right)\!, \quad n \in \N_0, \ x \in \R.
\]
These polynomials satisfy the recurrence
$$M_{n+1}(x)=xM_{n}(x)-n^{2}M_{n-1}(x), \quad  n \in \N,$$
with initial conditions $M_{0}(x)=1$ and $M_{1}(x)=x$. Their orthogonality relation reads
\[
 \int_{\mathbb{R}}M_{m}(x)M_{n}(x)\frac{\mbox{d}x}{\cosh(\pi x/2)}=2(n!)^{2}\delta_{m,n}, \quad m,n \in \N_0.
\]
Thus, if we set 
\[
 \phi_{n}(x)=\frac{1}{(n-1)!}\left(2\cosh\left(\frac{\pi x}{2}\right)\right)^{-\frac 12}M_{n-1}(x), \quad n \in \N, \ x \in \R,
\]
and $\varphi_{x}^{T}=\left(\phi_{1}(x),\phi_{2}(x),\dots\right)$, then $J(1)\varphi_{x}=x\varphi_{x}$ for all $x\in\R$
and
\[
 \int_{\mathbb{R}}\phi_{m}(x)\phi_{n}(x)\mbox{d}x=\delta_{m,n}.
\]
Consequently, one can introduce the unitary mapping $U:\ell^{2}(\mathbb{N})\to L^{2}(\mathbb{R},\mbox{d}x)$ by setting $Ue_{n}:=\phi_{n}$, for all $n\in\mathbb{N}$.
Then, clearly
\[
 (U\psi)(x)=\langle\varphi_{x},\psi\rangle_{\ell^{2}}=\sum_{n=1}^{\infty}\phi_{n}(x)\psi_{n}, \quad
\quad
 U^{-1}f=\langle\varphi_{x},f\rangle_{L^{2}}=\int_{\mathbb{R}}f(x)\varphi_{x}\dd x.
\]
Finally, for $x \in \R$, one easily verifies
\[
 UJ(1)U^{-1}\phi_{1}(x)=\phi_{2}(x)=x\phi_{1}(x),
\]
and
\[
 UJ(1)U^{-1}\phi_{n}(x)=(n-1)\phi_{n-1}(x)+n\phi_{n+1}(x)=x\phi_{n}(x), \quad n\geq2.
\]
Thus, $J(1)$ is unitarily equivalent to the multiplication operator by the independent variable acting on $L^{2}(\mathbb{R},\mbox{d}x)$. 
Consequently, we have again $\sigma(J(1))=\sigma_{ac}(J(1))=\mathbb{R}$.

\subsection{Weyl $m$-function in the case $0<\alpha\leq1$}

Recall the Weyl $m$-function of Jacobi operator $J(\alpha)$ is defined as
\begin{equation}
 m(z,\alpha)=\langle e_1,(J(\alpha)-z)^{-1}e_1\rangle, \quad z\in\C, \ \Im z\neq0.
 \label{eq:def_m_fction}
\end{equation}
We derive an explicit formula for the $m$-function, in fact its Mittag-Leffler expansion. The proof  relies on the identity \eqref{eq:expizJ} and the well-known relation 
for a self-adjoint operator $A$ in a Hilbert space $\mathcal H$, see e.g. \cite[Eq.~(VIII.9)]{reed1} or 
\cite[Chp.~5,~Prob.~31.(b)]{blank08}, namely, for all $f\in\mathcal{H}$,
 \begin{equation}
(A-z)^{-1}f=\pm\ii\int_{0}^{\infty}e^{\pm\ii zt}e^{\mp\mathrm{i}At}f\mbox{d}t, \quad  \Im z\gtrless0.
 \label{eq:resolv_eq_Laplace_exp}
 \end{equation}

\begin{prop}
Let the function $m(z,\alpha)$ be as in \eqref{eq:def_m_fction}. If $0<\alpha<1$, then we have
\begin{equation}
m(z,\alpha)=-\frac{\pi}{\alpha K(\alpha)}\sum_{n=-\infty}^{\infty}\frac{q^{n+1/2}}{1+q^{2n+1}} 
	\frac{1}{z-\frac{(2n+1)\pi}{2K(\alpha)}}, \quad \Im z \neq 0, 
\label{eq:m_fction_0<alpha<1}
\end{equation}
and if $\alpha=1$, then we have
 \begin{equation}
 m(z,1)=\pm\frac{\ii}{2}\left(\psi\left(\frac{3}{4}\mp\frac{\ii z}{4}\right)-\psi\left(\frac{1}{4}\mp\frac{\ii z}{4}\right)\right), \quad \Im z \gtrless 0,
 \label{eq:m_fction_alpha=1} 
 \end{equation}
 where $\psi$ stands for the digamma function.
\end{prop}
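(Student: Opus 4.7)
The plan is to take the two hints the paper has already placed on the table, namely \eqref{eq:resolv_eq_Laplace_exp} and \eqref{eq:expizJ}, and combine them. For $\Im z>0$ I would apply \eqref{eq:resolv_eq_Laplace_exp} with $A=J(\alpha)$, $f=e_{1}$, and pair with $e_{1}$ in the inner product. Using the spectral theorem together with \eqref{eq:expizJ} the scalar function $\langle e_{1},e^{-\ii J(\alpha)t}e_{1}\rangle$ equals $\cn(-t,\alpha)=\cn(t,\alpha)$ because $\cn(\cdot,\alpha)$ is even. Hence, as a starting identity,
\begin{equation}
m(z,\alpha)=\ii\int_{0}^{\infty}e^{\ii zt}\cn(t,\alpha)\dd t,\qquad \Im z>0,
\end{equation}
and the analogous formula with $(-\ii,-\ii z)$ in place of $(\ii,\ii z)$ covers $\Im z<0$. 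Everything then reduces to computing this Laplace-type transform of $\cn(t,\alpha)$ separately in the two cases.

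For $0<\alpha<1$ I would insert the Fourier series \eqref{eq:cn_Four_ser} of $\cn(t,\alpha)$ into the integral. Writing $a_{n}:=(2n+1)\pi/(2K(\alpha))$ and using $\int_{0}^{\infty}e^{\ii zt}\cos(a_{n}t)\dd t=\tfrac{\ii}{2}\bigl(\tfrac{1}{z-a_{n}}+\tfrac{1}{z+a_{n}}\bigr)$ for $\Im z>0$, a term-by-term integration gives
\begin{equation}
m(z,\alpha)=-\frac{\pi}{\alpha K(\alpha)}\sum_{n=0}^{\infty}\frac{q^{n+1/2}}{1+q^{2n+1}}\left(\frac{1}{z-a_{n}}+\frac{1}{z+a_{n}}\right)\!.
\end{equation}
The exchange of sum and integral is justified by $|q|<1$, which gives absolute geometric convergence of $\sum q^{n+1/2}/(1+q^{2n+1})$, and dominated convergence on $[0,\infty)$ using $|e^{\ii zt}|=e^{-\Im z\,t}$. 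Finally I would re-index the $1/(z+a_{n})$ terms by $n\mapsto -n-1$; the simple check $q^{(-n-1)+1/2}/(1+q^{2(-n-1)+1})=q^{n+1/2}/(1+q^{2n+1})$ shows the coefficients match, so the two sums glue into the doubly-infinite sum \eqref{eq:m_fction_0<alpha<1}. For $\Im z<0$ the same calculation with the lower sign yields the identical expression, so there is nothing extra to check there.

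The case $\alpha=1$ uses $\cn(t,1)=\sech(t)$, expanded for $t>0$ as $\sech(t)=2\sum_{n=0}^{\infty}(-1)^{n}e^{-(2n+1)t}$. Geometric convergence for $t\ge\eta>0$ and boundedness of $\sech$ near $0$ again justify term-by-term integration against $e^{\ii zt}$ with $\Im z>0$, and
\begin{equation}
m(z,1)=2\ii\sum_{n=0}^{\infty}\frac{(-1)^{n}}{2n+1-\ii z}=\frac{\ii}{2}\sum_{n=0}^{\infty}\frac{(-1)^{n}}{n+\frac{1-\ii z}{2}}.
\end{equation}
Then I would invoke the standard identity $\sum_{n\ge 0}(-1)^{n}/(n+x)=\tfrac{1}{2}\bigl(\psi(\tfrac{x+1}{2})-\psi(\tfrac{x}{2})\bigr)$ with $x=(1-\ii z)/2$ to reach
\begin{equation}
m(z,1)=\frac{\ii}{2}\left(\psi\!\left(\tfrac{3}{4}-\tfrac{\ii z}{4}\right)-\psi\!\left(\tfrac{1}{4}-\tfrac{\ii z}{4}\right)\right)\!,
\end{equation}
which is \eqref{eq:m_fction_alpha=1} for the upper half-plane; the lower half-plane is handled by repeating the argument with the opposite sign in \eqref{eq:resolv_eq_Laplace_exp}.

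The only real obstacle is purely bookkeeping: verifying the term-by-term integration, and executing the index shift in the $0<\alpha<1$ case so that the alternating-sign pairs $(z\pm a_{n})^{-1}$ fuse into the two-sided Mittag-Leffler form. The $\alpha=1$ step then rests on a classical identity for the alternating series in terms of the digamma function, which can be quoted directly.
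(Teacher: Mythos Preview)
Your proof is correct and follows essentially the same route as the paper: both start from the Laplace representation $m(z,\alpha)=\ii\int_0^\infty e^{\ii zt}\cn(t,\alpha)\,\dd t$ obtained from \eqref{eq:resolv_eq_Laplace_exp} and \eqref{eq:expizJ}, then insert the Fourier series \eqref{eq:cn_Four_ser} and integrate term by term (the paper phrases this via $\mathcal{L}[\cos(at)]$ and the substitution $z\mapsto\ii z$); for $\alpha=1$ the paper simply quotes the tabulated Laplace transform of $\sech$ whereas you derive it via the geometric expansion, which is a harmless variation. One small slip: in your displayed chain for $\alpha=1$ the coefficient should be $\ii$ rather than $\tfrac{\ii}{2}$ in front of $\sum_{n\ge 0}(-1)^n/(n+\tfrac{1-\ii z}{2})$, though your final formula is correct.
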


\begin{proof}
It follows from \eqref{eq:def_m_fction} and the self-adjointness of $J(\alpha)$ that
$
\overline{m(\bar{z},\alpha)}=m(z,\alpha),
$
thus it suffices to consider $z$ with $\Im z>0$. From \eqref{eq:resolv_eq_Laplace_exp} and \eqref{eq:expizJ}, one gets
\begin{equation}
 m(\ii z,\alpha)=\ii\mathcal{L}[\cn(t,\alpha)](z), \quad \Re z>0, \ \alpha \in (0,1],
\label{eq:m_eq_Laplace}
\end{equation}
where $\mathcal{L}$ denotes the Laplace transform.

Let $0<\alpha<1$. Recalling \eqref{eq:cn_Four_ser} together with the elementary formula
\[
 \mathcal{L}[\cos(at)](z)=\frac{z}{a^{2}+z^{2}}, \quad a\in\R, \ \Re z>0,
\]
one computes
\[
 \mathcal{L}[\cn(t,\alpha)](z)=\frac{2\pi z}{\alpha K(\alpha)}\sum_{n=0}^{\infty}\frac{q^{n+1/2}}{1+q^{2n+1}}
 \frac{1}{z^{2}+\frac{(2n+1)^{2}\pi^{2}}{4K(\alpha)^{2}}}, \quad \Re z>0, \ \alpha \in (0,1).
\]
Formula \eqref{eq:m_fction_0<alpha<1} now follows from \eqref{eq:m_eq_Laplace} and the identity above.

If $\alpha=1$, then we have 
\[
 \mathcal{L}\left[\frac{1}{\cosh(t)}\right](z)=\frac{1}{2}\left(\psi\left(\frac{z+3}{4}\right)-\psi\left(\frac{z+1}{4}\right)\right),
 \quad \Re z>-1,
\]
see \cite[Eq.~3.541(6)]{gradshteyn00}, and similarly, by \eqref{eq:m_eq_Laplace}, one arrives at \eqref{eq:m_fction_alpha=1}.
\end{proof}

\begin{rem}
 By the Stieltjes inversion formula, the density of the absolutely continuous part of the spectral measure $\mu$ 
 can be recovered from the Weyl $m$-function as the limit
 \[
  \frac{1}{\pi}\lim_{\epsilon\to0+}\Im m(x+\mathrm{i}\epsilon),
 \]
 see \cite[Chp.~2]{teschl00}. In the case $\alpha=1$, one can compute the limit explicitly and
 re-prove \eqref{eq:mu_density_alpha=1}. Indeed, with the aid of formula \cite[Eq.~6.3.16]{abramowitz64}
 \[
  \psi(z)=-\gamma+\sum_{n=0}^{\infty}\left(\frac{1}{n+1}-\frac{1}{n+z}\right), \quad z\neq0,-1,-2,\dots,
 \]
and \eqref{eq:m_fction_alpha=1}, one obtains
 \[
  \frac{1}{\pi}\lim_{\epsilon\to0+}\Im m(x+\mathrm{i}\epsilon,1)=\frac{2}{\pi}\sum_{n=0}^{\infty}(-1)^{n}\frac{2n+1}{(2n+1)^{2}+x^{2}}
  =\frac{1}{2\cosh\left(\pi x/2\right)},
 \]
 as expected. The last equality is the Mittag-Leffler expansion of the hyperbolic secant.
\end{rem}

\section{The non-self-adjoint case}\label{sec:nsa_case}

\subsection{The case $|\alpha|<1$}

First we extend the formula \eqref{eq:m_fction_0<alpha<1} for the Weyl $m$-function for $|\alpha|<1$.

\begin{prop}\label{prop:m-func_|alpha|<1}
 The formula \eqref{eq:m_fction_0<alpha<1} for the Weyl $m$-function of $J(\alpha)$ remains valid for $0<|\alpha|<1$
 and $z\in\rho(J(\alpha))$.
\end{prop}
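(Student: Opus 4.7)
The plan is to analytically continue the identity \eqref{eq:m_fction_0<alpha<1}, already proved for $\alpha\in(0,1)$, first in the parameter $\alpha$ and then in the spectral parameter $z$.

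Fix $r\in(0,1)$ and let $\mathcal Z_r\subset\C\setminus\R$ be the open set furnished by Lemma~\ref{lem:pert}(ii). For every $z\in\mathcal Z_r$, the map $\alpha\mapsto(J(\alpha)-z)^{-1}$ is holomorphic on $B_r(0)$, so the left-hand side $\alpha\mapsto m(z,\alpha)$ is holomorphic on $B_r(0)$. On the right-hand side of \eqref{eq:m_fction_0<alpha<1}, both $K(\alpha)$ and the nome $q(\alpha)$ are holomorphic functions of $\alpha^{2}$ on $B_1(0)$ with $q(\alpha)\sim\alpha^{2}/16$ as $\alpha\to 0$ and $|q(\alpha)|<1$ uniformly on compact subsets. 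Splitting the bilateral series into indices $n\geq 0$ and $n\leq-1$ and using the symmetry $q^{n+1/2}/(1+q^{2n+1})=q^{-n-1/2}/(1+q^{-2n-1})$, I would bound each half by a geometric series in $|q|$ to obtain absolute, locally uniform convergence in $\alpha$. The prefactor $1/\alpha$ is absorbed by $q^{1/2}\sim\alpha/4$, making the origin a removable singularity; for a generic $z\in\mathcal Z_r$ (avoiding the discrete $\alpha$-set in $B_r(0)$ where the denominators $z-(2n+1)\pi/(2K(\alpha))$ vanish) the RHS is then holomorphic in $\alpha$ on $B_r(0)$.

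With both sides holomorphic in $\alpha$ on $B_r(0)$ and coinciding on the real segment $(0,r)$ by the self-adjoint version, the identity theorem forces them to agree on all of $B_r(0)$, and since $r\in(0,1)$ is arbitrary, \eqref{eq:m_fction_0<alpha<1} holds for every $\alpha$ with $0<|\alpha|<1$ and every such $z$. To extend in $z$, fix such an $\alpha$: by Proposition~\ref{prop:J_inv}(i), $J(\alpha)$ has compact resolvent, hence $\sigma(J(\alpha))$ is countable and discrete and $\rho(J(\alpha))$ is open and connected. Both sides of \eqref{eq:m_fction_0<alpha<1} are meromorphic in $z$ on $\C$ with poles confined to $\sigma(J(\alpha))$, and they already agree on a dense open subset of $\mathcal Z_r\subset\rho(J(\alpha))$; analytic continuation then yields the identity throughout $\rho(J(\alpha))$. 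The main technical obstacle is the joint holomorphy and uniform convergence of the right-hand side in $\alpha$: one must justify the holomorphic extension of $K(\alpha)$ and $q(\alpha)$ to the complex disk, the removability of the apparent singularity at $\alpha=0$ (via $q^{1/2}\sim\alpha/4$), and the choice of $z$ avoiding the $\alpha$-dependent poles of individual summands.
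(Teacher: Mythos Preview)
Your proposal is correct and follows essentially the same two-step analytic continuation as the paper: first in $\alpha$ via Lemma~\ref{lem:pert}(ii) and the identity theorem on the real segment $(0,r)$, then in $z$ by meromorphy. One small imprecision: in the $z$-extension you assert that the right-hand side has ``poles confined to $\sigma(J(\alpha))$'' before this is known; the paper's phrasing is cleaner here---both sides are meromorphic on $\C$ and agree on the open set $\mathcal Z_r$, hence agree as meromorphic functions, and the inclusion of the right-hand side's poles in $\sigma(J(\alpha))$ then follows \emph{a posteriori} from the holomorphy of $m(\cdot,\alpha)$ on $\rho(J(\alpha))$.
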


\begin{proof}
 Let us temporarily denote the RHS of \eqref{eq:m_fction_0<alpha<1} by $\mathrm{ML}(z,\alpha)$.
 Recall first that $K$ and $q$ are analytic non-constant functions on the set $\mathbb{C}\setminus\left((-\infty,-1]\cup[1,\infty)\right)$,
 which is proved within the theory of elliptic functions, see \cite[Chp.~7,\S~8]{chandrasekharan85} and also \cite[Sec.~4]{walker_rslps03}.  Moreover, $|q|<1$ for all $\alpha\in\mathbb{C}\setminus\left((-\infty,-1]\cup[1,\infty)\right)$.
 Consequently, $\mathrm{ML}(z,\cdot)$ is a meromorphic function on $\mathbb{C}\setminus\left((-\infty,-1]\cup[1,\infty)\right)$.
 
 Take $r \in (0,1)$. By Lemma~\ref{lem:pert}.(ii), we known that there exists a non-empty open set $\mathcal{Z}_{r}\subset\C$ such that the function $\alpha\mapsto(J(\alpha)-z)^{-1}$ is analytic on $B_{r}(0)$ for all $z\in\mathcal{Z}_{r}$. Hence the same holds true for the $m$-function $m(z,\cdot)$. Since, for $z\in\mathcal{Z}_{r}$, the equality $m(z,\alpha)=\mathrm{ML}(z,\alpha)$, 
 i.e.~an equality between two meromorphic functions in $\alpha$, holds true for all $\alpha\in(0,1)$, it has to remain valid for all $\alpha\in B_r(0)$. Hence, $m(z,\alpha)=\mathrm{ML}(z,\alpha)$ for all $\alpha\in B_r(0)$ and all $z\in\mathcal{Z}_{r}$.
 
 At the same time, both functions $m(\cdot,\alpha)$ and $\mathrm{ML}(\cdot,\alpha)$  are analytic in the set 
 $\rho(J(\alpha))\cap\big(\mathbb{C}\setminus\frac{\pi}{2K(\alpha)}\left(2\mathbb{Z}+1\right)\big)$.
 So the equality $m(z,\alpha)=\mathrm{ML}(z,\alpha)$ remains true on this domain for $z$, by the analyticity argument in $z$.
 
 Notice that function $m(\cdot,\alpha)$ has singularities at points $z\in\frac{\pi}{2K(\alpha)}\left(2\mathbb{Z}+1\right)$. Thus, since the $m$-function is analytic on the
 resolvent set, we have $\frac{\pi}{2K(\alpha)}\left(2\mathbb{Z}+1\right)\subset\sigma(J(\alpha))$. All in all, we get the equality $m(z,\alpha)=\mathrm{ML}(z,\alpha)$ for all $\alpha\in B_r(0)$ and all $z\in\rho(J(\alpha))$.
 Since $r$ is an arbitrary number smaller than $1$, the last claim can be extended to all $\alpha$ with $|\alpha|<1$.
\end{proof}

 Recall that in the case of Jacobi operators with compact resolvent (non-self-adjoint in general), 
 the Weyl $m$-function is a meromorphic function and the set of its poles coincides with the spectrum of
 the Jacobi operator. In addition, the algebraic multiplicity of an eigenvalue is the same as the order
 of the pole, see \cite[Thm.~5.3, Cor.~5.5]{beckermann_jat99}.
 The following statement follows immediately from Proposition \ref{prop:m-func_|alpha|<1} and the formula 
 \eqref{eq:m_fction_0<alpha<1}. The claim holds true also for $\alpha=0$, as one readily verifies recalling that $K(0)=\pi/2$.

\begin{thm}\label{thm:spec_J}
 Let $|\alpha|<1$, then
  \[
  \sigma\left(J(\alpha)\right)=\sigma_{p}\left(J(\alpha)\right)=\{\lambda_N\}_{N \in \Z },
  \]
where 
  \begin{equation}\label{eq:EV.def}
  \lambda_N = \frac{\pi}{2K(\alpha)}(2N+1), \quad N \in \Z.
  \end{equation}
In addition, all $\lambda_N$ are simple, i.e. have the algebraic multiplicity equal to one. In particular, if $\Re\alpha=0$, then $\sigma\left(J(\alpha)\right) \subset \R$.
\end{thm}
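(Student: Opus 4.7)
The strategy is to combine three pieces already established: the compactness of the resolvent from Proposition~\ref{prop:J_inv}.(i), the meromorphic correspondence between the Weyl $m$-function and the spectral data of a Jacobi operator with compact resolvent recalled just before the theorem, and the explicit Mittag--Leffler expansion from Proposition~\ref{prop:m-func_|alpha|<1}.

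I would first observe that Proposition~\ref{prop:J_inv}.(i) yields $J(\alpha)^{-1}$ Hilbert--Schmidt for $|\alpha|<1$, so the spectrum of $J(\alpha)$ is purely discrete, $\sigma(J(\alpha))=\sigma_p(J(\alpha))$, and every eigenvalue has finite algebraic multiplicity. By the Beckermann-type result cited in the excerpt, the Weyl $m$-function $m(\cdot,\alpha)$ is then meromorphic on all of $\C$, its pole set coincides with $\sigma(J(\alpha))$, and the order of each pole equals the algebraic multiplicity of the corresponding eigenvalue. It therefore suffices to locate the poles of $m(\cdot,\alpha)$ and determine their orders.

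Formula \eqref{eq:m_fction_0<alpha<1} makes this immediate. The terms of the series give simple poles at the points $\lambda_N=\frac{\pi(2N+1)}{2K(\alpha)}$, $N\in\Z$, which are pairwise distinct because $K(\alpha)$ is a fixed finite nonzero complex number. The residue at $\lambda_N$ is contributed only by the $N$-th summand and equals
\[
-\frac{\pi}{\alpha K(\alpha)}\,\frac{q^{N+1/2}}{1+q^{2N+1}},
\]
which is nonzero since $\alpha\neq 0$ and $0<|q|<1$. Hence each $\lambda_N$ is a genuine simple pole of $m(\cdot,\alpha)$ and therefore a simple eigenvalue of $J(\alpha)$. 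The degenerate case $\alpha=0$ is handled separately using $K(0)=\pi/2$.

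For the reality statement, if $\Re\alpha=0$ I would write $\alpha=\ii\beta$ with $\beta\in(-1,1)$, so that $\alpha^2=-\beta^2\leq 0$. The standard integral representation
\[
K(\alpha)=\int_{0}^{\pi/2}\frac{\dd\theta}{\sqrt{1-\alpha^{2}\sin^{2}\theta}}
\]
then has a manifestly real positive integrand, giving $K(\ii\beta)\in\R$ and hence $\lambda_N\in\R$ for every $N$. I do not expect any of the steps to present a serious obstacle; the only point that warrants explicit attention is the nonvanishing of the residues, and this is essentially immediate from $0<|q|<1$.
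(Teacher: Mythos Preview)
Your proposal is correct and follows exactly the paper's route: compact resolvent from Proposition~\ref{prop:J_inv}(i), the Beckermann correspondence between poles of $m$ and eigenvalues (including multiplicities), and the explicit Mittag--Leffler formula~\eqref{eq:m_fction_0<alpha<1} from Proposition~\ref{prop:m-func_|alpha|<1}, with $\alpha=0$ treated separately. You in fact spell out two details the paper leaves implicit---the nonvanishing of the residues via $0<|q|<1$, and the reality of $K(\ii\beta)$ from the integral representation---so nothing is missing.
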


\subsection{Eigenvectors and their asymptotics}

Define 
\begin{equation}\label{eq:C_k.D_k_defs}
\begin{aligned}
C_{k}=C_{k}(z,\alpha)&:=\int_{0}^{2K(\alpha)}e^{-zt}\cn(t,\alpha)\sn^{k}(t,\alpha)\, \dd t,
\\
 D_{k}=D_{k}(z,\alpha)&:=\int_{0}^{2K(\alpha)}e^{-zt}\dn(t,\alpha)\sn^{k}(t,\alpha)\, \dd t, \quad k\in\N_0.
\end{aligned}
\end{equation}
The integration is carried out through the line segment in $\C$ connecting points $0$ and $2K(\alpha)$.
Integrals in \eqref{eq:C_k.D_k_defs} are well defined for any 
$\alpha\in\C\setminus\left((-\infty,-1]\cup[1,\infty)\right)$ since functions 
$\sn(uK(\alpha),\alpha)$, $\cn(uK(\alpha),\alpha)$ and $\dn(uK(\alpha),\alpha)$ 
are analytic in $u\in\R$. For our purposes, it is sufficient to restrict $\alpha$
on the unit disk $|\alpha|\leq1$ and exclude the boundary points $\alpha=\pm1$.

Before we proceed with the further spectral analysis of $J(\alpha)$, we investigate the asymptotic behavior of $C_{k}$ and $D_{k}$ as $k\to\infty$. The integral form of the definition formulas \eqref{eq:C_k.D_k_defs} is suitable for the application of the saddle point method. However, some knowledge on values $|\sn(u K(\alpha),\alpha)|$ for $u\in[0,2]$ and $|\alpha| \leq1$ is necessary. The needed property is stated in the following lemma, proved in a separate paper \cite{ss15} devoted entirely to properties of function 
$\alpha\mapsto\sn(uK(\alpha),\alpha)$ for complex $\alpha$.

\begin{lem}[\cite{ss15}]\label{lem:min}
	Let $|\alpha|\leq1$ and $\alpha\neq\pm1$, then 
	\begin{equation}\label{eq:sn.max}
	\left|\sn\left(u K(\alpha),\alpha\right)\right| < 1, \quad \forall u \in [0,1).
	\end{equation}
Hence, taking into account that $\sn(K(\alpha),\alpha)=1$ and $\sn(2K(\alpha)-z,\alpha)=\sn(z,\alpha)$, the function $ u \mapsto \left|\sn\left(u K(\alpha),\alpha\right)\right|$, 
restricted to the interval $(0,2)$, has the unique global maximum at $u=1$ with the value equal to 1.
\end{lem}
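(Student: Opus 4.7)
I would prove the lemma in two stages: first settle the real-modulus case by a direct monotonicity argument, then extend to complex $\alpha$ in the closed unit disk via the maximum modulus principle, using a transformation of the modulus to reduce the boundary estimate back to the real case.

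For $\alpha\in[0,1)$, the function $t\mapsto\sn(t,\alpha)$ is a classical real-analytic object: it is strictly increasing on $[0,K(\alpha)]$, with $\sn(0,\alpha)=0$ and $\sn(K(\alpha),\alpha)=1$. (For $\alpha=0$ this is just $\sn(t,0)=\sin t$ and $K(0)=\pi/2$.) Consequently $u\mapsto\sn(uK(\alpha),\alpha)$ maps $[0,1)$ into $[0,1)$, so the estimate \eqref{eq:sn.max} is immediate in this case.

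For the general case, fix $u\in[0,1)$ and set $g_u(\alpha):=\sn(uK(\alpha),\alpha)$. Since $K$ is analytic on $\C\setminus((-\infty,-1]\cup[1,\infty))$, and $\sn(\cdot,\alpha)$ is jointly meromorphic with its poles lying on the shifted lattice $2mK(\alpha)+(2n+1)\ii K'(\alpha)$, one checks that for $u\in[0,1)$ the point $uK(\alpha)$ avoids the polar set. Hence $g_u$ is holomorphic on a neighborhood of $\{|\alpha|\le 1\}\setminus\{\pm 1\}$. The goal is then to apply the maximum modulus principle on the closed unit disk with the two exceptional points excised: if $|g_u|\le 1$ on $|\alpha|=1$ and $g_u$ stays bounded as $\alpha\to\pm 1$, then $|g_u|<1$ throughout the open disk (strict inequality from the real case and the open mapping theorem, since $g_u$ is non-constant).

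The decisive step---and the main obstacle---is the boundary estimate on $|\alpha|=1$, $\alpha\ne\pm 1$. Here I would invoke Jacobi's imaginary-modulus transformation, which expresses $\sn(z,\alpha)$ for $\alpha$ on the unit circle as a quotient of $\sn$-, $\cn$-, $\dn$-values at a \emph{real} modulus $\alpha'\in(0,1)$ determined by $\alpha$, evaluated at a correspondingly rescaled argument, with $K(\alpha)$ transforming into the associated real quarter-periods. Applied to $z=uK(\alpha)$, this reduces the bound $|\sn(uK(\alpha),\alpha)|\le 1$ to a statement about $\sn$ with a real modulus on a real segment, which follows from the monotonicity step above. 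Near the two points $\pm 1$ the restriction $u<1$ is crucial, as it keeps the argument $uK(\alpha)$ uniformly away from the poles of $\sn$ that drift onto the real axis as $\alpha\to\pm 1$, so $g_u$ extends continuously by $g_u(\pm 1)=\tanh$-type values of modulus strictly less than $1$. Selecting the right modulus transformation and verifying that the rescaled argument still lies in the safe real interval is the technical core of the argument, and is where I would expect to spend the bulk of the work (and is precisely what is done in the companion paper).
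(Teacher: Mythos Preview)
The paper does not actually prove this lemma; it is quoted from the companion paper \cite{ss15}, which, as the authors say, is ``devoted entirely'' to the function $\alpha\mapsto\sn(uK(\alpha),\alpha)$. So there is no in-paper argument to compare against. That said, your outline has the right architecture---real case by monotonicity, then a maximum-modulus argument in~$\alpha$---but two of the steps you rely on are not correct as stated.

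First, the boundary reduction. There is no standard ``imaginary-modulus transformation'' that takes a \emph{general} modulus on the unit circle to a real modulus in $(0,1)$. The classical identities (imaginary argument, purely imaginary modulus, reciprocal modulus, Landen/Gauss) map very specific loci of moduli to one another; for $\alpha=e^{i\theta}$ with generic $\theta$ none of them lands you back on the real segment. This boundary estimate is precisely the hard part, and your sketch treats it as a black box. Relatedly, even if you obtained $|g_u|\le 1$ on $|\alpha|=1$, the lemma demands the \emph{strict} inequality there as well, which a maximum-modulus argument on the open disk does not deliver; you would still need a direct analysis on the circle.

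Second, the claim about the limits at $\alpha=\pm 1$ is false. Along the real axis, $K(\alpha)\to\infty$ as $\alpha\to 1^-$, and since $u=\frac{1}{K(\alpha)}\int_0^{\sn(uK(\alpha),\alpha)}\frac{dt}{\sqrt{(1-t^2)(1-\alpha^2 t^2)}}$, any subsequential limit $s<1$ of $\sn(uK(\alpha),\alpha)$ would force $u\to 0$. Hence $g_u(\alpha)\to 1$ for every fixed $u\in(0,1)$, so $g_u$ does \emph{not} extend to $\pm 1$ with modulus strictly below~$1$. This does not by itself destroy a maximum-modulus strategy (two boundary points are removable for bounded holomorphic functions), but it shows that the ``$u<1$ keeps us safely away'' heuristic is misleading: the only thing $u<1$ buys is avoidance of poles, not a strict bound at the degenerate moduli.
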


\begin{prop}\label{prop:CD_asym}
	Let $|\alpha| \leq 1$, $\alpha \neq \pm 1$ and $z\in\C$, then
	\begin{equation}\label{eq:CD_asym}
	\begin{aligned}
	C_{k}(z,\alpha)&=\frac{\sqrt{2\pi}}{1-\alpha^{2}}ze^{-zK(\alpha)}\frac{1}{k^{3/2}}+O(k^{-5/2}), & k\to\infty,
\\
	D_{k}(z,\alpha)&=\sqrt{2\pi}e^{-zK(\alpha)}\frac{1}{k^{1/2}}+O(k^{-3/2}), & k\to\infty.
	\end{aligned}
	\end{equation}
\end{prop}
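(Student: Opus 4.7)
My approach is Laplace's (saddle point) method, with the saddle identified via Lemma~\ref{lem:min}: parametrizing the contour by $t=uK(\alpha)$, $u\in[0,2]$, the function $|\sn(uK(\alpha),\alpha)|$ attains its unique maximum $1$ at $u=1$, so $\sn^k(t,\alpha)$ concentrates both integrands at $t=K(\alpha)$ as $k\to\infty$.

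First I would localize. Fix a small $\delta>0$ and split $[0,2]$ at $|u-1|\leq\delta$ versus $|u-1|>\delta$. By Lemma~\ref{lem:min}, continuity, and compactness, $|\sn(uK(\alpha),\alpha)|\leq 1-\eta$ for some $\eta>0$ on the outer piece, so its contribution is $O((1-\eta)^k)$ and absorbed into the remainder. On the inner piece, substitute $t=K(\alpha)+\tau$ and apply the quarter-period shift formulas
\begin{equation*}
\sn(K(\alpha)+\tau,\alpha)=\cd(\tau,\alpha),\quad \cn(K(\alpha)+\tau,\alpha)=-\alpha'\sd(\tau,\alpha),\quad \dn(K(\alpha)+\tau,\alpha)=\alpha'\nd(\tau,\alpha),
\end{equation*}
with $\alpha':=\sqrt{1-\alpha^2}$, together with the Taylor expansions $\cd(\tau,\alpha)=1-(1-\alpha^2)\tau^2/2+O(\tau^4)$, $\sd(\tau,\alpha)=\tau+O(\tau^3)$, and $\nd(\tau,\alpha)=1+O(\tau^2)$. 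These give $\cd^k(\tau,\alpha)=\exp\!\bigl(-k(1-\alpha^2)\tau^2/2+O(k\tau^4)\bigr)$.

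Next, I would rescale $\tau=w/\sqrt{k}$ and extend $w$ to the full ray with exponentially small error. For $D_k$, the even factor $\alpha'\nd(\tau)$ pairs with the Gaussian $\int_\R e^{-(1-\alpha^2)w^2/2}\,dw=\sqrt{2\pi/(1-\alpha^2)}$, and the identity $\alpha'/\sqrt{1-\alpha^2}=1$ delivers the announced $\sqrt{2\pi}\,e^{-zK(\alpha)}k^{-1/2}$. For $C_k$, the factor $-\alpha'\sd(\tau)\sim-\alpha'\tau$ is odd in $\tau$; its contribution from the constant part of $e^{-z\tau}$ vanishes by parity, and pairing with the linear part $-z\tau$ produces the second moment $\int_\R w^2 e^{-(1-\alpha^2)w^2/2}\,dw=\sqrt{2\pi}/(1-\alpha^2)^{3/2}$, which combined with the prefactor $z\alpha'$ and the identity $\alpha'/(1-\alpha^2)^{3/2}=1/(1-\alpha^2)$ yields $\sqrt{2\pi}\,z\,e^{-zK(\alpha)}/((1-\alpha^2)k^{3/2})$. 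The remainders $O(k^{-5/2})$ for $C_k$ and $O(k^{-3/2})$ for $D_k$ then follow from the next Taylor orders, estimated uniformly on $|\tau|\leq\delta$.

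The principal subtlety is that the integration path is a segment in $\C$ rather than $\R$, so the saddle-point quadratic $-(1-\alpha^2)\tau^2/2$ must have strictly negative real part along the directional parametrization $\tau=sK(\alpha)$, $s\in(-1,1)$. This amounts to $\Re\bigl((1-\alpha^2)K(\alpha)^2\bigr)>0$, a property itself implied by the strict inequality $|\cd(sK(\alpha),\alpha)|<1$ for $s\in(0,1]$ supplied by Lemma~\ref{lem:min} (via the Taylor expansion of $|\cd|^2$ at $s=0$). With this in hand, the Laplace estimate reduces to a standard saddle-point expansion, with rotation of the one-dimensional Gaussian contour justified by analyticity of the elliptic functions on a complex neighborhood of the segment.
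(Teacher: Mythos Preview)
Your argument is correct and takes essentially the same route as the paper: both localize at $t=K(\alpha)$ via Lemma~\ref{lem:min} and then apply Laplace/saddle-point asymptotics, the paper by invoking Olver's packaged theorem \cite[Sec.~4.7, Thm.~7.1]{olver97} while you carry out the expansion by hand through the quarter-period shift formulas. One minor point: the Taylor expansion of $|\cd(sK(\alpha),\alpha)|^2$ together with Lemma~\ref{lem:min} only yields $\Re\bigl((1-\alpha^2)K(\alpha)^2\bigr)\geq 0$, not strict positivity, but this is harmless since the contour rotation to the steepest-descent direction that you invoke at the end (valid because $(1-\alpha^2)K(\alpha)^2\neq 0$) is what actually justifies the Gaussian evaluation.
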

\begin{proof}
Notice that we have from \eqref{eq:sn.max} that if $z_1, z_2 \neq K(\alpha)$ lie on the line segments connecting $0$ with $K(\alpha)$ and $K(\alpha)$ with $2K(\alpha)$, respectively, then, for any bounded function $f$,
\begin{equation}\label{eq:exp.term}
\left|\left(\int_0^{z_1} + \int_{z_2}^{2K(\alpha)} \right) f(t) \sn^k(t,\alpha) \dd t \right| = O(p^k), \quad k \to \infty,
\end{equation} 
with $0<p<1$. On the other hand, on a sufficiently small neighborhood of $K(\alpha)$ where $\log \sn(t,\alpha)$ is analytic, we apply 
the saddle point method following \cite{olver97}. This yields $k^{-1/2}$ or $k^{-3/2}$ leading terms, thus the exponentially small term \eqref{eq:exp.term} can be neglected.
In the notation of \cite[Sec.~4.7, Thm.~7.1]{olver97}, we have
\begin{equation}
p(K(\alpha)) = 0, \quad p''(K(\alpha)) = 1-\alpha^2, \quad p'''(K(\alpha))=0
\end{equation}	
and, denoting by $q_C$, $q_D$ the corresponding functions $q$ for $C_k$, $D_k$, respectively,
\begin{align}
q_C(K(\alpha))&=0,& q_C''(K(\alpha))&=2ze^{-z K(\alpha)}\sqrt{1-\alpha^2}, 
\\
q_D(K(\alpha))&=e^{-z K(\alpha)} \sqrt{1-\alpha^2}, && 
\end{align}
thus we receive the asymptotic formulas \eqref{eq:CD_asym}.
\end{proof}

As showed in the following lemma, $C_k$ and $D_k$ satisfy certain difference equations and are closely related to the eigenvectors of $J(\alpha)$.

\begin{lem}\label{lem:CD_recur}
 Let $|\alpha|\leq1$, $\alpha\neq\pm1$, then, for all $z\in\C$, one has
\begin{equation}\label{eq:CD_eqs}
 \begin{aligned}
 -zD_{0}-\alpha^{2}C_{1}&=e^{-2K(\alpha)z}-1, &  kC_{k-1}-zD_{k}-\alpha^{2}(k+1)C_{k+1}&=0,&
 \\
 -zC_{0}-D_{1}&=-e^{-2K(\alpha)z}-1, & kD_{k-1}-zC_{k}-(k+1)D_{k+1}&=0, & k \in \N.
 \end{aligned}
\end{equation}
Moreover, the sequence $u=\{u_{n}\}_{n \in \N}$ defined by formulas
\begin{equation}\label{eq:def_u_k}
\begin{aligned}
	u_{2k+1}&=\ii(-1)^{k}\alpha^{k}e^{\ii K(\alpha) z}C_{2k}\left(\ii z,\alpha\right),
	\\
	u_{2k+2}&=(-1)^{k+1}\alpha^{k}e^{\ii K(\alpha) z}D_{2k+1}\left(\ii z,\alpha\right), \quad k \in \N_0,
\end{aligned}
\end{equation}
is the solution of the system of equations
\[
\begin{aligned}
u_{2}-zu_{1}&=-2\cos(K(\alpha)z),&
\\
(2k+1)u_{2k+2}-zu_{2k+1}+2k\alpha u_{2k}&=0,&\\
2k\alpha u_{2k+1}-zu_{2k}+(2k-1)u_{2k-1}&=0,&  k \in \N,
\end{aligned}
\]
or equivalently 
\begin{equation}
	\mathcal{J}(\alpha)u=zu-2\cos(K(\alpha)z)e_1, \quad z \in \C.
	\label{eq:Ju}
\end{equation}
\end{lem}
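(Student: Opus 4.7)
My plan is to establish the recurrences \eqref{eq:CD_eqs} for $C_{k}$ and $D_{k}$ first, by integration by parts on the segment $[0,2K(\alpha)]$, and then to obtain the equations for $u=\{u_{n}\}$ by direct substitution of \eqref{eq:def_u_k} into those recurrences with the spectral parameter $\ii z$.

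For the recurrences, I would differentiate $e^{-zt}\cn(t,\alpha)\sn^{k}(t,\alpha)$ and $e^{-zt}\dn(t,\alpha)\sn^{k}(t,\alpha)$ using the standard formulas $(\sn)'=\cn\dn$, $(\cn)'=-\sn\dn$, $(\dn)'=-\alpha^{2}\sn\cn$, and then reduce each derivative to a linear combination of $e^{-zt}\cn\,\sn^{j}$ or $e^{-zt}\dn\,\sn^{j}$ by means of the identities $\cn^{2}=1-\sn^{2}$ and $\dn^{2}=1-\alpha^{2}\sn^{2}$. Integrating the resulting identities over $[0,2K(\alpha)]$ yields the combinations $kD_{k-1}-zC_{k}-(k+1)D_{k+1}$ and $kC_{k-1}-zD_{k}-\alpha^{2}(k+1)C_{k+1}$ on one side, and the boundary values $[e^{-zt}\cn\,\sn^{k}]_{0}^{2K(\alpha)}$, respectively $[e^{-zt}\dn\,\sn^{k}]_{0}^{2K(\alpha)}$, on the other. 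For $k\geq 1$ these boundary contributions vanish since $\sn(0,\alpha)=\sn(2K(\alpha),\alpha)=0$ by analytic continuation from the real case; for $k=0$ they evaluate to $-e^{-2K(\alpha)z}-1$ and $e^{-2K(\alpha)z}-1$ thanks to $\cn(0,\alpha)=1$, $\cn(2K(\alpha),\alpha)=-1$, $\dn(0,\alpha)=\dn(2K(\alpha),\alpha)=1$, which are precisely the inhomogeneous terms in \eqref{eq:CD_eqs}. Since $\sn$, $\cn$, $\dn$ are entire in $t$ for every admissible $\alpha$, the integration by parts on the complex segment is legal.

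For the second part, I would insert \eqref{eq:def_u_k} into each of the three equations to be verified, abbreviating $\tilde z:=\ii z$. The equation indexed by $2k+1$, after dividing out the common prefactor $(-1)^{k}\alpha^{k}e^{\ii K(\alpha) z}$, becomes
\[
2kD_{2k-1}(\tilde z)-\tilde z C_{2k}(\tilde z)-(2k+1)D_{2k+1}(\tilde z)=0,
\]
i.e.\ the first recurrence of \eqref{eq:CD_eqs} at index $2k$. Analogously, the equation indexed by $2k$, after factoring the prefactor $\ii(-1)^{k}\alpha^{k-1}e^{\ii K(\alpha) z}$, reduces to
\[
(2k-1)C_{2k-2}(\tilde z)-\tilde z D_{2k-1}(\tilde z)-2k\alpha^{2}C_{2k}(\tilde z)=0,
\]
the second recurrence of \eqref{eq:CD_eqs} at index $2k-1$. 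The initial identity $u_{2}-zu_{1}=-2\cos(K(\alpha)z)$ follows from the $k=0$ boundary case: from $-\tilde z C_{0}(\tilde z)-D_{1}(\tilde z)=-e^{-2K(\alpha)\tilde z}-1$, multiplication by the prefactor $-e^{\ii K(\alpha)z}$ yields exactly $-(e^{-\ii K(\alpha)z}+e^{\ii K(\alpha)z})=-2\cos(K(\alpha)z)$. The only nontrivial aspect here is the bookkeeping of the powers of $\ii$, $\alpha$, and the exponential factor $e^{\ii K(\alpha)z}$, but once the dictionary is set each identity is mechanical; the reformulation as the operator equation $\mathcal{J}(\alpha)u=zu-2\cos(K(\alpha)z)e_{1}$ is immediate from the definition of $\mathcal{J}(\alpha)$ entry-by-entry.
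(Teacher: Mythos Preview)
Your proof is correct and follows exactly the route taken in the paper: integration by parts using the derivative formulas \eqref{eq:dn.cn.ode}, the identities \eqref{eq:dn.sn.cn.id}, and the special values \eqref{eq:sn.cn.dn.sv} to obtain \eqref{eq:CD_eqs}, followed by direct substitution of \eqref{eq:def_u_k} to deduce \eqref{eq:Ju}. Your write-up simply spells out the computations that the paper leaves implicit.
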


\begin{proof}
We obtain \eqref{eq:CD_eqs} by integrating \eqref{eq:C_k.D_k_defs} by parts, appealing to the derivative formulas \eqref{eq:dn.cn.ode} and using the identities \eqref{eq:dn.sn.cn.id} as well as the special values \eqref{eq:sn.cn.dn.sv}. 
The second statement follows immediately from equations~\eqref{eq:def_u_k} and \eqref{eq:CD_eqs}.
\end{proof}

Finally, we find the eigenvectors of $J(\alpha)$.

\begin{prop}\label{prop:eigenvec}
Let $0<|\alpha|<1$ and let $\{\lambda_N\}_{N \in \Z}$ be the simple eigenvalues of $J(\alpha)$, cf. \eqref{eq:EV.def}.
Then the eigenvectors of $J(\alpha)$ corresponding to the eigenvalues $\{\lambda_N\}_{N \in \Z}$ read
\begin{equation}\label{eq:eigenvec.def}
v_{2k+1}^{(N)}:=\ii(-1)^{k}\alpha^{k}C_{2k}\left(\ii \lambda_N,\alpha\right), \quad
v_{2k+2}^{(N)}:=(-1)^{k+1}\alpha^{k}D_{2k+1}\left(\ii \lambda_N,\alpha\right), \quad k \in \N_0,
\end{equation}
where $C_k$, $D_k$ are as in \eqref{eq:C_k.D_k_defs}. Moreover,
\begin{equation}\label{eq:eigenvec.asym}
\begin{aligned}
v_{2k+1}^{(N)}&=\ii\pi^{1/2}(-1)^{N+k}\frac{\lambda_N}{2(1-\alpha^2)}  \frac{\alpha^{k}}{k^{3/2}}+O(\alpha^{k}k^{-5/2}),& k\to\infty,
\\
v_{2k+2}^{(N)}&=\ii\pi^{1/2}(-1)^{N+k}\frac{\alpha^{k}}{k^{1/2}}+O(\alpha^{k}k^{-3/2}),& k\to\infty.
\end{aligned}
\end{equation}
\end{prop}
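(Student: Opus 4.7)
The plan is to synthesize Lemma \ref{lem:CD_recur}, Proposition \ref{prop:CD_asym}, and Theorem \ref{thm:spec_J}. First I would compare the definition \eqref{eq:eigenvec.def} with \eqref{eq:def_u_k} and observe that $v_n^{(N)} = e^{-\ii K(\alpha)\lambda_N}\, u_n\big|_{z=\lambda_N}$ for every $n \in \N$; equivalently, $v^{(N)} = -\ii(-1)^N u(\lambda_N)$, since $e^{-\ii K(\alpha)\lambda_N}=e^{-\ii\pi(2N+1)/2}=-\ii(-1)^N$. By Lemma \ref{lem:CD_recur}, the sequence $u(\lambda_N)$ satisfies \eqref{eq:Ju} with $z=\lambda_N$. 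The eigenvalue formula \eqref{eq:EV.def} gives $K(\alpha)\lambda_N = \pi(2N+1)/2$, hence $\cos(K(\alpha)\lambda_N)=0$, which kills the inhomogeneous term and yields the formal identity $\mathcal{J}(\alpha)v^{(N)}=\lambda_N v^{(N)}$.

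Second, I would plug the asymptotic expansions of $C_{2k}(\ii\lambda_N,\alpha)$ and $D_{2k+1}(\ii\lambda_N,\alpha)$ from Proposition \ref{prop:CD_asym} into \eqref{eq:eigenvec.def}. Using the same phase identity $e^{-\ii K(\alpha)\lambda_N}=-\ii(-1)^N$, together with $(2k)^{-3/2}=(2\sqrt{2})^{-1}k^{-3/2}$ and $(2k+1)^{-1/2}=2^{-1/2}k^{-1/2}+O(k^{-3/2})$, a short computation absorbs the extra $\sqrt{2}$ factors into the constants and produces exactly the formulas \eqref{eq:eigenvec.asym}, with the prefactors $\sqrt{\pi}\lambda_N/(2(1-\alpha^2))$ and $\sqrt{\pi}$. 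The leading terms are non-zero (since $\lambda_N\neq 0$ and $\alpha\neq 0$), and the geometric decay $|\alpha|^k$ with $|\alpha|<1$ immediately gives $v^{(N)}\in\ell^2(\N)\setminus\{0\}$.

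Combining the two observations, $v^{(N)}$ is a non-zero element of $\Dom(J(\alpha))$ satisfying $J(\alpha)v^{(N)}=\lambda_N v^{(N)}$, and by the simplicity of $\lambda_N$ established in Theorem \ref{thm:spec_J} it spans the corresponding one-dimensional eigenspace. The only mildly delicate point is the phase bookkeeping through the substitution $z=\lambda_N$ and the clean separation of the half-integer powers into leading terms and $O(k^{-5/2})$, $O(k^{-3/2})$ remainders; everything else is routine once Lemma \ref{lem:CD_recur} and Proposition \ref{prop:CD_asym} are in hand, so I do not anticipate a substantial analytic obstacle.
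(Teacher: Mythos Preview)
Your proposal is correct and follows essentially the same route as the paper: identify $v^{(N)}=e^{-\ii K(\alpha)\lambda_N}u$, use $\cos(K(\alpha)\lambda_N)=0$ in \eqref{eq:Ju} to get the eigenvector equation, and read off the asymptotics \eqref{eq:eigenvec.asym} directly from Proposition~\ref{prop:CD_asym}. The paper's proof is a terse three-line version of exactly this argument, without spelling out the phase and $\sqrt{2}$ bookkeeping that you (correctly) track.
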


\begin{proof}
Since $ v^{(N)}=e^{-\ii K(\alpha) \lambda_N}u$, the second claim follows from Proposition \ref{prop:CD_asym}. Moreover, $0 \neq v^{(N)} \in \ell^2(\N)$. 
Since $\cos(K(\alpha)\lambda_N)=0$, equation \eqref{eq:Ju} yields
$   J(\alpha)v^{(N)}= \lambda_N v^{(N)}$.
\end{proof}

We conclude this subsection by showing the completeness of $\{v^{(N)}\}_{N\in\Z}$ in $\ell^{2}(\N)$.

\begin{prop}\label{prop:complete}
 Let $|\alpha|<1$, then the set of eigenvectors $\{v^{(N)}\}_{N \in \Z}$ of $J(\alpha)$, defined in \eqref{eq:eigenvec.def}, is complete in $\ell^{2}(\N)$.
\end{prop}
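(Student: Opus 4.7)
The plan is to show that the annihilator of $\{v^{(N)}\}_{N\in\Z}$ in $\ell^2(\N)$ is trivial. The case $\alpha=0$ is elementary ($J(0)$ decomposes into $2\times 2$ blocks and is self-adjoint), so I will assume $0<|\alpha|<1$. Starting from the hypothesis that $g\in\ell^2(\N)$ satisfies $\sum_{j\geq 1}\bar g_j v^{(N)}_j = 0$ for every $N\in\Z$, I would substitute the integral representations \eqref{eq:eigenvec.def} and interchange sum and integral. This step is legitimate by dominated convergence: Lemma~\ref{lem:min} gives $|\sn(t,\alpha)|\leq 1$ on the path $[0,2K(\alpha)]$, and Cauchy--Schwarz combined with $|\alpha|<1$ produces a uniform bound on the integrand. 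The upshot is
\[
\int_{0}^{2K(\alpha)} e^{-\ii\lambda_N t} H(t)\, \dd t = 0, \qquad N\in\Z,
\]
where $H(t) := \ii\cn(t,\alpha)\,A(\sn^2(t,\alpha)) - \dn(t,\alpha)\sn(t,\alpha)\,B(\sn^2(t,\alpha))$ with $A(w) := \sum_{k\geq 0} \bar g_{2k+1}(-\alpha w)^k$ and $B(w) := \sum_{k\geq 0} \bar g_{2k+2}(-\alpha w)^k$ analytic on $|w|<1/|\alpha|$.

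Next, reparametrizing $t=2K(\alpha)s$, $s\in[0,1]$, the conditions become $\int_0^1 H(2K(\alpha)s)\, e^{-\ii\pi(2N+1)s}\,\dd s=0$ for every $N\in\Z$. Since $\{e^{-\ii\pi(2N+1)s}\}_{N\in\Z}$ is $e^{-\ii\pi s}$ times the standard Fourier basis of $L^2([0,1])$, it is itself a complete orthonormal system there; hence $H$ vanishes identically on the segment $[0,2K(\alpha)]$. Observing that $H$ is analytic in complex $t$ on an open neighborhood of this segment (where $|\alpha\sn^2(t,\alpha)|<1$ keeps the series defining $A,B$ convergent), the identity theorem extends $H\equiv 0$ to a complex neighborhood of $t=0$.

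To conclude, I would exploit the parities $\sn(-t)=-\sn(t)$, $\cn(-t)=\cn(t)$, $\dn(-t)=\dn(t)$ to decouple
\[
H(t)+H(-t) = 2\ii\cn(t,\alpha)A(\sn^2(t,\alpha)),\qquad H(t)-H(-t) = -2\dn(t,\alpha)\sn(t,\alpha)B(\sn^2(t,\alpha)),
\]
both of which vanish near $t=0$. Since $\cn(0,\alpha)=1$ and $\dn(t,\alpha)\sn(t,\alpha)\sim t$, these identities force $A(\sn^2 t)=0=B(\sn^2 t)$ for small $t$; because $\sn^2 t\sim t^2$ covers a full complex neighborhood of $0$ (open mapping), $A$ and $B$ vanish identically, so all coefficients $\bar g_{2k+1}$, $\bar g_{2k+2}$ are zero and $g=0$. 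The main technical hurdle is that $[0,2K(\alpha)]$ is a genuinely complex line segment when $\alpha\notin\R$; this forces us to treat $H$ as an analytic function of complex $t$ and makes the sharp estimate of Lemma~\ref{lem:min} indispensable both for the interchange of sum and integral and for the analyticity of $H$ on a complex neighborhood of the segment.
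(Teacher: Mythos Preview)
Your argument is correct and quite different from the route taken in the paper. The paper does not touch the integral formulas for $v^{(N)}$ at all in the completeness proof; instead it invokes an abstract criterion of Dunford--Schwartz type. Concretely, from Lemma~\ref{lem:pert} and the representation \eqref{Jres.repr} one obtains a uniform resolvent bound on two rays close to the imaginary axis, while Proposition~\ref{prop:sa_spec} together with \eqref{Jres.repr} gives $(J(\alpha)-z)^{-1}\in\mathcal S_p$ for every $p>1$. These two facts, combined with the algebraic simplicity of the eigenvalues (Theorem~\ref{thm:spec_J}), feed directly into \cite[Cor.~XI.9.31]{dunford63} and yield completeness of the eigenvectors.

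Your proof is more hands-on: it exploits the explicit structure of the eigenvectors through $C_k$ and $D_k$, the bound of Lemma~\ref{lem:min}, and the observation that $\lambda_N t$ is real along the contour so that the orthogonality conditions become genuine Fourier coefficients on $[0,1]$. The pay-off is that you avoid the Dunford--Schwartz machinery entirely and stay within classical analysis (dominated convergence, completeness of the Fourier system, the identity theorem). The price is a heavier reliance on the elliptic-function input, in particular Lemma~\ref{lem:min}, which the paper's operator-theoretic proof does not need. Both arguments are short; yours is arguably the more transparent explanation of \emph{why} completeness holds for this specific family.
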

\begin{proof}
The proof is based on \cite[Cor.~XI.9.31]{dunford63} and the fact that $\{\lambda_N\}_{N \in \Z}$ are simple, 
see Theorem \ref{thm:spec_J}. 

It follows from \eqref{M.bound} and \eqref{Jres.repr} that there exist $c(\alpha), \delta(\alpha)>0$ such that
\begin{equation}
\|(J(\alpha)-z)^{-1}\| \leq C < \infty, \quad |z|>c(\alpha) \text{ and } ||\arg z| - \pi/2| < \delta(\alpha).
\end{equation}
Moreover, from Proposition~\ref{prop:sa_spec}, $(J(0)-z)^{-1} \in \mathcal S_p$ for every $p>1$, 
thus by \eqref{Jres.repr} and the ideal property of Schatten classes, we obtain that $(J(\alpha)-z)^{-1} \in \mathcal S_p$ for every $p>1$ as well. 
%
%
%In summary, the assumptions of \cite[Cor.~XI.9.31]{dunford63} are satisfied, so the claim follows.
\end{proof}

\subsection{The case $|\alpha|=1$}

While $\sigma(J(\alpha))$ is discrete for $|\alpha|<1$, i.e., a set of isolated points in $\C$,
it suddenly fills the entire complex plane if $|\alpha|=1$, $\alpha \neq \pm 1$.

\begin{thm}
Let $|\alpha|=1$ and $\alpha \neq \pm 1$. Then
\begin{equation}
\sigma(J(\alpha)) = \sigma_{{\rm e}2}(J(\alpha))=\C.
\end{equation}		
\end{thm}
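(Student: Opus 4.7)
The plan is to parallel the proof of Proposition~\ref{prop:J_inv}(ii): for an arbitrary $z\in\C$, I will exhibit a singular sequence for $J(\alpha)-z$, which places $z$ in $\sigma_{e2}(J(\alpha))$. Together with the trivial inclusions $\sigma_{e2}(J(\alpha))\subset\sigma(J(\alpha))\subset\C$, this yields both claimed equalities simultaneously.

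The candidate singular sequence is built from the sequence $u(z)=\{u_n(z)\}_{n\in\N}$ given by \eqref{eq:def_u_k}, which by Lemma~\ref{lem:CD_recur} satisfies $\mathcal{J}(\alpha)u(z)=z\,u(z)-2\cos(K(\alpha)z)e_1$. As in Proposition~\ref{prop:J_inv}(ii), I damp it by $\tilde u_n(z,a):=a^{n}u_n(z)$ with $a\in(0,1)$, and I claim that $\tilde u(z,a)/\|\tilde u(z,a)\|$ becomes a singular sequence at $z$ when $a\to 1^{-}$. Three ingredients are required. First, Proposition~\ref{prop:CD_asym} together with $|\alpha|=1$ yields $|u_{2k+2}(z)|\sim\sqrt{2\pi}\,|e^{-\ii K(\alpha)z}|\,k^{-1/2}$, whence
\[
\|\tilde u(z,a)\|^{2}\;\geq\;c(z)\sum_{k\geq 1}\frac{a^{4k}}{k}\;\geq\;-\frac{c(z)}{2}\log(1-a^{4}),
\]
which diverges as $a\to 1^{-}$. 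Second, a direct manipulation of \eqref{eq:Ju} gives the bulk defect
\[
\bigl((\mathcal{J}(\alpha)-z)\tilde u(z,a)\bigr)_{n}=-(1-a^{2})\,a^{n-1}\,w_{n}\,u_{n+1}(z),\qquad n\geq 2,
\]
plus an $a$-bounded residue at $n=1$ produced by the inhomogeneity $2\cos(K(\alpha)z)$; in particular $\tilde u(z,a)\in\Dom(J(\alpha))$. Third, $\langle e_{k},\tilde u(z,a)\rangle/\|\tilde u(z,a)\|\to 0$ trivially for every fixed $k$, since the numerator $a^{k}|u_{k}(z)|$ stays bounded in $a$ while the denominator blows up.

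The delicate step, and the main obstacle, is the uniform bound $\|(J(\alpha)-z)\tilde u(z,a)\|=O(1)$ as $a\to 1^{-}$. Splitting the $\ell^{2}$-sum by parity, the odd-index terms of the defect equal $(1-a^{2})^{2}\sum_{k\geq 1}a^{4k-2}(2k-1)^{2}|u_{2k}(z)|^{2}$; using $|u_{2k}(z)|^{2}\sim C(z)/k$ this is of order $(1-a^{2})^{2}\sum_{k}k\,a^{4k}\sim(1-a^{2})^{2}/(1-a^{4})^{2}=(1+a^{2})^{-2}$, which stays bounded. The even-index terms, for which $|w_{n}u_{n+1}(z)|^{2}\sim(2k)^{2}/k^{3}$, contribute only $O((1-a^{2})^{2}\,|\log(1-a^{4})|)$, which vanishes. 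The precise cancellation between the $(1-a^{2})^{2}$ factor coming from the damping and the slow tail $1/k$ of $|u_{2k}(z)|^{2}$ is what forces the essential spectrum to fill $\C$; once this bound is secured, the ratio $\|(J(\alpha)-z)\tilde u(z,a)\|/\|\tilde u(z,a)\|$ tends to $0$ thanks to the logarithmic growth of the denominator, and any discrete sequence $a_{j}\uparrow 1$ produces the required singular sequence.
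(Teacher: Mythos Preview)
Your proof is essentially the paper's: both build a singular sequence for $J(\alpha)-z$ by damping the solution $u$ of \eqref{eq:Ju} as $\tilde u_n=a^n u_n$ and letting $a\to 1^-$, controlling the defect via the asymptotics of Proposition~\ref{prop:CD_asym}.

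One correction: the bulk defect formula you state is incomplete. A direct computation from \eqref{eq:Ju} gives, for $n\geq 2$,
\[
\bigl((\mathcal{J}(\alpha)-z)\tilde u\bigr)_n=(1-a)\,a^{n-1}z\,u_n-(1-a^{2})\,a^{n-1}w_n\,u_{n+1},
\]
so there is an extra term $(1-a)\,a^{n-1}z\,u_n$ that you omitted (the paper keeps both, writing the defect as $(1-a)a^{n-1}[(1+a)w_n u_{n+1}-z u_n]$). This slip is harmless: since $|u_n|^{2}=O(1/n)$, the additional $\ell^{2}$-contribution is at most $C|z|^{2}(1-a)^{2}\sum_{n}a^{2n-2}/n=O\bigl((1-a)^{2}|\log(1-a)|\bigr)\to 0$, and your subsequent odd/even estimates for the dominant piece $-(1-a^{2})a^{n-1}w_n u_{n+1}$ are correct and yield the desired $O(1)$ bound.
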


\begin{proof}
We proceed analogously to the proof of the second claim in Proposition \ref{prop:J_inv}, however, we begin
with the sequence $u$  defined in \eqref{eq:def_u_k}, use that $u$ satisfies \eqref{eq:Ju} and that we know 
the asymptotic behavior of $C_k$ and $D_k$, as stated in Proposition~\ref{prop:CD_asym}.
	
Take arbitrary $z \in \C$. We define a family of sequences $u(a)$, $a\in (0,1)$, by putting
\begin{equation}
u_{n}(a)=a^{n} u_{n}, \quad n \in \N,
\end{equation}	
where $u_n$ is as in \eqref{eq:def_u_k}. By Proposition \ref{prop:CD_asym}, one has
\begin{equation}\label{|uk|.asym}
|u_{2k+1}|= \frac{\sqrt \pi |z| }{2|1-\alpha^2|} \frac{1}{k^{3/2}}(1+o(1)),
\quad 
|u_{2k+2}|= \sqrt{\pi}\frac{1}{k^{1/2}}(1+o(1)), \quad k\to\infty.
\end{equation}
Thus $u(a) \in \ell^2(\N)$ for every $a\in (0,1)$ and there exist constants $C_1 >0$, $C_2 \geq 0$, independent of $a$, such that

\begin{equation}\label{ua.norm}
\|u(a)\|^2 \geq C_1 a^4 \left(\sum_{k=1}^\infty \frac{a^{4k}}{k} \right) -C_2 = -C_1 a^{4}\ln\left(1-a^4\right)-C_2.
\end{equation}	
Since $u$ satisfies \eqref{eq:Ju}, we get 
\begin{equation}
\begin{aligned}
\|(\mathcal J(\alpha)-z)u(a)\|^2  &= (1-a)^2 \sum_{k=1}^{\infty} a^{4k}|(2k+1)(a+1)u_{2k+2} - z u_{2k+1}|^2 
\\
& \quad + \frac{(1-a)^2}{a^2} \sum_{k=1}^{\infty} a^{4k}| 2k \alpha (a+1)u_{2k+1} - z u_{2k}|^2 
\\
& \quad + a^2 |(a-1) u_2 -2 \cos (K(\alpha) z)|^2.
\end{aligned}
\end{equation}
Hence, for $a \in (0,1)$, we have from \eqref{|uk|.asym} that
\begin{equation}\label{Jz.ub}
\begin{aligned}
\|(\mathcal J(\alpha)-z)u(a)\|^2 & \leq C_3 \frac{(1-a)^2}{a^2} \sum_{k=1}^{\infty} k a^{4k} + C_4 
= C_3 \frac{a^2}{(1+a)^{2}(1+a^2)^2} + C_4 \leq C_5,
\end{aligned}
\end{equation}
where $C_3$, $C_4$ and $C_5$ depend on $z$, but are independent of $a$. Notice that \eqref{Jz.ub} implies in particular that $u(a) \in \Dom(J(\alpha))$ for all $a \in (0,1)$.

By putting \eqref{|uk|.asym}, \eqref{ua.norm} and \eqref{Jz.ub} together, we obtain
\[
\forall k \in \N, \quad \lim_{a \to 1-} \frac{\langle e_k,u(a) \rangle}{\|u(a)\|} = 0 \quad \text{ and }
\quad 
\lim_{a \to 1-} \frac{\|(J(\alpha)-z)u(a)\|}{\|u(a)\|} = 0,
\]
thus $z \in \sigma_{e2}(J(\alpha))$ by \cite[Thm.~IX.1.3]{edmunds87}.
\end{proof}

\begin{rem}
The asymptotic formulas \eqref{eq:eigenvec.asym} have been derived as a direct consequence
of Proposition \ref{prop:CD_asym}, thus they remain valid also for $|\alpha|=1$, $\alpha\neq\pm1$.
For such $\alpha$, one observes that $v^{(N)}\notin\ell^{2}(\N)$. Consequently, since the solution of the difference equations $\mathcal J(\alpha) u = z u$ is unique up to a multiplicative constant, we get that 
$\{\lambda_N\}_{N \in \Z}$, defined as in \eqref{eq:EV.def}, are not eigenvalues of $J(\alpha)$.
\end{rem}

\subsection{Other properties of eigenvectors}

In this subsection, we provide some additional results related to eigenvectors of $J(\alpha)$ for $|\alpha|<1$.
Namely, we derive a generating function formula for eigenvectors, give a Rodriguez-like identity for orthogonal
polynomials associated with Jacobi matrix $\mathcal{J}(\alpha)$ and present an integral formula for the norm
of eigenprojections.

Note that formulas for $v^{(N)}$ given in Proposition \ref{prop:eigenvec} are expressible in terms of Fourier
coefficients of some analytic functions. Indeed, let us put
\begin{align}
 \mathcal{C}_{k}(s,\alpha)&:=e^{-\ii \frac s 2}\cn\left(\frac{K(\alpha)s}{\pi},\alpha\right)\sn^{2k}\left(\frac{K(\alpha)s}{\pi},\alpha\right), 
\label{eq:def_calC_k}
\\
\mathcal{D}_{k}(s,\alpha)&:=e^{-\ii \frac s 2} \dn \left(\frac{K(\alpha)s}{\pi},\alpha\right)\sn^{2k+1}\left(\frac{K(\alpha)s}{\pi},\alpha\right), \quad k \in \N_0,
\label{eq:def_calD_k}
\end{align}
and denote the corresponding Fourier coefficients by $\gamma_{n}(k)$ and $\delta_{n}(k)$, respectively, 
\begin{equation}
\gamma_{n}(k):=\frac{1}{2\pi}\int_{0}^{2\pi}e^{-\ii ns}\,\mathcal{C}_{k}(s,\alpha)\dd s,
\qquad 
\delta_{n}(k):=\frac{1}{2\pi}\int_{0}^{2\pi}e^{-\ii ns}\,\mathcal{D}_{k}(s,\alpha)\dd s.
\label{eq:def_gamma_delta}
\end{equation}
Then
\begin{equation}
 \mathcal{C}_{k}(s,\alpha)=\sum_{n\in\mathbb{Z}}\gamma_{n}(k)e^{\ii ns}, \qquad 
 \mathcal{D}_{k}(s,\alpha)=\sum_{n\in\mathbb{Z}}\delta_{n}(k)e^{\ii ns}
\label{eq:calC_k_CalD_k_Four_ser}
\end{equation}
and we have
\begin{equation}
 v^{(N)}_{2k+1}=2\ii K(\alpha) (-1)^{k}\alpha^{k}\gamma_{N}(k), \qquad  v^{(N)}_{2k+2}=2 K(\alpha) (-1)^{k+1}\alpha^{k}\delta_{N}(k), \quad k \in \N_0.
\label{eq:eigenvec_via_gamma_delta} 
\end{equation}

In \cite{carlitz_dmj60}, Carlitz investigated the sequence of polynomials $\{P_{n}\}_{n \in \N}$
defined recursively by the recurrence rule
\[
 P_{n+1}(x)=xP_{n}(x)-w_{n-1}^{2}P_{n-1}(x), \quad n\geq2,
\]
with initial conditions $P_{1}(x)=1$ and $P_{2}(x)=x$; sequence $\{w_{n}\}_{n\in\N}$ is as in \eqref{eq:def_seq_w}.
If we put
\begin{equation}
 p_{n}(x):=\left(\prod_{k=1}^{n-1}\frac{1}{w_{k}}\right)P_{n}(x)=\frac{1}{\alpha^{\lfloor(n-1)/2\rfloor}(n-1)!}P_{n}(x), \quad  n\in\N, \ x \in \C,
\label{eq:p_rel_P}
\end{equation}
then the sequence $p=\{p_{n}\}_{n \in \N}$ is the solution of the eigenvalue equation $\mathcal{J}(\alpha)p(x)=xp(x)$ normalized such that $p_{1}(x)=1$.
Since such a solution is uniquely determined by its first entry, the vector $v^{(N)}$ is a constant multiple of 
$p(\lambda_N)$ with $\lambda_N$ as in \eqref{eq:EV.def}. In detail, for $0<|\alpha|\leq1$, $\alpha\neq\pm1$, we have
\begin{equation}
  v^{(N)}_{k}=v^{(N)}_{1} p_{k} (\lambda_N), \quad k \in \N, \ N \in \Z.
\label{eq:eigenvec_normalization_1stto1}
\end{equation}

\begin{rem}
Notice that the Fourier expansion \eqref{eq:cn_Four_ser} can be used to evaluate $v^{(N)}_{1}$, 
\begin{equation} \label{eq:eigenvec_v_1}
v^{(N)}_{1}=\frac{\ii K(\alpha)}{\pi} \int_{0}^{2\pi}e^{-\ii\left(N+ \frac 12 \right)s}\cn\left(\frac{K(\alpha)s}{\pi},\alpha\right)\dd s = \frac{2\pi\ii}{\alpha} \frac{q^{N+\frac 12}}{1+q^{2N+1}}, \quad N\in\Z.
 \end{equation}
Taking into account that $P_{k}(x)$ is a monic polynomial in $x$ of degree $k-1$ and combining \eqref{eq:p_rel_P}, \eqref{eq:eigenvec_normalization_1stto1} and \eqref{eq:eigenvec_v_1}, we obtain
an asymptotic formula for $v_{k}^{(N)}$ as $N\to\infty$ with fixed $k\geq2$, 
\begin{equation}
 v_{k}^{(N)}=\frac{2\ii\pi^{k}}{\alpha^{\lfloor(k+1)/2\rfloor} K(\alpha)^{k-1}(k-1)!}N^{k-1}q^{N+1/2}+O\left(N^{k-2}q^{N}\right), \quad N\to\infty,
\label{eq:asymp_v_N_inf}
\end{equation}
where $0<|\alpha|\leq1$, $\alpha\neq\pm1$. 
It is a straightforward application of elementary properties of Jacobian elliptic functions to verify that
\[
C_{k}(-z,\alpha)=-e^{2K(\alpha)z}C_{k}(z,\alpha), \quad  \quad D_{k}(-z,\alpha)=e^{2K(\alpha)z}D_{k}(z,\alpha).
\]
Hence, taking into account that $\lambda_{-N-1}=-\lambda_{N}$, one deduces from \eqref{eq:eigenvec.def} that
\[
 v_{k}^{(-N-1)}=(-1)^{k+1}v_{k}^{(N)}, \quad k\in\N, \ N\in\Z.
\]
The last relation together with asymptotic formula \eqref{eq:asymp_v_N_inf} allows for obtaining the asymptotic formula for $v_{k}^{(N)}$ also as $N\to-\infty$.
\end{rem}

To supplement the knowledge about polynomials $p_{n}$, we provide a Rodriguez-like formula for $p_{n}$, 
which seems to be a new result.

\begin{prop}
For all $z\in\C$ and $k \in \N_0$, one has
 \begin{align}
p_{2k+1}(z)&=\frac{(-1)^{k}}{\alpha^{k}(2k)!}\,
\frac{\dd^{2k}}{\dd u^{2k}}\bigg|_{u=0}\!\!e^{\ii zu}\dn\left(u,\alpha\right)\left[\frac{u}{\sn\left(u,\alpha\right)}\right]^{2k+1},
\label{eq:Rodriguez_odd}
\\[2mm]
p_{2k+2}(z)&=\frac{\ii(-1)^{k+1}}{\alpha^{k}(2k+1)!}\,
\frac{\dd^{2k+1}}{\dd u^{2k+1}}\bigg|_{u=0}\!\!e^{\ii zu}\cn\left(u,\alpha\right)\left[\frac{u}{\sn\left(u,\alpha\right)}\right]^{2k+2}\!.
\label{eq:Rodriguez_even}
\end{align}
\end{prop}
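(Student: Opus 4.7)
The plan is to recognise the two Rodriguez formulas as residues at the origin and then verify that the resulting expressions satisfy the same three-term recurrence and initial conditions as $\{p_{n}\}$. Since $u/\sn(u,\alpha)$ is analytic with value $1$ at $u=0$, extracting the coefficient of $u^{2k}$ of an analytic function multiplied by $[u/\sn(u,\alpha)]^{2k+1}$ is the same as $\Res_{u=0}$ of that function divided by $\sn^{2k+1}(u,\alpha)$, and analogously for the even case. Consequently the right-hand sides of \eqref{eq:Rodriguez_odd}--\eqref{eq:Rodriguez_even} can be rewritten as
\[
R_{2k+1}(z):=\frac{(-1)^{k}}{\alpha^{k}}\Res_{u=0}\frac{e^{\ii zu}\dn(u,\alpha)}{\sn^{2k+1}(u,\alpha)},\qquad R_{2k+2}(z):=\frac{\ii(-1)^{k+1}}{\alpha^{k}}\Res_{u=0}\frac{e^{\ii zu}\cn(u,\alpha)}{\sn^{2k+2}(u,\alpha)}.
\]
The polynomials $p_{n}$ are uniquely characterised by $p_{1}=1$, $p_{2}(z)=z$ and the recurrence $w_{n}p_{n+1}(z)=zp_{n}(z)-w_{n-1}p_{n-1}(z)$ arising from $\mathcal{J}(\alpha)p(z)=zp(z)$, so it is enough to check that $\{R_{n}\}$ obeys the same initial values and the same recurrence.

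The initial conditions $R_{1}=1$ and $R_{2}(z)=z$ follow from a direct Taylor expansion at $u=0$ using $\sn(u,\alpha)=u+O(u^{3})$ and $\cn(0,\alpha)=\dn(0,\alpha)=1$.

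For the recurrence, the decisive observation is that $\Res_{u=0}g'(u)=0$ whenever $g$ is meromorphic at the origin. Differentiating $e^{\ii zu}\cn(u,\alpha)/\sn^{2k+2}(u,\alpha)$, invoking the derivative rules \eqref{eq:dn.cn.ode} and the identity $\cn^{2}+\sn^{2}=1$ from \eqref{eq:dn.sn.cn.id}, and taking $\Res_{u=0}$ yields, after grouping terms, exactly $(2k+2)\alpha R_{2k+3}(z)-zR_{2k+2}(z)+(2k+1)R_{2k+1}(z)=0$. Analogously, differentiating $e^{\ii zu}\dn(u,\alpha)/\sn^{2k+1}(u,\alpha)$ and using $\dn^{2}+\alpha^{2}\sn^{2}=1$ produces $(2k+1)R_{2k+2}(z)-zR_{2k+1}(z)+2k\alpha R_{2k}(z)=0$. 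These are precisely the two parities of $w_{n}p_{n+1}=zp_{n}-w_{n-1}p_{n-1}$, and the proof is finished by induction on $n$.

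The only real obstacle is computational bookkeeping: in each of the two derivative expansions one must carefully track signs, the power of $\alpha$ coming from $\dn'=-\alpha^{2}\sn\,\cn$, and the Pythagorean substitution, so that the three residue terms combine with precisely the correct coefficients $w_{n-1}$ and $w_{n}$. No further ingredient beyond \eqref{eq:dn.cn.ode}--\eqref{eq:dn.sn.cn.id} is required.
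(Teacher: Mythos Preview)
Your argument is correct and genuinely different from the paper's proof. The paper does not verify the recurrence directly; instead it evaluates the Fourier coefficients $\gamma_{n}(k)$ by contour-integrating $e^{\ii(n+1)z}\mathcal{C}_{k}(z,\alpha)$ over a period parallelogram of the elliptic functions, picks up the residue at the pole $z=\ii\pi K'(\alpha)/K(\alpha)$, and then uses the already-established identifications \eqref{eq:eigenvec_via_gamma_delta}--\eqref{eq:eigenvec_v_1} between $\gamma_{n}(k)$, the eigenvectors $v^{(N)}$, and $p_{k}(\lambda_{N})$; the equality for general $z$ follows because both sides are polynomials agreeing on the infinite set $\{\lambda_{n}\}$.

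Your route is more elementary and self-contained: rewriting the right-hand sides as residues at the origin and exploiting $\Res_{u=0}g'=0$ together with \eqref{eq:dn.cn.ode}--\eqref{eq:dn.sn.cn.id} reproduces the three-term recurrence directly, so no knowledge of the eigenvectors, the Fourier expansion \eqref{eq:cn_Four_ser}, or the periodicity structure of $\sn,\cn,\dn$ is needed, and the identity is obtained for all $z$ at once. The paper's approach, on the other hand, makes transparent the link between the Rodriguez formula and the spectral data of $J(\alpha)$, which your argument bypasses entirely. Both are short; yours requires only the local Taylor behaviour of the Jacobian functions at $u=0$, while the paper's requires their global doubly-periodic structure.
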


\begin{proof}
We prove the statement for particular $z=\lambda_n$ with $n\in\Z$, see \eqref{eq:EV.def}.
Since both sides of equalities \eqref{eq:Rodriguez_odd} and \eqref{eq:Rodriguez_even} are polynomials in $z$, these
identities then hold for all $z\in\C$. 
 
By splitting the integral in \eqref{eq:def_gamma_delta} for $\gamma_{n}(k)$ to two integrals 
over $(0,\pi)$ and $(\pi,2\pi)$, applying the substitution $s=t-2\pi$ in the second one and using identities \eqref{eq:sn.cn.id.2K}, one finds
$$\gamma_{n}(k)
= \frac{1}{2\pi}\int_{-\pi}^{\pi}e^{\ii(n+1)s}\,\mathcal{C}_{k}(s,\alpha) \dd s.
$$
Take the parallelogram with vertices at points $\pm \pi$, $\pm \pi+2\pi\ii K'(\alpha)/K(\alpha)$.
Integrating the function
\[
z\mapsto e^{\ii(n+1)z}\mathcal{C}_{k}(z,\alpha)
\]
over the boundary of this parallelogram and taking into account that the integrand is a $2\pi$-periodic function,
hence the integrals over the lateral sides cancel each other, we obtain
\begin{equation}
 \frac{1}{2\pi}\left(\int_{-\pi}^{\pi}+\int_{\pi+2\pi\ii \frac{K'(\alpha)}{K(\alpha)}}^{-\pi+2\pi\ii \frac{K'(\alpha)}{K(\alpha)}}\right)e^{\ii(n+1)z}\mathcal{C}_{k}(z,\alpha)\dd z=
 \ii\Res\left(e^{\ii(n+1)z}\mathcal{C}_{k}(z,\alpha), z=\ii\pi \frac{K'(\alpha)}{K(\alpha)} \right)\!,
\label{eq:comp_gamma_n_k_inproof}
\end{equation}
for the function $\mathcal{C}_{k}(\cdot,\alpha)$ has the only singularity within the parallelogram located at $z=\ii\pi \frac{K'(\alpha)}{K(\alpha)}$.
Next, we parametrize the complex line segment in the second integral on the LHS of~\eqref{eq:comp_gamma_n_k_inproof} such that $z=t+2\pi\ii K'(\alpha)/K(\alpha)$ where $-\pi\leq t \leq\pi$.
Taking further into account that
\[
 \mathcal{C}_{k}\left(t+2\pi\ii \frac{K'(\alpha)}{K(\alpha)},\alpha\right)=-e^{\pi K'(\alpha)/K(\alpha)}\mathcal{C}_{k}(t,\alpha),
\]
as one deduces with the aid of \eqref{eq:sn.cn.id.2iK'}, the equation~\eqref{eq:comp_gamma_n_k_inproof} can be written as
\[
 (1+q^{2n+1})\gamma_{n}(k)=\ii\Res\left(e^{\ii(n+1)z}\mathcal{C}_{k}(z,\alpha), z=\ii\pi \frac{K'(\alpha)}{K(\alpha)}\right)\!,
\]
where we have substituted for the nome $q=\exp(-\pi K'(\alpha)/K(\alpha))$.
Note the singularity of the function $\mathcal{C}_{k}(\cdot,\alpha)$ at $z=\ii\pi K'(\alpha)/K(\alpha)$  is
a pole of order $2k+1$. Thus, using identities \eqref{eq:sn.cn.id.iK'} in the second step, one gets
\begin{align}
&\Res\left(e^{\ii(n+1)z}\mathcal{C}_{k}(z,\alpha), z=\ii\pi \frac{K'(\alpha)}{K(\alpha)}\right)\\
&\qquad \qquad =
\frac{1}{(2k)!}\frac{\dd^{2k}}{\dd z^{2k}}\bigg|_{z=\ii\pi \frac {K'(\alpha)}{K(\alpha)}} \left(z-\ii\pi\frac{K'(\alpha)}{K(\alpha)}\right)^{\!2k+1}\!\!e^{\ii(n+1)z}\mathcal{C}_{k}(z,\alpha)
\\
&\qquad \qquad =-\frac{\ii q^{n+\frac 12}}{\alpha^{2k+1}(2k)!}\frac{\dd^{2k}}{\dd z^{2k}}\bigg|_{z=0}e^{\ii(n+ \frac 12)z}\dn\left(\frac{K(\alpha)z}{\pi},\alpha\right)\left[\frac{z}{\sn\left(\frac{K(\alpha)z}{\pi},\alpha\right)}\right]^{2k+1}\!.
\end{align}
Consequently, we arrive at the formula
\[
 \gamma_{n}(k)=\frac{\pi}{K(\alpha)}\frac{q^{n+\frac 12}}{1+q^{2n+1}}\frac{1}{\alpha^{2k+1}(2k)!}\,
 \frac{\dd^{2k}}{\dd u^{2k}}\bigg|_{u=0}\!\!e^{\ii(n+ \frac 12)\pi \frac u{K(\alpha)} }\dn\left(u,\alpha\right)\left[\frac{u}{\sn\left(u,\alpha\right)}\right]^{2k+1}.
\]
Now, it suffices to apply identities \eqref{eq:eigenvec_via_gamma_delta}, \eqref{eq:eigenvec_normalization_1stto1} and 
\eqref{eq:eigenvec_v_1} to obtain \eqref{eq:Rodriguez_odd} with $z=\lambda_n$.

The second identity \eqref{eq:Rodriguez_even} is to be verified in a similar way. This time one deduces that 
\begin{align}
 \delta_{n}(k)& =-\frac{1}{2\pi}\int_{-\pi}^{\pi}e^{\ii(n+1)s}\,\mathcal{D}_{k}(s,\alpha)\dd s & 
\\
&=-\frac{\pi}{K(\alpha)}\frac{q^{n+ \frac 12}}{(1+q^{2n+1})}\frac{1}{\alpha^{2k+1}(2k+1)!}
\\& \quad \times \,
 \frac{\dd^{2k+1}}{\dd u^{2k+1}}\bigg|_{u=0}
\! \!   e^{\ii(n+\frac 12)\pi \frac{u}{K(\alpha)}}\cn\left(u,\alpha\right)
 \left[\frac{u}{\sn\left(u,\alpha\right)}\right]^{2k+2}\!.&  \qedhere
\end{align}
\end{proof}

Next, we derive some generating functions formulas for sequences $\gamma_{N}(k)$ and $\delta_{N}(k)$ with $N$ fixed.
They may be deduced from the result of Carlitz, see \cite[Eqs. (7.8), (7.9)]{carlitz_dmj60}, 
although the formulas there are treated rather as formal series, no comment on the convergence is given and 
$0<\alpha<1$ is assumed.

\begin{prop}\label{prop:gener_func_gamma_delta}
 If $0<|\alpha|<1$, then for $N\in\Z$ and $t$ from a neighborhood of the real line, one has
 \begin{align}
  \cn\left(\frac{K(\alpha)}{\pi}t,\alpha\right)\sum_{k=0}^{\infty}\gamma_{N}(k)\alpha^{2k}\sn^{2k}\left(\frac{K(\alpha)}{\pi}t,\alpha\right) &=
  \frac{\pi}{\alpha K(\alpha)}\frac{q^{N+ \frac 12}}{1+q^{2N+1}}\cos\left(\left(N+ \frac 12\right)t\right)\!, 
\\
  \dn\left(\frac{K(\alpha)}{\pi}t,\alpha\right)\sum_{k=0}^{\infty}\delta_{N}(k)\alpha^{2k}\sn^{2k+1}\left(\frac{K(\alpha)}{\pi}t,\alpha\right) &=
  \frac{\ii\pi}{\alpha K(\alpha)}\frac{q^{N+ \frac12}}{1+q^{2N+1}}\sin\left( \left(N+ \frac 12\right)t\right)\!.
 \end{align}
 
\end{prop}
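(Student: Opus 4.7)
The plan is to establish both identities by inserting the integral representation of the Fourier coefficients into the left-hand side, interchanging the sum with the integral to produce a closed-form rational expression in Jacobian elliptic functions, rewriting that expression via Jacobi addition formulas, and finally identifying the resulting integral as a Fourier coefficient of a $\cn$-function that can be evaluated by the expansion \eqref{eq:cn_Four_ser}.

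For the first identity, I would start from the definition \eqref{eq:def_gamma_delta} of $\gamma_N(k)$ and pull the sum over $k$ inside the integral to obtain
\[
\cn(v)\sum_{k=0}^{\infty}\gamma_N(k)\alpha^{2k}\sn^{2k}(v)=\frac{1}{2\pi}\int_0^{2\pi}e^{-\ii(N+\tfrac12)s}\,\frac{\cn(u)\cn(v)}{1-\alpha^2\sn^2(u)\sn^2(v)}\,\dd s,
\]
where $u=K(\alpha)s/\pi$ and $v=K(\alpha)t/\pi$. The addition formula
$\cn(u+v)+\cn(u-v)=2\cn(u)\cn(v)/(1-\alpha^2\sn^2(u)\sn^2(v))$
turns the kernel into $\tfrac{1}{2}[\cn(K(s+t)/\pi)+\cn(K(s-t)/\pi)]$. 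Applying the Fourier series \eqref{eq:cn_Four_ser} to both $\cn$-terms and using the product-to-sum identity $\cos(A-B)+\cos(A+B)=2\cos A\cos B$ with $A=(n+\tfrac12)s$, $B=(n+\tfrac12)t$ reduces the integrand to an absolutely convergent series in $\cos((n+\tfrac12)s)\cos((n+\tfrac12)t)$. Term-by-term integration against $e^{-\ii(N+\tfrac12)s}$ over $[0,2\pi]$ picks out the unique index $n\in\N_0$ for which $n=N$ or $n=-N-1$; the symmetry $q^{-N-1/2}/(1+q^{-2N-1})=q^{N+1/2}/(1+q^{2N+1})$ and the evenness of cosine yield the claimed right-hand side.

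For the second identity the steps are essentially parallel, but based on the companion addition formula
$\cn(u-v)-\cn(u+v)=2\sn(u)\sn(v)\dn(u)\dn(v)/(1-\alpha^2\sn^2(u)\sn^2(v))$.
Swapping sum and integral gives precisely $\dn(u)\sn(u)\dn(v)\sn(v)/(1-\alpha^2\sn^2(u)\sn^2(v))$ in the integrand, which equals $\tfrac12[\cn(K(s-t)/\pi)-\cn(K(s+t)/\pi)]$; then the identity $\cos(A-B)-\cos(A+B)=2\sin A\sin B$ plays the role that $\cos(A-B)+\cos(A+B)=2\cos A\cos B$ played above, and the orthogonality of the trigonometric system on $[0,2\pi]$ extracts the sine term on the right-hand side.

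The main technical point, and the one place care is needed, is justifying the interchange of summation and integration and specifying the domain of $t$ in which the identities hold. By Lemma \ref{lem:min}, $\sup_{s\in[0,2\pi]}|\sn(K(\alpha)s/\pi,\alpha)|=1$, so for real $t$ and $|\alpha|<1$ one has $|\alpha^2\sn^2(K(\alpha)s/\pi,\alpha)\sn^2(K(\alpha)t/\pi,\alpha)|\leq |\alpha|^2<1$ uniformly in $s$; this makes the geometric series absolutely and uniformly convergent on $[0,2\pi]$ and legitimizes the swap by dominated convergence. By continuity of $\sn(K(\alpha)\,\cdot\,,\alpha)$, the bound $|\alpha\sn(K(\alpha)t/\pi,\alpha)|<1$ persists on an open complex neighborhood of $\R$, which yields the stated domain for $t$; both sides are then analytic in $t$ on that neighborhood, so the identity, once established for real $t$, extends there automatically.
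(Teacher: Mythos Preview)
Your proposal is correct and follows essentially the same route as the paper: insert the integral definition of $\gamma_N(k)$ (resp.\ $\delta_N(k)$), swap sum and integral to produce the closed kernel $\cn(u)\cn(v)/(1-\alpha^2\sn^2 u\,\sn^2 v)$ (resp.\ its $\sn\,\dn$-analogue), apply the addition formula \eqref{eq:cn.add.1} (resp.\ \eqref{eq:cn.add.2}), and read off the answer from the Fourier expansion \eqref{eq:cn_Four_ser}. The only noteworthy difference is in the justification of the interchange: the paper first establishes convergence of the left-hand series by invoking the asymptotics $\gamma_N(k)=O(k^{-3/2})$ from \eqref{eq:eigenvec.asym}--\eqref{eq:eigenvec_via_gamma_delta} together with $|\sn(K t/\pi,\alpha)|<|\alpha|^{-1/2}$, whereas you bypass this and justify Fubini directly via the uniform geometric bound $|\alpha^2\sn^2(Ks/\pi)\sn^2(Kt/\pi)|\leq|\alpha|^2<1$ coming from Lemma~\ref{lem:min}. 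Your argument is slightly more self-contained in that it does not rely on the saddle-point asymptotics of Proposition~\ref{prop:CD_asym}; the paper's argument, on the other hand, makes the analyticity of the left-hand side in $t$ explicit from the outset. In the final evaluation you use product-to-sum trigonometric identities and orthogonality, while the paper equivalently shifts the integration variable and identifies the integral $\int_0^{2\pi}e^{-\ii(N+\frac12)s}\cn(K(s+t)/\pi)\,\dd s$ directly as a Fourier coefficient; these are two phrasings of the same computation.
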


\begin{proof}
We prove in detail the first formula.
By Lemma \ref{lem:min} and the analyticity of the function $t \mapsto \sn\left(\frac{K(\alpha)}{\pi}t,\alpha\right)$ on a neighborhood of $\R$, 
 there is an open set $U\subset\C$ such that $\R\subset U$ and
 \[
   \left|\sn\left(\frac{K(\alpha)}{\pi}t,\alpha\right)\right|<\frac{1}{\sqrt{|\alpha|}}, \quad t \in U.
 \]
Since, in addition, by \eqref{eq:eigenvec.asym} and $\eqref{eq:eigenvec_via_gamma_delta}$,
$\gamma_{N}(k)=O\left(k^{-3/2}\right)$ as $k\to\infty$. Hence, for $0<|\alpha|<1$, the series on the LHS of the first generating formula converges locally uniformly in $U$.
 
By using the definition \eqref{eq:def_gamma_delta} of $\gamma_{N}(k)$, interchanging the sum and integral and summing up, one arrives at
\begin{align}
&\cn\left(\frac{K(\alpha)}{\pi}t,\alpha\right)\sum_{k=0}^{\infty}\gamma_{N}(k)\alpha^{2k}\sn^{2k}\left(\frac{K(\alpha)}{\pi}t,\alpha\right) =
\\
& \qquad\int_{0}^{2\pi} \frac{e^{-\ii(N+ \frac 12)s}}{2 \pi} \frac{\cn\left(\frac{K(\alpha)}{\pi}t,\alpha\right)\cn\left(\frac{K(\alpha)}{\pi}s,\alpha\right)}{1-\alpha^{2}\sn^{2}\left(\frac{K(\alpha)}{\pi}t,\alpha\right)\sn^{2}\left(\frac{K(\alpha)}{\pi}s,\alpha\right)}\dd s. 
\end{align}
Applying the identity \eqref{eq:cn.add.1}, 
one evaluates the integral with the aid of Fourier expansion \eqref{eq:cn_Four_ser} of the function $\cn$, for
 \[
  \int_{0}^{2\pi}e^{-\ii(N+ \frac 12)s}\cn\left(\frac{K(\alpha)}{\pi}\left(t+s\right)\right)\dd s=
  \frac{2\pi^{2}}{\alpha K(\alpha)}\frac{q^{N+ \frac 12}}{1+q^{2N+1}}e^{\ii(N+ \frac12)t}.
 \]
 
The second generating function formula can be obtained similarly, one applies the identity \eqref{eq:cn.add.2} and proceeds analogously.
\end{proof}

\begin{cor}\label{cor:series_gg_dd}
 Let $M,N \in \Z$ and $0<|\alpha|<1$. Then it holds
 \begin{align}
  \sum_{k=0}^\infty \gamma_{N}(k)\gamma_{M}(k)\alpha^{2k}
  &=\frac{\pi}{2\alpha K(\alpha)}\frac{q^{N+ \frac 12}}{1+q^{2N+1}}\delta_{M,N}.
\\
  \sum_{k=0}^\infty \delta_{N}(k)\delta_{M}(k)\alpha^{2k}
  &=\frac{\pi}{2\alpha K(\alpha)}\frac{q^{N+ \frac 12}}{1+q^{2N+1}}\delta_{M,N}.
\end{align}
\end{cor}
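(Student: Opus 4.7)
The plan is to Fourier-test both generating function identities of Proposition \ref{prop:gener_func_gamma_delta} against the exponential $\tfrac{1}{2\pi}e^{-\ii(M+\frac12)t}$ on $[0,2\pi]$. For the first identity I would multiply the first formula of Proposition \ref{prop:gener_func_gamma_delta} by this factor and integrate in $t$. Interchanging sum and integral on the LHS, the $k$-th summand simplifies, after absorbing the extra $e^{-\ii t/2}$ into $\mathcal C_k(t,\alpha)$ and invoking the definition \eqref{eq:def_gamma_delta}, to
\[
\gamma_N(k)\alpha^{2k}\cdot\frac{1}{2\pi}\!\int_0^{2\pi}\!\!e^{-\ii(M+\frac12)t}\cn\!\left(\tfrac{K(\alpha)t}{\pi},\alpha\right)\!\sn^{2k}\!\left(\tfrac{K(\alpha)t}{\pi},\alpha\right)\dd t=\gamma_N(k)\gamma_M(k)\alpha^{2k}.
\]
On the RHS I would expand $\cos((N+\tfrac12)t)$ into its two complex exponentials of half-integer frequencies and apply the elementary orthogonality $\tfrac{1}{2\pi}\int_0^{2\pi}e^{\ii m t}\dd t=\delta_{m,0}$ for integer $m$, producing the Kronecker delta structure with the prefactor $\tfrac{\pi}{2\alpha K(\alpha)}\tfrac{q^{N+1/2}}{1+q^{2N+1}}$ as claimed.

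The second identity is derived in exactly the same way from the second formula of Proposition \ref{prop:gener_func_gamma_delta}: the sine on the RHS is expanded into its two exponentials, and the explicit $\ii$ prefactor in the generating function cancels the $1/(2\ii)$ from the sine expansion, reproducing the same Kronecker structure and the same numerical prefactor as in the $\gamma$-case.

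The main technical point will be the justification of the term-by-term integration. On $[0,2\pi]$, the bound $|\alpha|\,|\sn(K(\alpha)t/\pi,\alpha)|^2<1$, already used in the proof of Proposition \ref{prop:gener_func_gamma_delta} via Lemma \ref{lem:min} and analyticity, gives $|\alpha^{2k}\sn^{2k}(K(\alpha)t/\pi,\alpha)|\le|\alpha|^k$ uniformly in $t$, while the decay $\gamma_N(k),\delta_N(k)=O(k^{-1/2})$ coming from \eqref{eq:eigenvec.asym} and \eqref{eq:eigenvec_via_gamma_delta} supplies a $t$-uniform dominating sequence of the form $C|\alpha|^k$ for the series. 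Fubini's theorem (or the Weierstrass $M$-test) then legitimises the swap of sum and integral and completes the argument.
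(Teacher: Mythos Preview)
Your plan is exactly the paper's: multiply the identities of Proposition~\ref{prop:gener_func_gamma_delta} by $(2\pi)^{-1}e^{-\ii(M+1/2)t}$, integrate over $[0,2\pi]$, and recognise the resulting integrals on the LHS as $\gamma_M(k)$ (resp.\ $\delta_M(k)$) via \eqref{eq:def_calC_k}--\eqref{eq:def_gamma_delta}. Your justification of the sum--integral interchange is more explicit than the paper's one-line proof and is correct.

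There is, however, one slip in the RHS step (which the paper's terse proof also glosses over). Writing $\cos((N+\tfrac12)t)=\tfrac12\bigl(e^{\ii(N+1/2)t}+e^{-\ii(N+1/2)t}\bigr)$ and integrating against $(2\pi)^{-1}e^{-\ii(M+1/2)t}$ yields $\tfrac12(\delta_{M,N}+\delta_{M,-N-1})$, not $\tfrac12\delta_{M,N}$ alone; the sine in the second identity likewise gives $\tfrac12(\delta_{M,N}-\delta_{M,-N-1})$. So what the method actually proves is
\[
\sum_{k\ge0}\gamma_N(k)\gamma_M(k)\alpha^{2k}
=\frac{\pi}{2\alpha K(\alpha)}\,\frac{q^{N+1/2}}{1+q^{2N+1}}\bigl(\delta_{M,N}+\delta_{M,-N-1}\bigr),
\]
and the analogous formula with a minus sign for the $\delta$'s. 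This is internally consistent with the symmetries $\gamma_{-N-1}(k)=\gamma_N(k)$ and $\delta_{-N-1}(k)=-\delta_N(k)$ (immediate from $v_k^{(-N-1)}=(-1)^{k+1}v_k^{(N)}$ and \eqref{eq:eigenvec_via_gamma_delta}) and with the invariance of the prefactor under $N\mapsto -N-1$, but it does not reduce to the single $\delta_{M,N}$ unless one restricts to, say, $M,N\ge0$. You should either note this extra term or state the restriction; the later applications in the paper only use $M=N$, where there is no discrepancy.
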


\begin{proof}
 Multiply the first identity in Proposition \ref{prop:gener_func_gamma_delta} by 
 $(2\pi)^{-1}e^{-\ii(M+1/2)t}$ and integrate w.r.t. $t$ from $0$ to $2\pi$. The second 
 formula is to be derived analogously.
\end{proof}

It would be interesting to know whether the set of eigenvectors $\{v^{(N)}\}_{N\in\Z}$ 
forms a basis of $\ell^{2}(\N)$, or not. From this point of view, it is useful 
to have some knowledge on the norm of the eigenprojections 
$\{Q_{N}\}_{N \in \Z}$ corresponding to the eigenvalues $\{\lambda_N\}_{N \in \Z}$. 
Designating the dependence on $\alpha$ in the eigenvectors by writing $v^{(N)}=v^{(N)}(\alpha)$ and observing that $v^{(N)}(\overline{\alpha})$ is the eigenvector of $J^{*}(\alpha)$ corresponding to the eigenvalue
$\overline{\lambda_N}$, we have that 
\begin{equation}\label{PN.def}
Q_{N} =\frac{\left\langle v^{(N)}(\overline{\alpha}), \cdot \right\rangle}{\left\langle v^{(N)}(\overline{\alpha}), v^{(N)}(\alpha) \right\rangle}
v^{(N)}(\alpha).
\end{equation}
Since $\overline{v^{(N)}(\alpha)}=-v^{(N)}(\overline{\alpha})$, one obtains
\[
\|Q_{N}\|=\frac{\|v^{(N)}(\alpha)\|^{2}}{|\left\langle v^{(N)}(\overline{\alpha}), v^{(N)}(\alpha) \right\rangle|}.
\]
Corollary \ref{cor:series_gg_dd} enable us to derive an integral formula for $\|Q_N\|$, nevertheless, it does not give a complete answer on the behavior of $\|Q_N\|$ yet as finding an asymptotic formula for $\|v^{(N)}\|$ as $N \to \infty$ seems to be a not easy task.

\begin{prop}
Let $0<|\alpha|<1$, $N\in\Z$ and $Q_N$ be as in \eqref{PN.def}. Then one has
\begin{equation}
\|Q_N\| = \frac{|\alpha|}{4|K(\alpha)|\pi} \frac{|1+q^{2N+1}|}{|q|^{N+\frac 12}} \|v^{(N)}(\alpha)\|^{2}
\end{equation}
and
 \[
  \|v^{(N)}(\alpha)\|^{2}=\frac{|K(\alpha)|^{2}}{\pi^{2}}\int_{0}^{2\pi}\int_{0}^{2\pi}e^{-\ii(N+ \frac 12)(u+v)}\,
  \frac{c(u) \overline{c(v)} - s(u) \overline{s(v)}  d(u) \overline{d(v)}}
  {1-|\alpha|^{2}s^{2}(u)\overline{s^{2}(v)}} \,\dd u\dd v
 \]
 where we use abbreviations $s(u)=\sn\left(\frac{K(\alpha)}{\pi}u,\alpha\right)$, 
 $c(v)=\cn\left(\frac{K(\alpha)}{\pi}v,\alpha\right)$, etc.
\end{prop}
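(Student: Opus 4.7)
The plan is to reduce both identities to explicit computations using the Fourier-coefficient representation \eqref{eq:eigenvec_via_gamma_delta} of $v^{(N)}$. For the first identity, I start from the expression for $\|Q_N\|$ derived just above the statement and use the relation $\overline{v^{(N)}(\alpha)}=-v^{(N)}(\overline\alpha)$ to rewrite
\[
\langle v^{(N)}(\overline\alpha),v^{(N)}(\alpha)\rangle=-\sum_{k}\bigl(v_k^{(N)}(\alpha)\bigr)^{2}.
\]
Substituting from \eqref{eq:eigenvec_via_gamma_delta} splits this into $-4K(\alpha)^{2}\sum_k\alpha^{2k}\gamma_N(k)^{2}+4K(\alpha)^{2}\sum_k\alpha^{2k}\delta_N(k)^{2}$, and each series is evaluated by Corollary~\ref{cor:series_gg_dd} with $M=N$. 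Taking absolute values yields $|\langle v^{(N)}(\overline\alpha),v^{(N)}(\alpha)\rangle|=\frac{4\pi|K(\alpha)|}{|\alpha|}\cdot\frac{|q|^{N+1/2}}{|1+q^{2N+1}|}$, which when inserted into the quotient formula for $\|Q_N\|$ gives the first stated identity.

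For the double integral formula, I expand
\[
\|v^{(N)}(\alpha)\|^{2}=4|K(\alpha)|^{2}\sum_{k=0}^{\infty}|\alpha|^{2k}\bigl(|\gamma_N(k)|^{2}+|\delta_N(k)|^{2}\bigr)
\]
using \eqref{eq:eigenvec_via_gamma_delta} and rewrite each $|\gamma_N(k)|^{2}=\gamma_N(k)\overline{\gamma_N(k)}$ and $|\delta_N(k)|^{2}=\delta_N(k)\overline{\delta_N(k)}$ as a double Fourier integral over $(u,v)\in[0,2\pi]^{2}$ by applying \eqref{eq:def_gamma_delta} to both factors. Lemma~\ref{lem:min} together with $|\alpha|<1$ guarantees $|\alpha|^{2}|s(u)|^{2}|\overline{s(v)}|^{2}<1$ uniformly on the square of integration, so Fubini permits interchanging the $k$-summation with the double integral; the inner geometric series collapses to the factor $1/(1-|\alpha|^{2}s^{2}(u)\overline{s^{2}(v)})$ and the integrand becomes
\[
e^{-\ii(N+\frac{1}{2})(u-v)}\,\frac{c(u)\overline{c(v)}+d(u)\overline{d(v)}s(u)\overline{s(v)}}{1-|\alpha|^{2}s^{2}(u)\overline{s^{2}(v)}}.
\]

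The final step is the substitution $v\mapsto 2\pi-v$, which brings this expression into the form stated in the proposition. Using the half-period identities $\sn(2K(\alpha)-z)=\sn(z)$, $\cn(2K(\alpha)-z)=-\cn(z)$, $\dn(2K(\alpha)-z)=\dn(z)$ together with $e^{-\ii\pi(2N+1)}=-1$, the exponential acquires an overall factor $-1$ while turning into $e^{-\ii(N+1/2)(u+v)}$, the denominator stays invariant, and the numerator becomes $-(c(u)\overline{c(v)}-s(u)\overline{s(v)}d(u)\overline{d(v)})$; the two sign flips cancel and yield the claimed integrand. The main technical obstacle lies precisely in this last step, namely in verifying that the half-period identities interact correctly with complex conjugation when $\alpha$ is complex; this reduces to noting that each identity is a functional equation in the complex argument, so $\overline{\sn(2K(\alpha)-z,\alpha)}=\sn(2K(\overline\alpha)-\overline z,\overline\alpha)$ and likewise for $\cn,\dn$, after which the remaining bookkeeping is routine.
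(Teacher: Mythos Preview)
Your overall strategy matches the paper's: both use~\eqref{eq:eigenvec_via_gamma_delta} together with Corollary~\ref{cor:series_gg_dd} to evaluate the pairing, and obtain the double integral by inserting the Fourier integrals~\eqref{eq:def_gamma_delta} into the norm and summing the resulting geometric series. For the double integral your argument is correct; the paper's route is only marginally shorter, writing $\|v^{(N)}(\alpha)\|^{2}=-\sum_n v_n^{(N)}(\alpha)\,v_n^{(N)}(\overline\alpha)$ and substituting~\eqref{eq:eigenvec_via_gamma_delta} with moduli $\alpha$ and $\overline\alpha$ in the two factors, which yields the exponent $e^{-\ii(N+\frac12)(u+v)}$ and the minus in the numerator directly, without your final substitution $v\mapsto 2\pi-v$.

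There is, however, a genuine inconsistency in your computation of the pairing. First, since $(v_{2k+1}^{(N)})^{2}=-4K(\alpha)^{2}\alpha^{2k}\gamma_N(k)^{2}$ and $(v_{2k+2}^{(N)})^{2}=4K(\alpha)^{2}\alpha^{2k}\delta_N(k)^{2}$, one gets $-\sum_n(v_n^{(N)})^{2}=4K(\alpha)^{2}\sum_k\alpha^{2k}\gamma_N(k)^{2}-4K(\alpha)^{2}\sum_k\alpha^{2k}\delta_N(k)^{2}$, the opposite of what you wrote. More seriously, with Corollary~\ref{cor:series_gg_dd} taken literally (both the $\gamma$-sum and the $\delta$-sum equal $\tfrac{\pi}{2\alpha K(\alpha)}\tfrac{q^{N+1/2}}{1+q^{2N+1}}$), either version of this difference vanishes, yet the pairing for a simple eigenvalue cannot. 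The resolution is that the addition formula~\eqref{eq:cn.add.2} carries a sign error (check $\alpha=0$: $\cos(u+v)-\cos(u-v)=-2\sin u\sin v$), which propagates through Proposition~\ref{prop:gener_func_gamma_delta} so that the second identity in Corollary~\ref{cor:series_gg_dd} should read $\sum_k\delta_N(k)^{2}\alpha^{2k}=-\tfrac{\pi}{2\alpha K(\alpha)}\tfrac{q^{N+1/2}}{1+q^{2N+1}}$. With that correction (and the sign above fixed), the pairing equals $\tfrac{4K(\alpha)\pi}{\alpha}\tfrac{q^{N+1/2}}{1+q^{2N+1}}$, and your formula for $\|Q_N\|$ follows.
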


\begin{proof}

By Corollary \ref{cor:series_gg_dd} and formulas \eqref{eq:eigenvec_via_gamma_delta}, one computes
\[
\left\langle v^{(N)}(\overline{\alpha}), v^{(N)}(\alpha) \right\rangle=
\frac{4K(\alpha)\pi}{\alpha} \frac{q^{N+1/2}}{1+q^{2N+1}}.
\]
In the RHS of the equality 
 \[
  \|v^{(N)}(\alpha)\|^{2}=-\sum_{n=1}^{\infty}v_{n}^{(N)}(\alpha)v_{n}^{(N)}(\overline{\alpha}),
 \]
substitute by formulas \eqref{eq:eigenvec_via_gamma_delta}, interchange the summation and integrals 
and sum it up.
\end{proof}

We conclude by comparing numerics of pseudospectrum of $J(\alpha)$ for $\alpha =0.5$ and $\alpha= 0.5 \ii$, see Figure~\ref{fig:ps}. 
The plots are computed in Mathematica as the $\log$ of the norm of the inverse of $J(\alpha)-z$ truncated to $1000 \times 1000$ matrix. 
Although all eigenvalues are real in both cases, the pseudospectra have completely different character and they suggest that the eigenvectors 
of $J(\alpha)$ for non-real $\alpha$, $|\alpha|<1$, do not form a Riesz basis (as otherwise the $\varepsilon$-pseudospectrum should be contained
in a  $\kappa \varepsilon$-neighborhood of eigenvalues with some $\kappa>0$, see, e.g.,~\cite{krejcirik_jmp15}). 
More plots of pseudospectra of $J(\alpha)$ with various values of $\alpha$ can be found in Appendix~\ref{app:pseudo}.

\begin{figure}[htb!]
\includegraphics[width=0.75\textwidth]{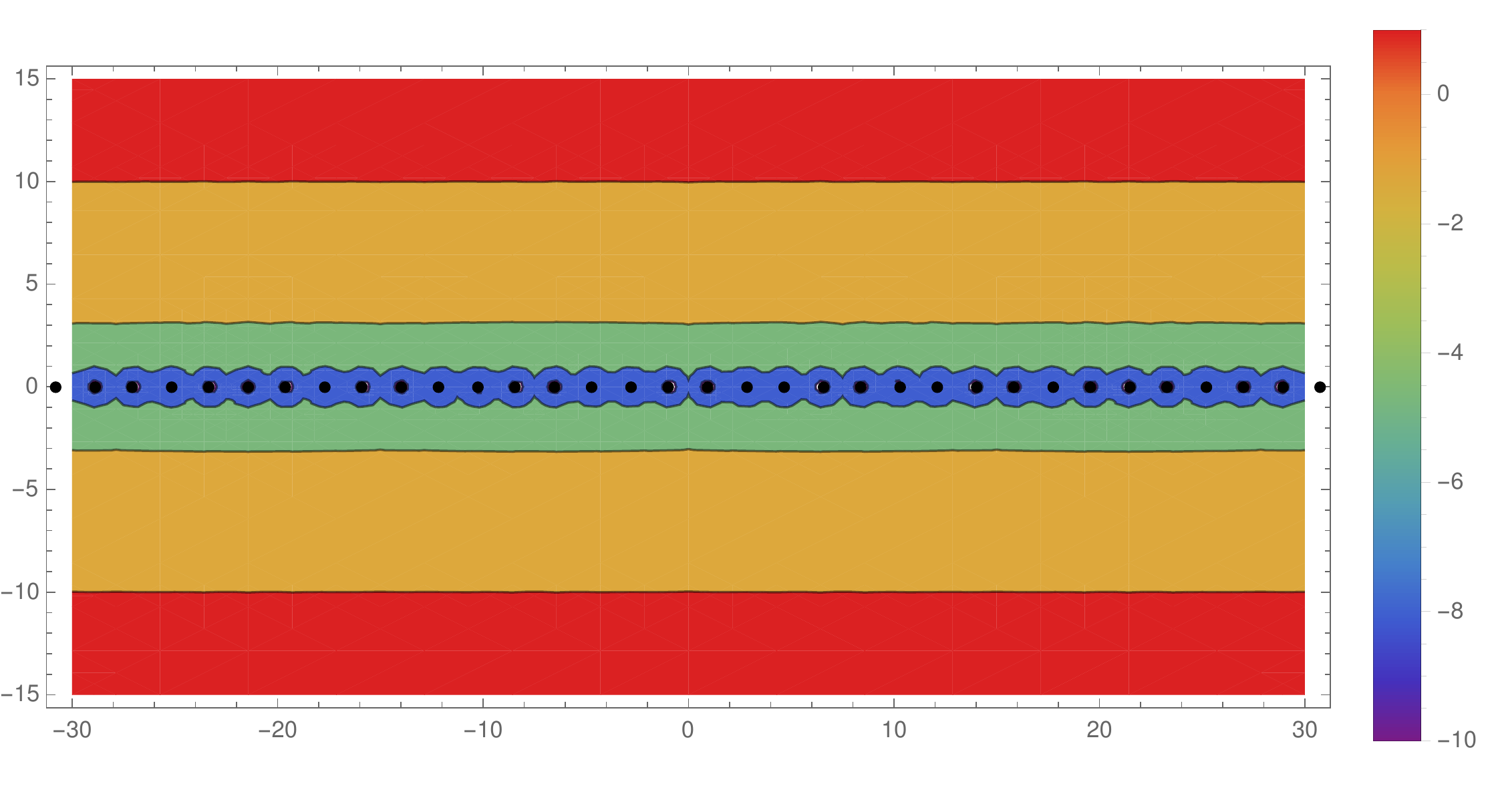} 
\includegraphics[width=0.75 \textwidth]{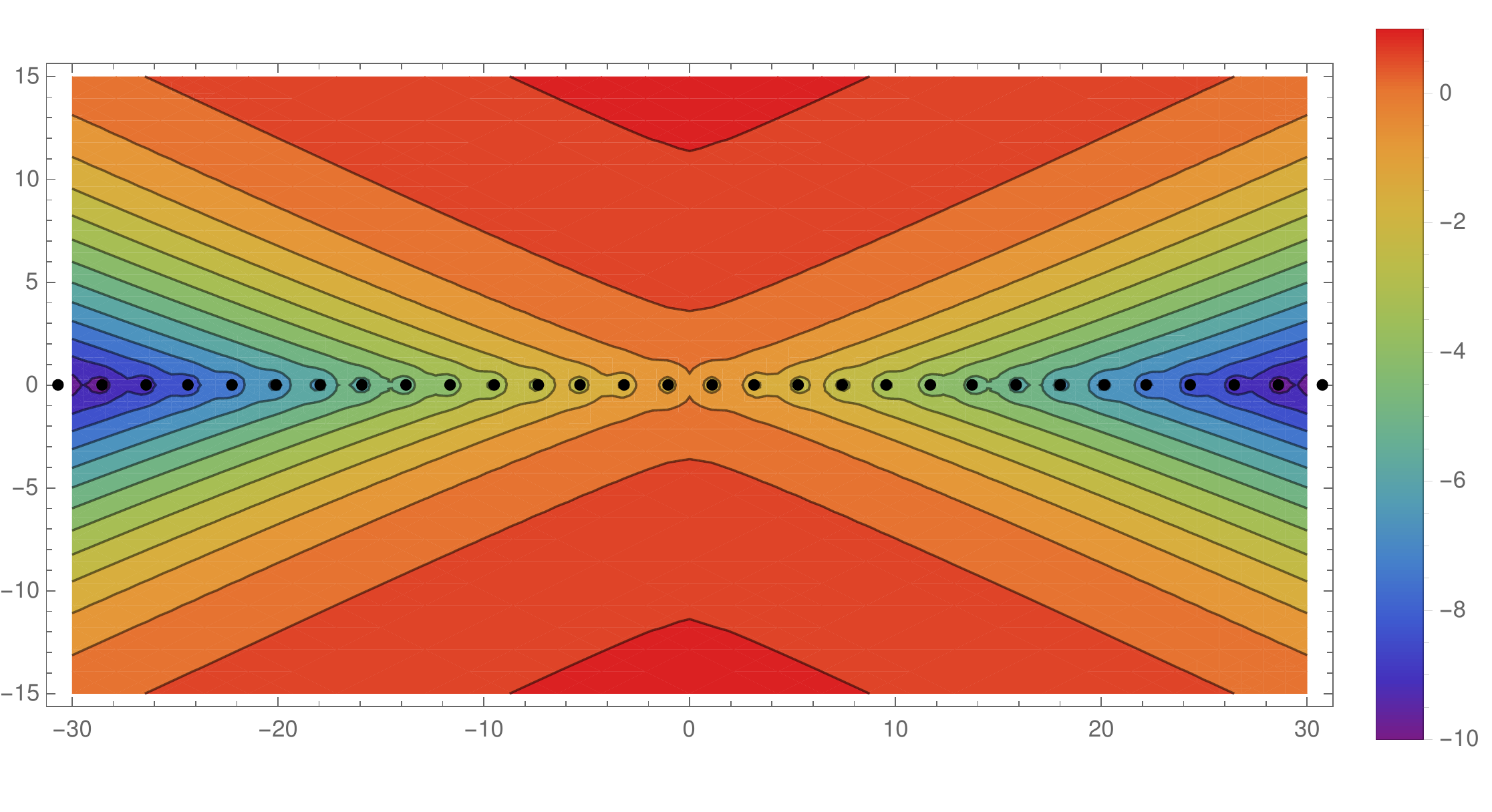}
\caption{Pseudospectra of $J(\alpha)$ for $\alpha=0.5$ (up) and $\alpha=0.5 \ii$ (down).}
\label{fig:ps}
\end{figure}

\section{The case $|\alpha|>1$}\label{sec:case_alp_geq_1}

For the sake of completeness, we describe the spectral properties of the Jacobi operator $J(\alpha)$ also in the case when $|\alpha|>1$.
However, the analysis is very similar to the case $|\alpha|<1$, therefore we provide only final formulas omitting detailed derivations.

In fact, the problem is reformulated using the operator $\tilde{J}(\beta)$ associated with Jacobi matrix 
\begin{equation}
\tilde{\mathcal{J}}(\beta):=\alpha^{-1}\mathcal{J}(\alpha), \quad \beta:=\alpha^{-1}.
\label{eq:def_tildeJ}
\end{equation}
Hence, if $|\alpha|>1$, then $0<|\beta|<1$ and operator $\tilde{J}(\beta)$ has discrete spectrum.

Taking \eqref{eq:def_tildeJ} together with \eqref{eq:powJ_eq_C} one obtains identities
\[
\langle e_{1}, \tilde{J}(\beta)^{2n+1}e_{1}\rangle=0, \quad \langle e_{1}, \tilde{J}(\beta)^{2n}e_{1}\rangle=\beta^{2n}C_{2n}\left(\beta^{-2}\right), \quad n\in\N_0, \ \beta \in \C \setminus\{0\}. 
\]
Thus, for $0< \beta <1$ and taking into account \eqref{eq:dn_tayl}, one derives the analogue of \eqref{eq:expizJ}, namely
\[
  \int_{\mathbb{R}}e^{ixz}\mathrm{d}\tilde{\mu}(x)=\dn(z,\beta),
\]
where $\tilde{\mu}(\cdot):=\langle e_{1}, E_{\tilde{J}}(\cdot)e_{1}\rangle$ where $E_{\tilde{J}}$ stands for the spectral 
measure of $\tilde{J}(\beta)$. The application of the inverse Fourier transform and the formula \eqref{eq:dn_Four_ser} then yields
\[
 \tilde{\mu}(t)=\frac{\pi}{2 K(\beta)}\delta(t)+\frac{\pi}{K(\beta)}\sum_{n=1}^{\infty}\frac{q^{n}}{1+q^{2n}}\left[\delta\left(t-\frac{n\pi }{K(\beta)}\right)+\delta\left(t+\frac{n\pi }{K(\beta)}\right)\right]\!.
\]
%Note that this time $q=q(\beta)$ although we do not designate the dependence on $\beta$ explicitly. 
Consequently, one has $\sigma(\tilde{J}(\beta))=\frac{\pi}{K(\beta)}\Z$ for $0<\beta<1$.

For the corresponding Weyl $m$-function, one derives the Mittag-Leffler expansion
\[
 \tilde{m}(z,\beta):=\langle e_1,(\tilde{J}(\beta)-z)^{-1}e_1\rangle=
-\frac{\pi}{K(\beta)}\sum_{n=-\infty}^{\infty}\frac{q^{n}}{1+q^{2n}}\frac{1}{z-\frac{n\pi}{K(\beta)}}.
\]
which holds true for any $0<|\beta|<1$ and $z\in\rho(\tilde{J}(\beta))$. Consequently,
\[
\sigma(\tilde{J}(\beta))=\frac{\pi}{K(\beta)}\Z, \quad |\beta|<1,
\]
and all eigenvalues are simple.

From equations \eqref{eq:CD_eqs}, it follows that the vector $\tilde{u}=\{\tilde{u}_n\}_{n\in\N}$ defined by formulas
\[
  \tilde{u}_{2k+1}:=\ii(-1)^{k}\beta^{k}e^{\ii K(\beta) z}D_{2k}\left(\ii z,\beta\right),
	\quad
  \tilde{u}_{2k+2}:=(-1)^{k+1}\beta^{k+1}e^{\ii K(\beta) z}C_{2k+1}\left(\ii z,\beta\right), \quad k \in \N_0,
\]
satisfies
\[
 \mathcal{J}(\beta)\tilde{u}=z\tilde{u}-2\ii\sin(K(\beta)z)e_1, \quad z\in\C.
\]
Consequently, vectors $\tilde{v}^{(N)}$, $N\in\Z$, with entries
\[
  \tilde{v}_{2k+1}^{(N)}:=\ii(-1)^{k}\beta^{k}D_{2k}\left(\ii \tilde{\lambda}_{N},\beta\right),
	\quad
  \tilde{v}_{2k+2}^{(N)}:=(-1)^{k+1}\beta^{k+1}C_{2k+1}\left(\ii \tilde{\lambda}_{N},\beta\right), \quad k \in \N_0,
\]
are eigenvectors of $\tilde{J}(\beta)$ corresponding to eigenvalues 
$$\tilde{\lambda}_{N}:=\frac{\pi}{K(\beta)}N, \quad N \in \Z.$$ 
A straightforward
application of formulas \eqref{eq:CD_asym} yields the asymptotic relations
\begin{align*}
\tilde{v}_{2k+1}^{(N)}&=\ii\pi^{1/2}(-1)^{N+k}\frac{\beta^{k}}{k^{1/2}}+O(\beta^{k}k^{-3/2}),& k\to\infty,
\\
\tilde{v}_{2k+2}^{(N)}&=\ii\pi^{1/2}(-1)^{N+k+1}\frac{\tilde{\lambda}_{N}}{2(1-\beta^{2})}\frac{\beta^{k+1}}{k^{3/2}}+O(\beta^{k}k^{-3/2}),& k\to\infty.
\end{align*}

Orthogonal polynomials studied by Carlitz in \cite[\S\S~8]{carlitz_dmj60} are determined recursively by the recurrence rule
\[
 \tilde{P}_{n+1}(x)=x\tilde{P}_{n}(x)-\beta^{2}w_{n-1}^{2}\tilde{P}_{n-1}(x), \quad n\geq2,
\]
with initial conditions $\tilde{P}_{1}(x)=1$ and $\tilde{P}_{2}(x)=x$; sequence $\{w_{n}\}_{n\in\N}$ is as in \eqref{eq:def_seq_w} and $\alpha=\beta^{-1}$.
For the sequence of polynomials $\tilde{p}=\{\tilde{p}_{n}\}_{n \in \N}$ satisfying the eigenvalue equation $\tilde{\mathcal{J}}(\beta)\tilde{p}(x)=x\tilde{p}(x)$ 
and normalized such that $\tilde{p}_{1}(x)=1$, one gets
\[
 \tilde{p}_{n}(x)=\frac{1}{\beta^{\lfloor n/2\rfloor}(n-1)!}\tilde{P}_{n}(x), \quad  n\in\N.
\]
The eigenvectors $\tilde v^{(N)}$ are related to these polynomials by relation
\[
 \tilde{v}_{k}^{(N)}=2\pi\ii\frac{q^{N}}{1+q^{2N}}\ \tilde{p}_{k}(\tilde{\lambda}_{N}), \quad  k\in\N,\ N\in\Z.
\]

For $n\in\N$, one readily verifies that
\[
 \tilde{P}_{n}(z)=\beta^{n-1}P_{n}\left(\beta^{-1}z\right) \quad \mbox{ and } \quad \tilde{p}_{n}(z)=p_{n}\left(\beta^{-1}z\right).
\]
Consequently, identities \eqref{eq:Rodriguez_odd} and \eqref{eq:Rodriguez_even} yield
\begin{align*}
\tilde{p}_{2k+1}(z)&=\frac{(-1)^{k}\beta^{k}}{(2k)!}\,
\frac{\dd^{2k}}{\dd u^{2k}}\bigg|_{u=0}\!\!e^{\ii \frac{zu}{\beta}}\dn\left(u,\beta\right)\left[\frac{u}{\sn\left(u,\beta\right)}\right]^{2k+1}\!,
\\[2mm]
\tilde{p}_{2k+2}(z)&=\frac{\ii(-1)^{k+1}\beta^{k}}{(2k+1)!}\,
\frac{\dd^{2k+1}}{\dd u^{2k+1}}\bigg|_{u=0}\!\!e^{\ii \frac{zu}{\beta}}\cn\left(u,\beta\right)\left[\frac{u}{\sn\left(u,\beta\right)}\right]^{2k+2}\!, \quad k\in\N_0.
\end{align*}

Let us end with the integral formula for the norm of the eigenprojection
\[
 \tilde{Q}_{N} =\frac{\left\langle \tilde{v}^{(N)}(\overline{\beta}), \cdot \right\rangle}{\left\langle \tilde{v}^{(N)}(\overline{\beta}), \tilde{v}^{(N)}(\beta) \right\rangle}\tilde{v}^{(N)}(\beta).
\]
which reads
\[
 \|\tilde{Q}_{N}\| = \frac{1}{4|K(\beta)|\pi} \frac{|1+q^{2N}|}{|q|^{N}} \|\tilde{v}^{(N)}(\beta)\|^{2}
\]
with
 \[
  \|\tilde{v}^{(N)}(\beta)\|^{2}=
  \frac{|K(\beta)|^{2}}{\pi^{2}}\int_{0}^{2\pi}\int_{0}^{2\pi}e^{-\ii N(u+v)}\,\frac{ d(u) \overline{d(v)} - s(u)\overline{s(v)}c(u)\overline{c(v)}}
  {1-|\beta|^{2}s^{2}(u)\overline{s^{2}(v)}} \,\dd u\dd v
 \]
where we use abbreviations $s(u)=\sn\left(\frac{K(\beta)}{\pi}u,\beta\right)$, $c(v)=\cn\left(\frac{K(\beta)}{\pi}v,\beta\right)$, etc.

\section*{Acknowledgments}
The research of P.~S. is supported by the \emph{Swiss National Science Foundation}, SNF Ambizione grant No.\ PZ00P2\_154786.
F.~{\v S}. gratefully acknowledges the kind hospitality of the Mathematisches Institut at 
Universit{\"a}t Bern and in particular of Professor Christiane Tretter; his research was also supported by grant No. GA13-11058S of the Czech Science Foundation. 
We also thank a referee for the very stimulating report, in particular containing plots of pseudospectra of $J(\alpha)$ that we re-computed in Mathematica and included as Appendix~\ref{app:pseudo}.

\appendix
\section{Jacobian elliptic functions}
\label{app:elliptic}

Jacobian elliptic functions are deeply investigated and very well-known. For convenience, 
some of their selected properties, which are used within the paper, are summarized here.
As a primarily source we use \cite{lawden89}, other useful references are 
\cite[Chp.~16]{abramowitz64}, \cite{dlmf22} and \cite{akhiezer90}.

The (copolar) triplet of Jacobian elliptic functions $\sn(u,\alpha)$, $\cn(u,\alpha)$ and $\dn(u,\alpha)$ can be 
defined with the aid of Jacobi's theta functions, see \cite[Eqs.~(2.1.1-3)]{lawden89} (modulus $\alpha$ coincides
with $k$ in the Lawden's notation). Each of these functions is meromorphic in $u$ (for fixed $\alpha$) with simple 
poles and simple zeros and is meromorphic in $\alpha$ (for fixed $u$).
In most applications, the range for the modulus $\alpha$ is restricted to $0<\alpha<1$.
As such, all three functions are real-valued for $u\in\R$.

Taylor series expansions of Jacobian elliptic functions can be written in the form:
\begin{align}
 \sn(u,\alpha)&=\sum_{n=0}^{\infty}(-1)^{n}C_{2n+1}\left(\alpha^{2}\right)\frac{u^{2n+1}}{(2n+1)!},\label{eq:sn_tayl}\\
 \cn(u,\alpha)&=\sum_{n=0}^{\infty}(-1)^{n}C_{2n}\left(\alpha^{2}\right)\frac{u^{2n}}{(2n)!},\label{eq:cn_tayl}\\
 \dn(u,\alpha)&=\sum_{n=0}^{\infty}(-1)^{n}\alpha^{2n}C_{2n}\left(\alpha^{-2}\right)\frac{u^{2n}}{(2n)!}. \label{eq:dn_tayl}
\end{align}
Expansions \eqref{eq:sn_tayl}, \eqref{eq:cn_tayl} and \eqref{eq:dn_tayl} are absolutely convergent for $|\alpha|\leq1$
and $|u|<\pi/2$, see \cite[Thm.~3.2]{walker_rslps03}. 
For $n \in \N$, $C_{n}(x)$ is a polynomial in $x$ of degree $\lfloor (n-1)/2 \rfloor$ with positive integer
coefficients. No explicit formula for polynomials $C_{n}$ is known, although a lot of authors studied them and found various combinatorial interpretations for their coefficients. Let us mention at least 
\cite{flajoletfrancon_ejc89, viennot_jcts80}. Polynomials $C_{n}$ may be computed recursively by formulas
\begin{align}
 C_{2n+1}(x)&=\sum_{j+k=n}\binom{2n}{2j}C_{2j}(x)x^{k}C_{2k}\left(x^{-1}\right),
\\
 C_{2n+2}(x)&=\sum_{j+k=n}\binom{2n+1}{2j+1}C_{2j+1}(x)x^{k}C_{2k}\left(x^{-1}\right), \qquad n \in \N_0,
\end{align}
and $C_{0}(x)=1$. First few polynomials $C_{n}(x)$ read
\begin{align}
  C_{1}(x)&=1, \; C_{3}(x)=1+x, \; C_{5}(x)=1+14x+x^{2}, \; C_{7}(x)=1+135x+135x^{2}+x^{3},
\\
  C_{2}(x)&=1, \; C_{4}(x)=1+4x, \; C_{6}(x)=1+44x+16x^{2}, \; C_{8}(x)=1+408x+912x^{2}+64x^{3}.
\end{align}

Zeros, poles as well as periodicity properties of Jacobian elliptic functions are expressible in terms
of the complete elliptic integral of the first kind
\begin{equation}
 K(\alpha)=\int_{0}^{1}\frac{\mbox{d}t}{\sqrt{(1-t^{2})(1-\alpha^{2}t^{2})}},
\label{eq:def_K}
\end{equation}
where the principle square root is used. As function of $\alpha^{2}$, $K$ is analytic in $\C\setminus[1,\infty)$. 
Note that $K(\alpha)>0$ whenever $\alpha^{2}<1$. The conjugate elliptic integral $K'$ is defined as $K'(\alpha)=K(\alpha')$
where the complementary modulus $\alpha'$ satisfies $\alpha^{2}+\alpha'^{2}=1$. Similarly, as function of $\alpha^{2}$, 
$K'$ is analytic in $\C\setminus(-\infty,0]$. Finally, recall the nome $q(\alpha)=\exp(-\pi K'(\alpha)/K(\alpha))$; the dependence on the modulus $\alpha$ is suppressed in the notation for $q$.
Note that all functions $\sn$, $\cn$, $\dn$, $K$, $K'$ and $q$ are functions of $\alpha^{2}$ rather then $\alpha$.

Fourier series for Jacobian elliptic functions read 
\begin{align}
\sn(u,\alpha)&=\frac{2\pi}{\alpha K(\alpha)}\sum_{n=0}^{\infty}\frac{q^{n+1/2}}{1-q^{2n+1}}\sin\frac{(2n+1)\pi u}{2K(\alpha)},\label{eq:sn_Four_ser}\\
\cn(u,\alpha)&=\frac{2\pi}{\alpha K(\alpha)}\sum_{n=0}^{\infty}\frac{q^{n+1/2}}{1+q^{2n+1}}\cos\frac{(2n+1)\pi u}{2K(\alpha)},\label{eq:cn_Four_ser}\\
\dn(u,\alpha)&=\frac{\pi}{2K(\alpha)}+\frac{2\pi}{K(\alpha)}\sum_{n=1}^{\infty}\frac{q^{n}}{1+q^{2n}}\cos\frac{n\pi u}{K(\alpha)},\label{eq:dn_Four_ser}
\end{align}
where $|\Im(u/K(\alpha))|<\Im(\ii K'(\alpha)/K(\alpha))$.

Finally, we recall some special values, see \cite[Sec.~16.5--16.8]{abramowitz64},
\begin{equation}\label{eq:sn.cn.dn.sv}
\begin{aligned}
\sn(0,\alpha)&=\sn(2K(\alpha),\alpha)=0, \\
\cn(0,\alpha)&=\dn(0,\alpha)=\dn(2K(\alpha),\alpha)=-\cn(2K(\alpha),\alpha)=1,
\end{aligned}
\end{equation}
identities, see \cite[Sec.~16.9]{abramowitz64} and \cite[Sec.~16.8]{abramowitz64},
\begin{equation}\label{eq:dn.sn.cn.id}
\dn^{2}(z,\alpha)+\alpha^{2}\sn^{2}(z,\alpha)=1, \quad 
\sn^{2}(z,\alpha)+\cn^{2}(z,\alpha)=1,
\end{equation}
and
\begin{align}
\sn(u+2K(\alpha),\alpha)&=-\sn(u,\alpha),& \cn(u+2 K(\alpha),\alpha)&=-\cn(u,\alpha),
\label{eq:sn.cn.id.2K}
\\
\sn(u+2\ii K'(\alpha),\alpha)&=\sn(u,\alpha),& \cn(u+2\ii K'(\alpha),\alpha)&=-\cn(u,\alpha),
\label{eq:sn.cn.id.2iK'}
\\
\sn(u+\ii K'(\alpha),\alpha)&=\frac{1}{\alpha\sn(u,\alpha)}, & \cn(u+\ii K'(\alpha),\alpha)=&-\frac{\ii\dn(u,\alpha)}{\alpha\sn(u,\alpha)},
\label{eq:sn.cn.id.iK'}
\end{align}
addition formulas, see \cite[Eqs.~2.4.12, 2.4.14]{lawden89},
\begin{align}
\cn(u+v)+\cn(u-v)&=\frac{2\cn(u)\cn(v)}{1-\alpha^{2}\sn^{2}(u)\sn^{2}(v)}, \label{eq:cn.add.1}\\
\cn(u+v)-\cn(u-v)&=\frac{2\sn(u)\sn(v)\dn(u)\dn(v)}{1-\alpha^{2}\sn^{2}(u)\sn^{2}(v)}, \label{eq:cn.add.2}
\end{align}
and formulas for derivatives, see \cite[Sec.~16.16]{abramowitz64},
\begin{equation}\label{eq:dn.cn.ode}
\begin{aligned}
\frac{\partial}{\partial z}\sn(z,\alpha)&=\cn(z,\alpha)\dn(z,\alpha), 
&\frac{\partial}{\partial z}\cn(z,\alpha)=-\sn(z,\alpha)\dn(z,\alpha),
\\
\frac{\partial}{\partial z}\dn(z,\alpha)&=-\alpha^{2}\sn(z,\alpha)\cn(z,\alpha).&
\end{aligned}
\end{equation}

\section{Pseudospectra of $J(\alpha)$}
\label{app:pseudo}

We investigate numerically the pseudospectra of $J(\alpha)$ for $\alpha$ lying close to the unit circle, see Figure~\ref{fig:ps_circle}, 
and approaching $\ii$ from inside, see~Figure~\ref{fig:ps_im}. The plots suggest that in spite of the reality of the spectrum in some cases, the pseudospectra (and so the basis properties of eigenvectors) crucially depend on the self-adjointness of $J(\alpha)$.

\begin{figure}[htb!]
\includegraphics[width=0.36 \textwidth]{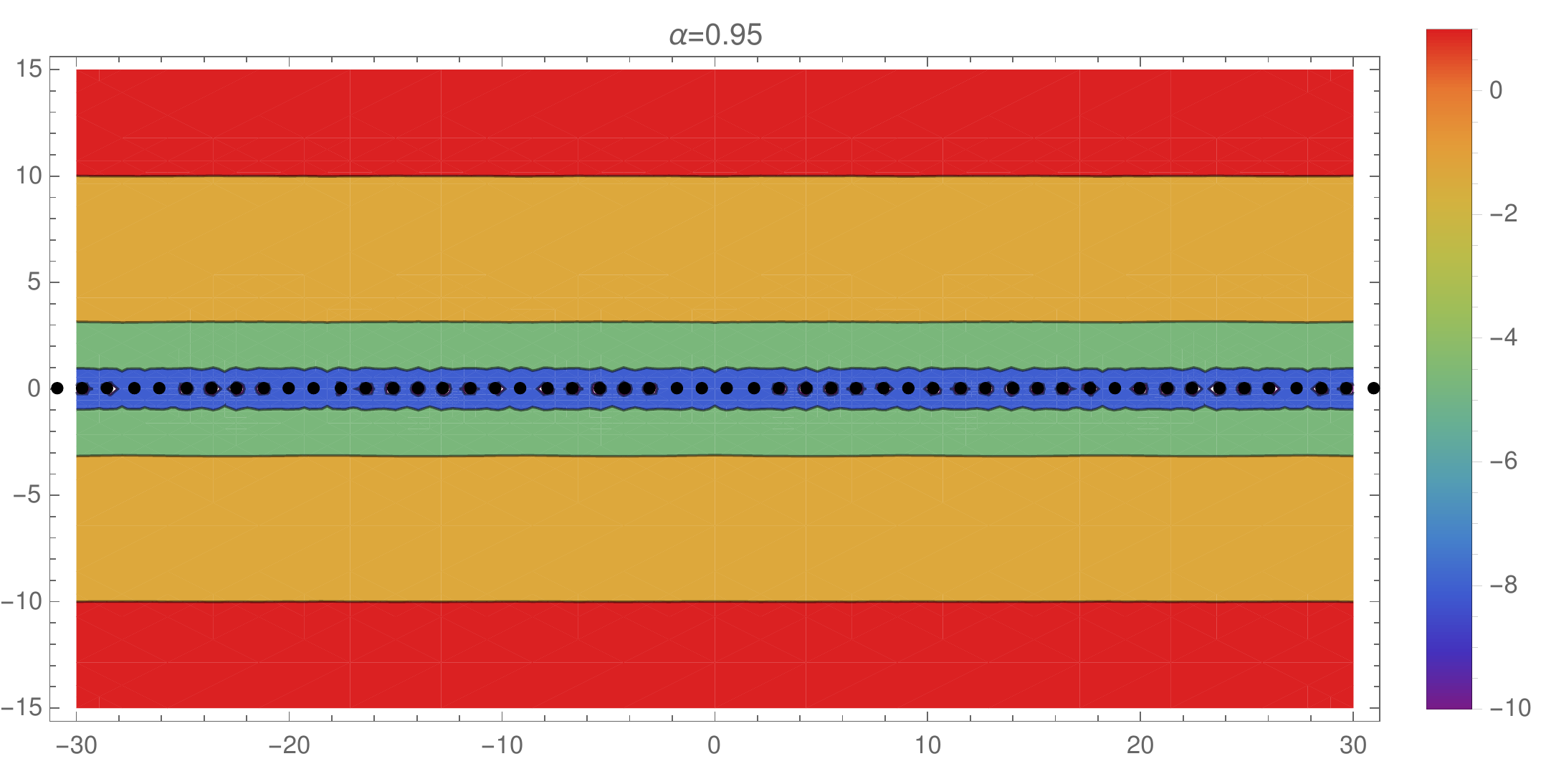}
\qquad
\includegraphics[width=0.36 \textwidth]{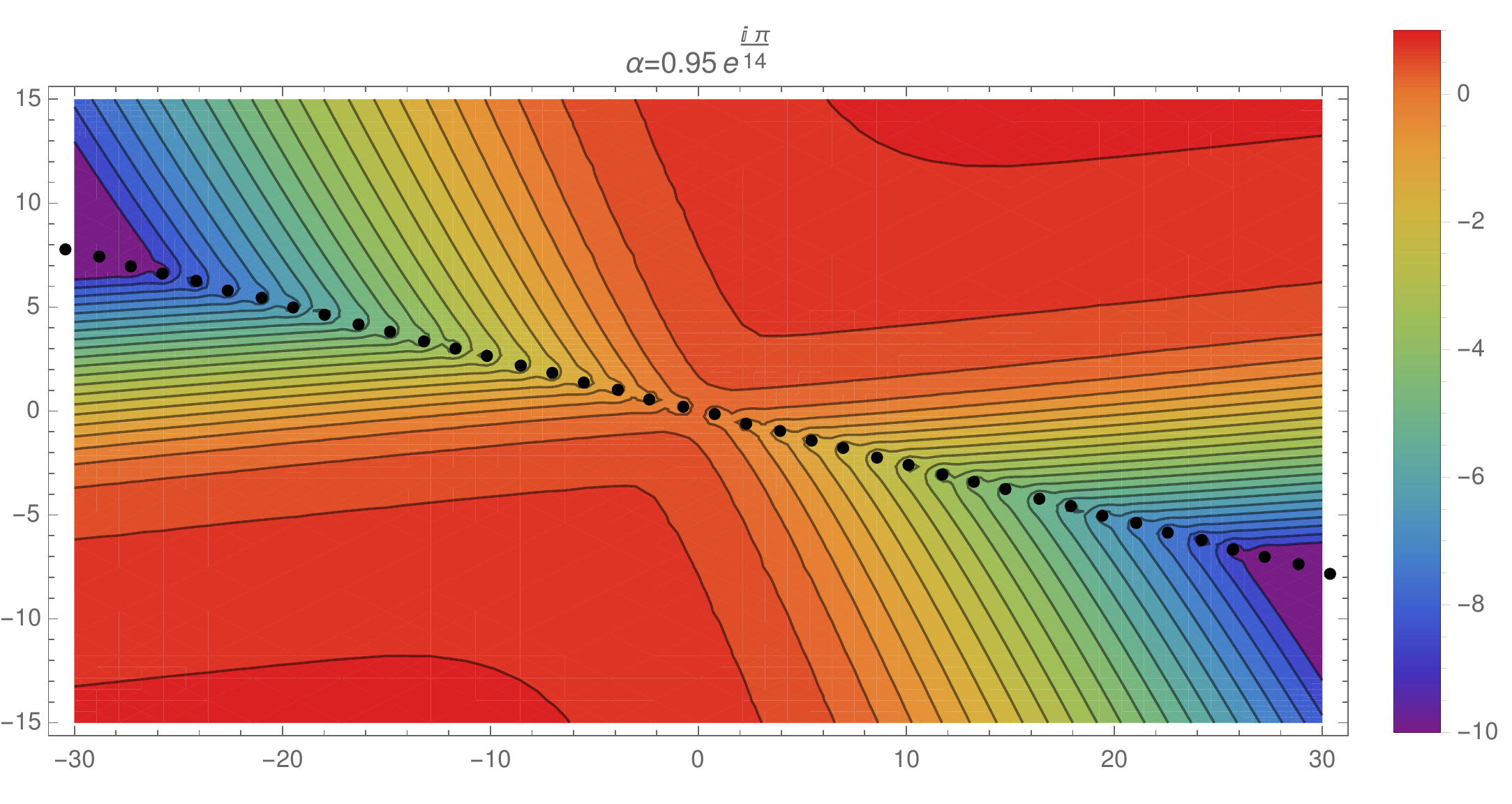}
\\
\includegraphics[width=0.36 \textwidth]{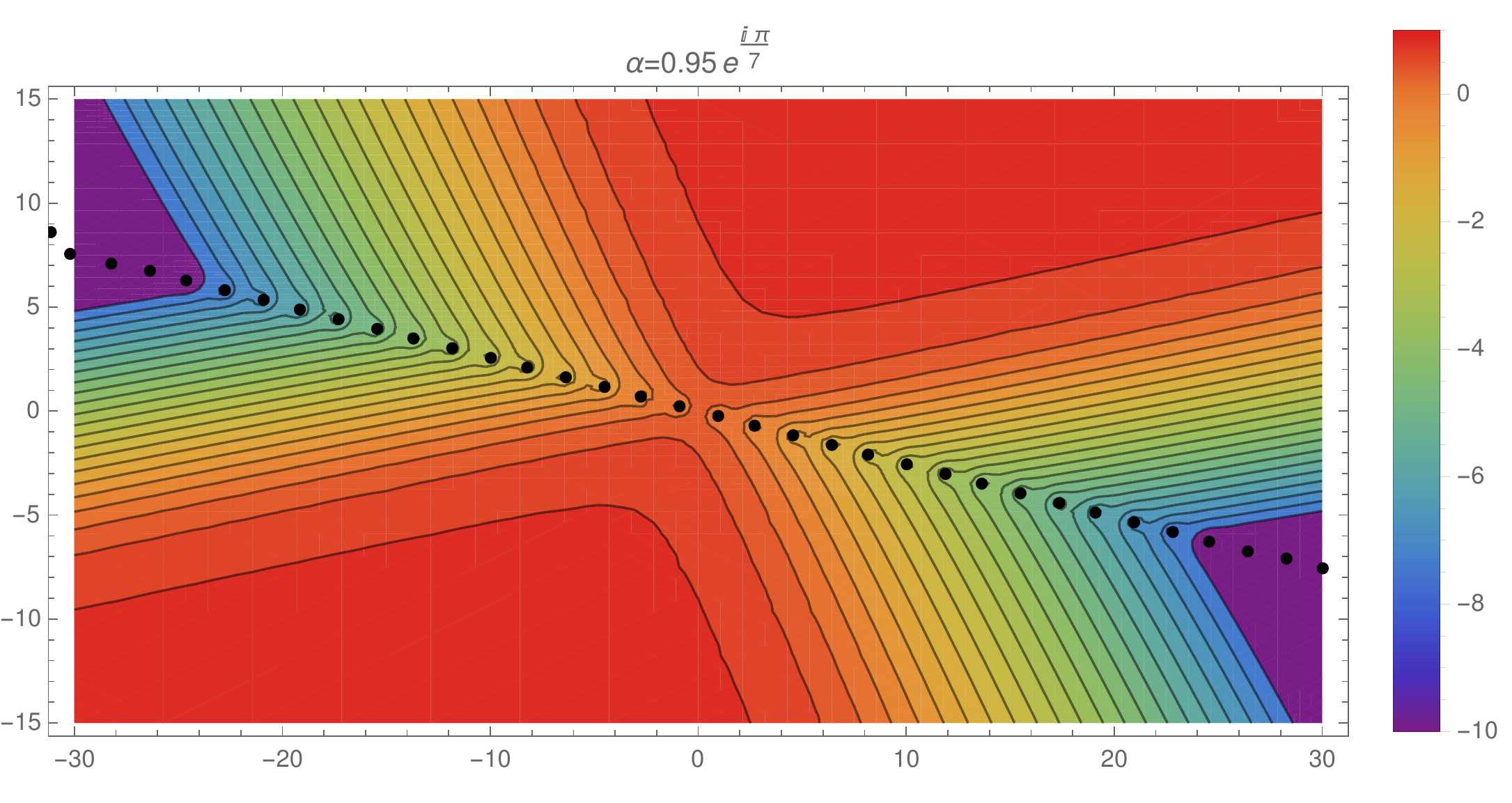}
\qquad
\includegraphics[width=0.36 \textwidth]{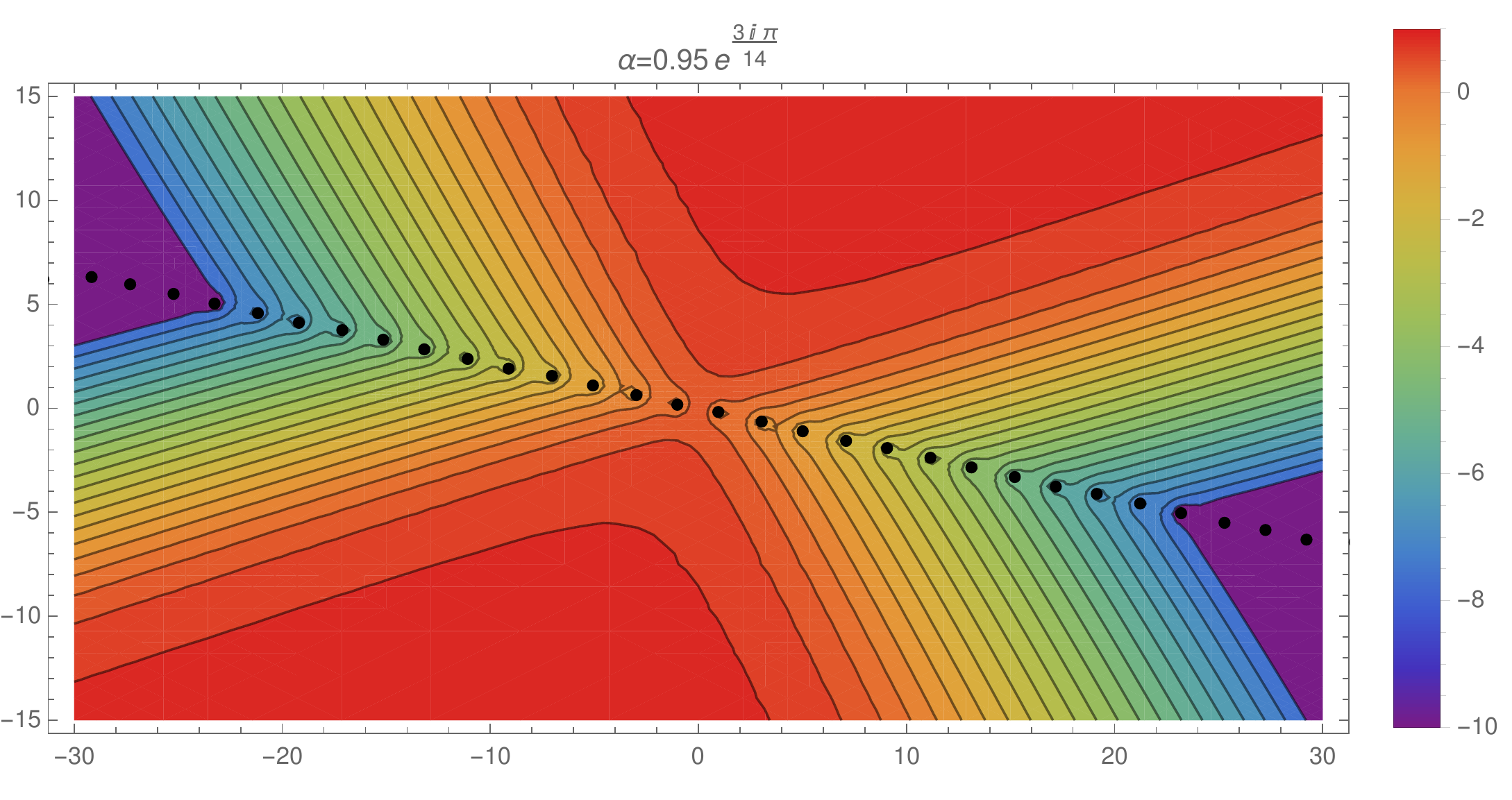}
\\
\includegraphics[width=0.36 \textwidth]{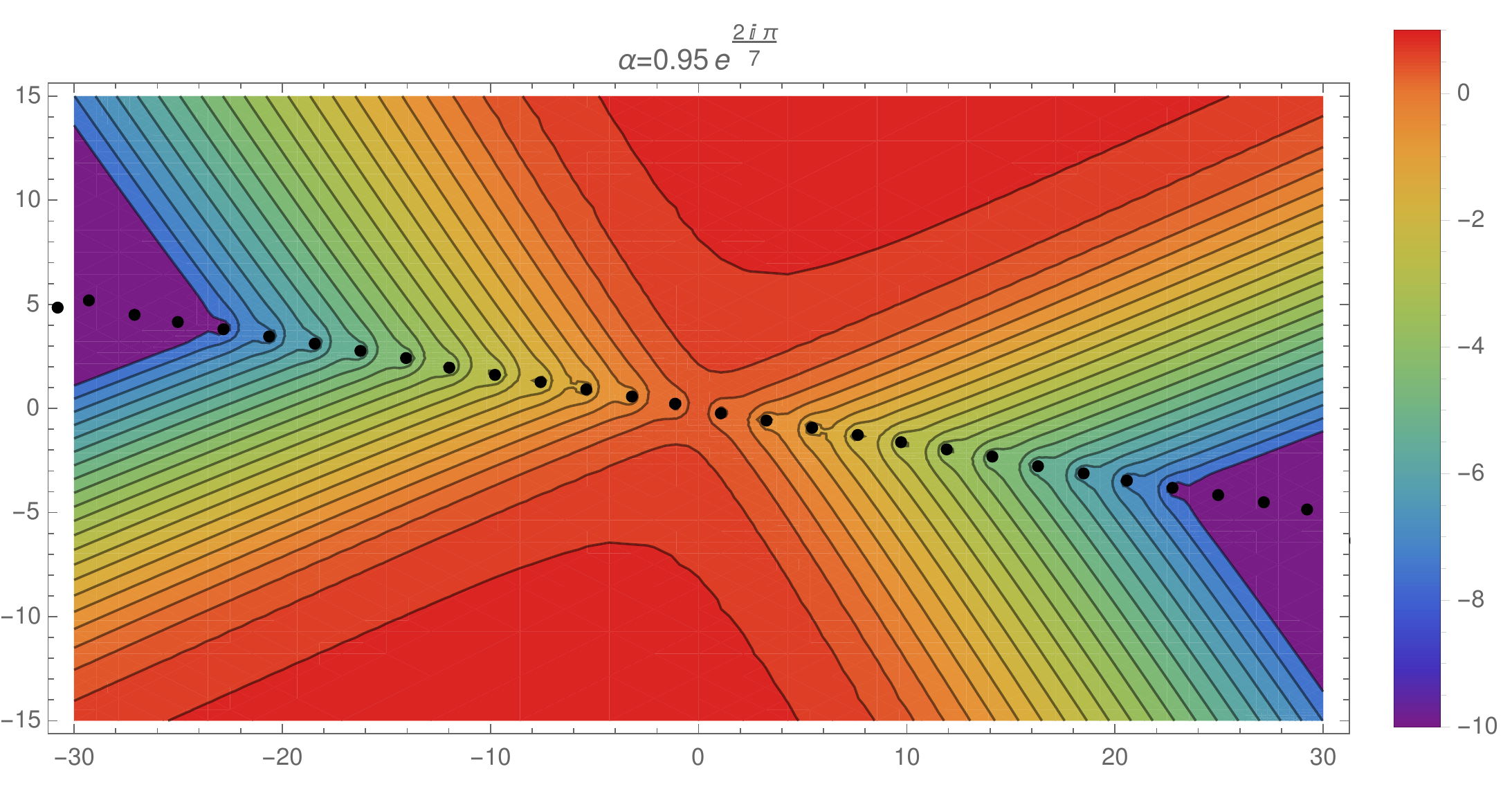}
\qquad
\includegraphics[width=0.36 \textwidth]{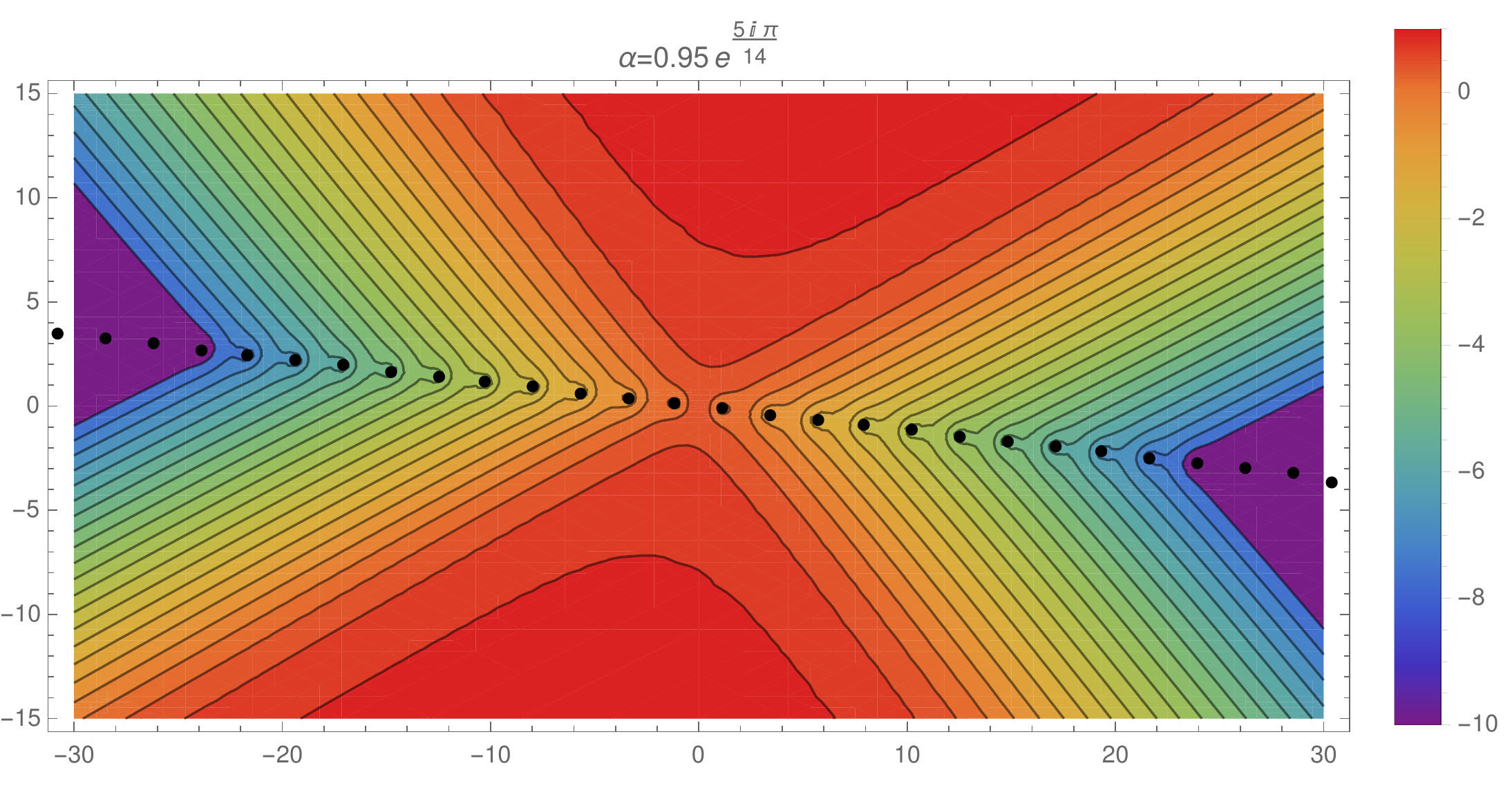}
\\
\includegraphics[width=0.36 \textwidth]{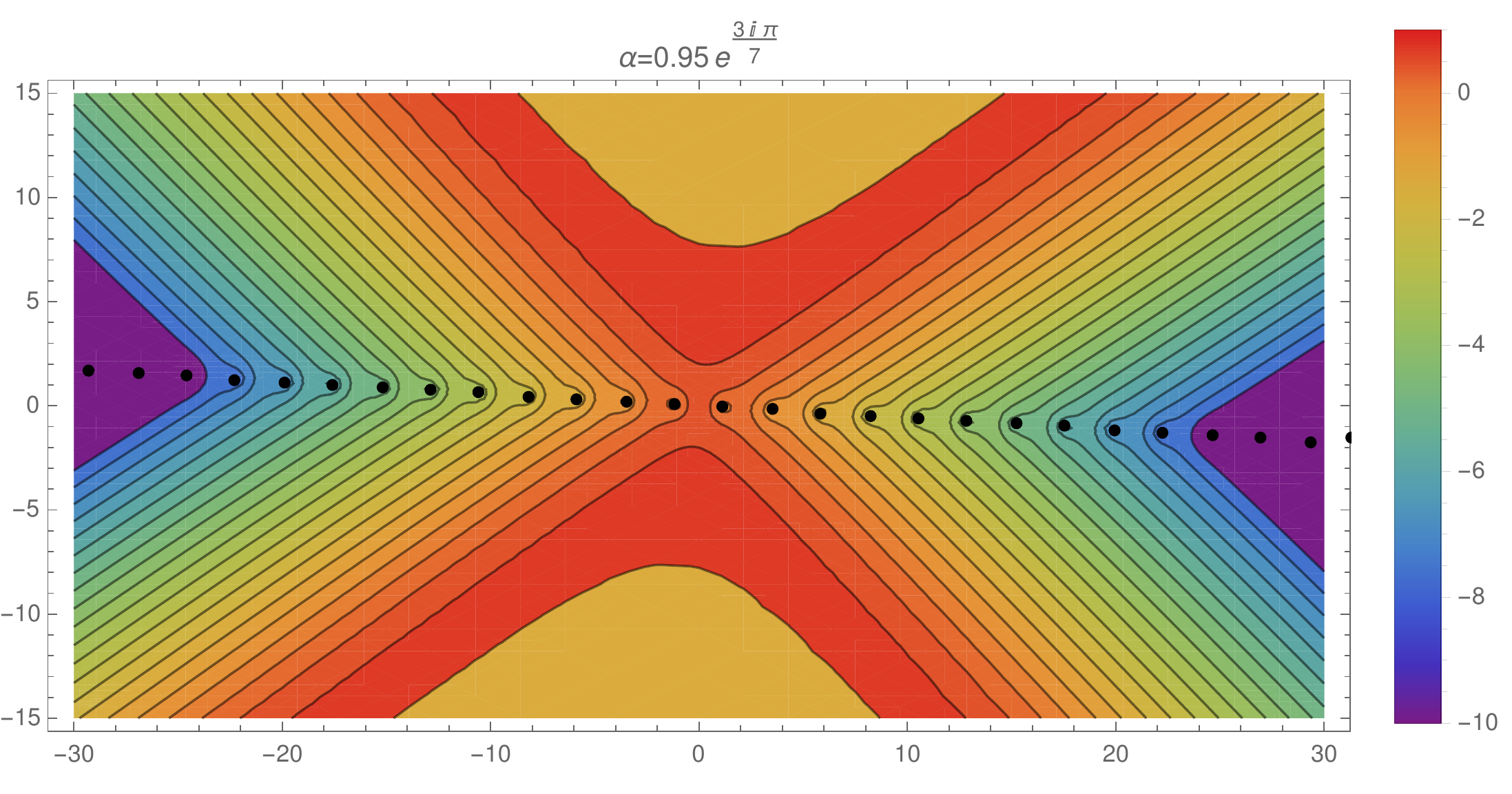}
\qquad
\includegraphics[width=0.36 \textwidth]{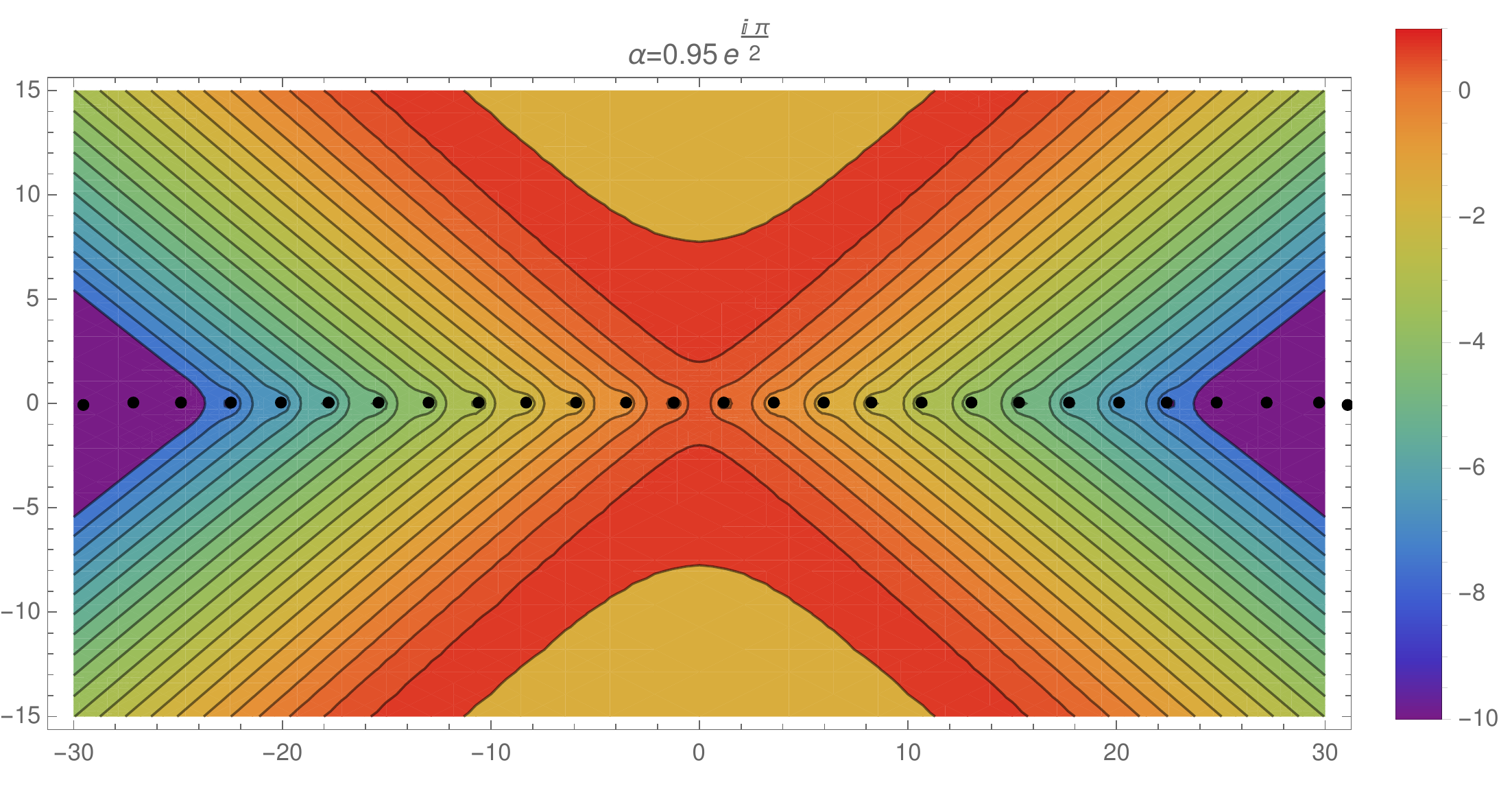}
\caption{Pseudospectra of $J(\alpha)$ with $\alpha$'s lying on the circle with $|\alpha|=0.95$.}
\label{fig:ps_circle}
\end{figure}
\begin{figure}[htb!]
\includegraphics[width=0.36 \textwidth]{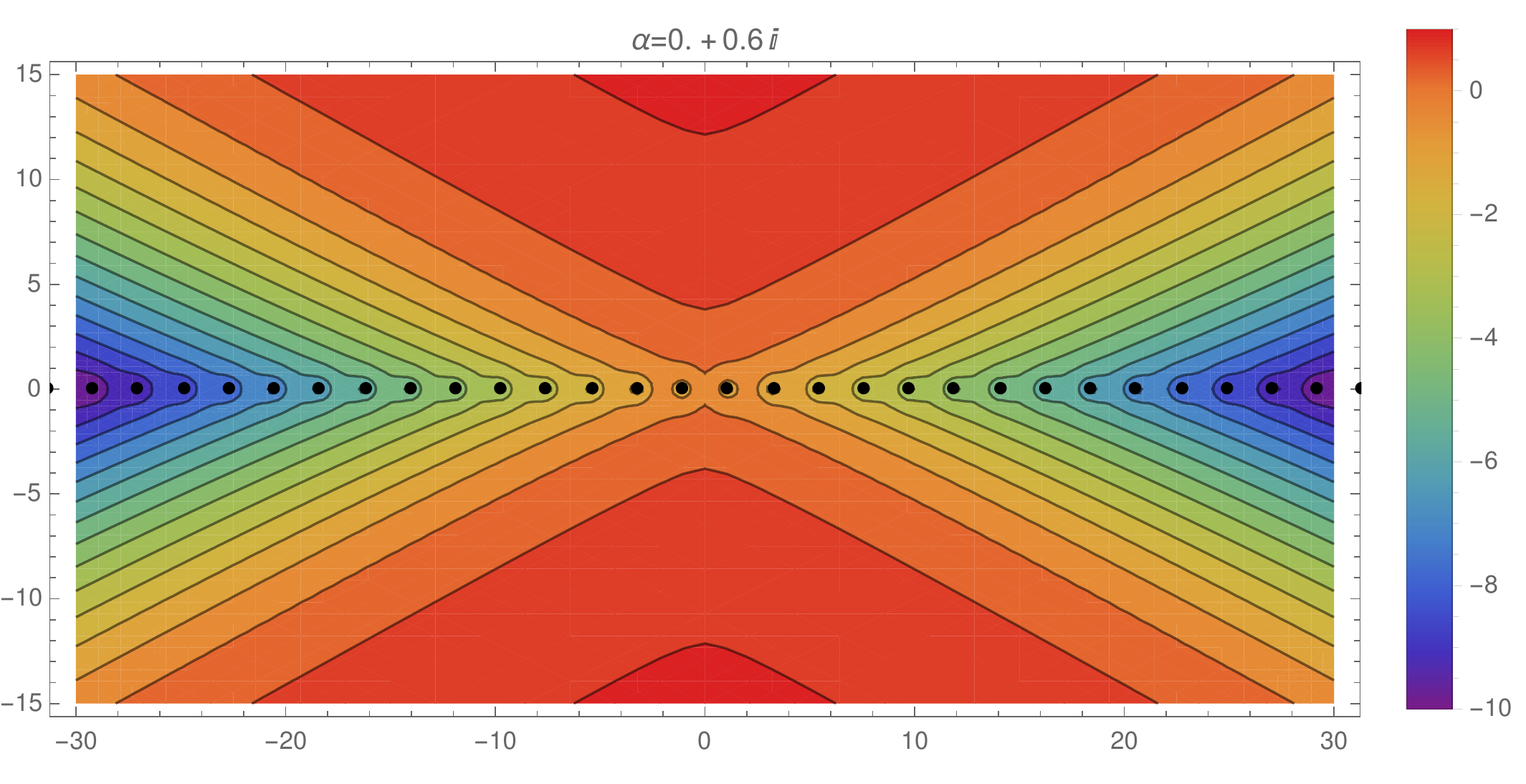}
\qquad
\includegraphics[width=0.36 \textwidth]{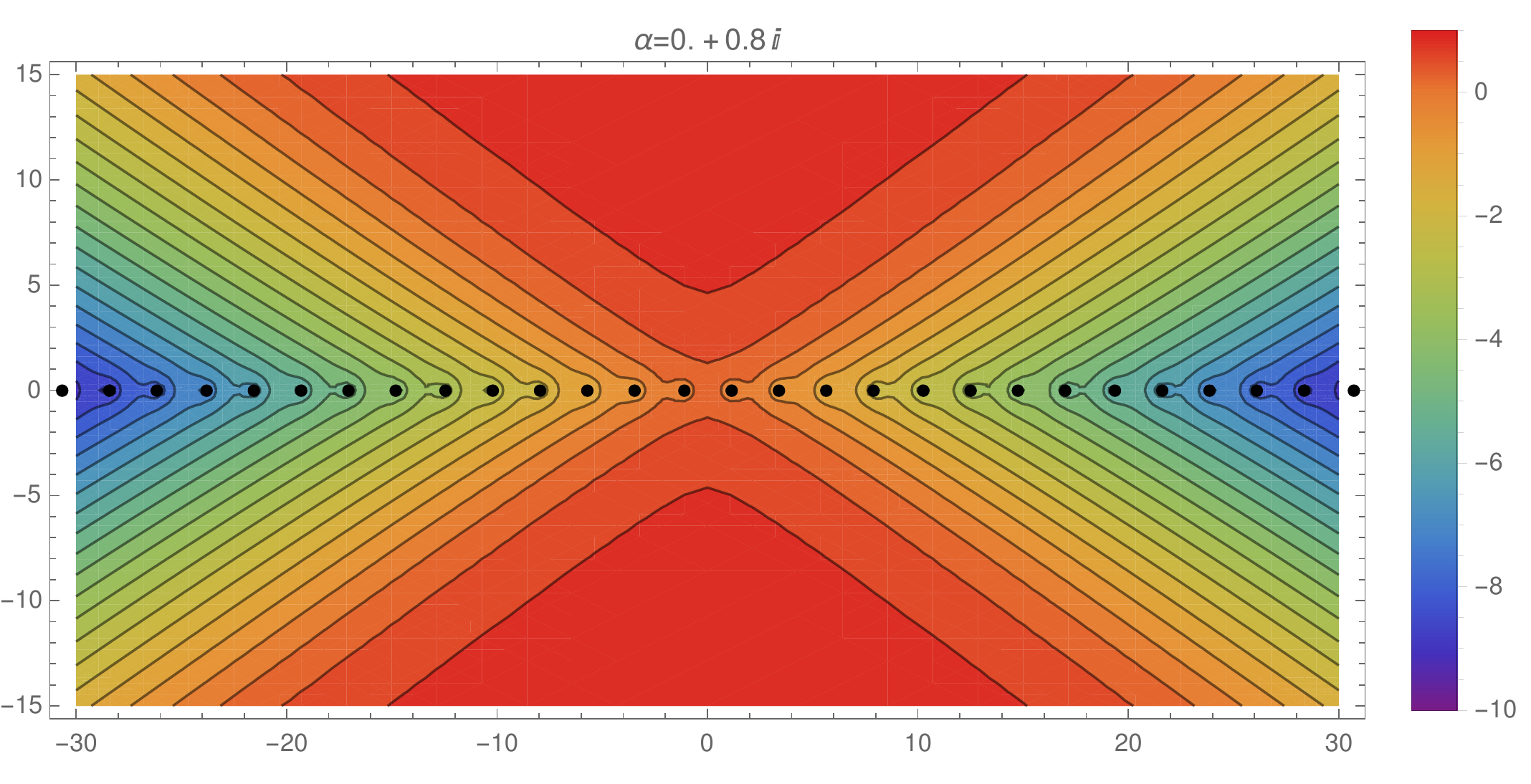}
\\
\includegraphics[width=0.36 \textwidth]{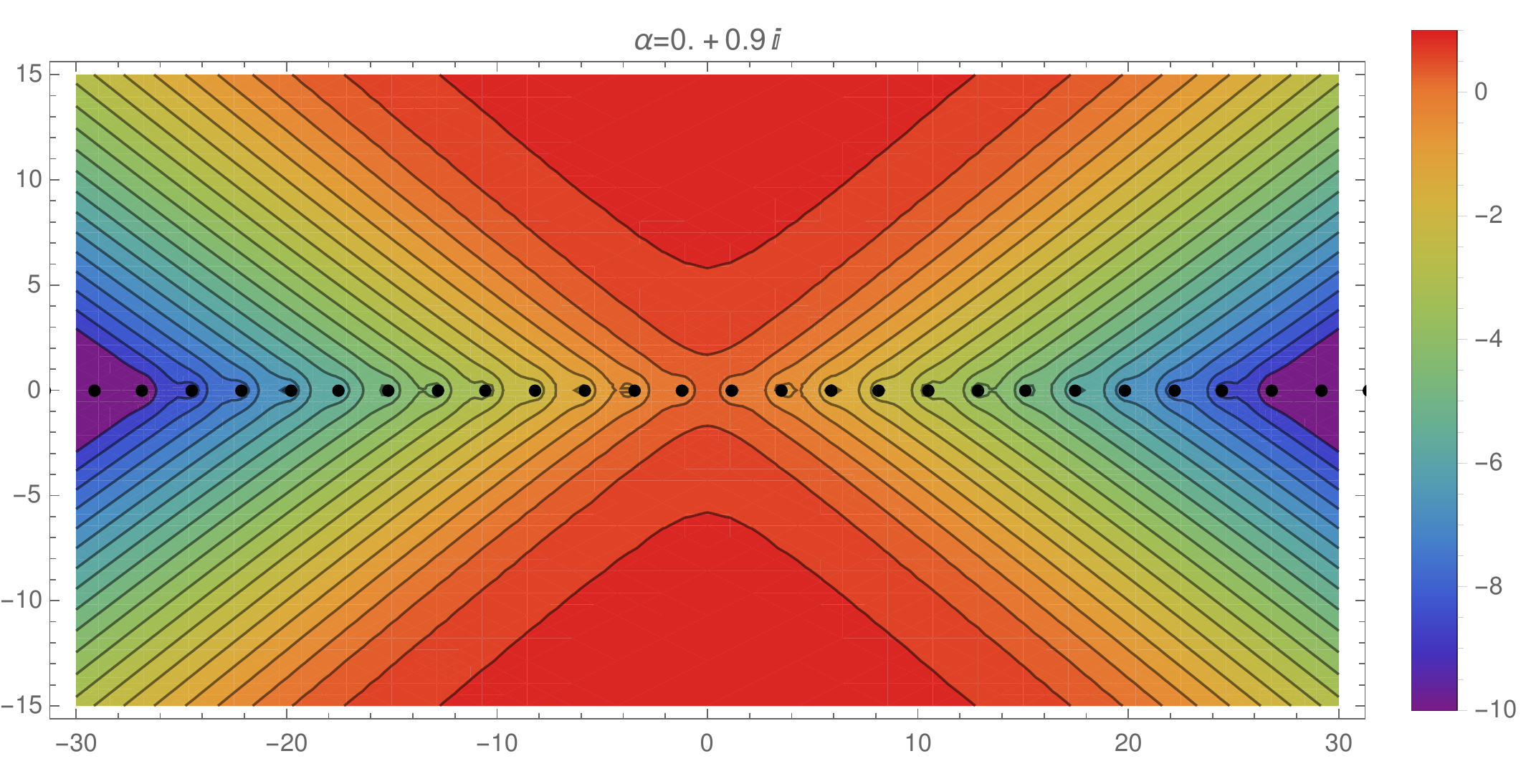}
\qquad
\includegraphics[width=0.36 \textwidth]{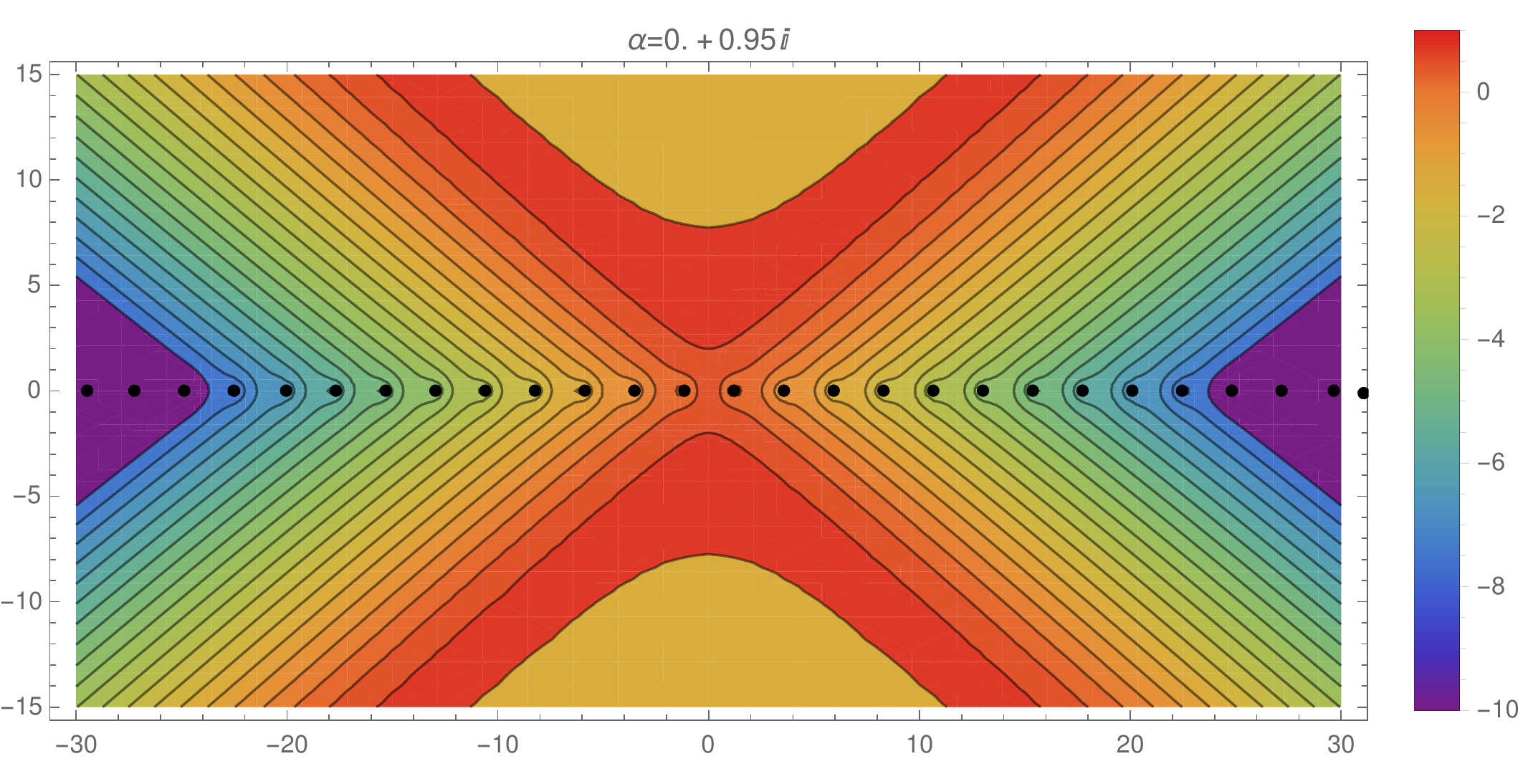}
\caption{Pseudospectra of $J(\alpha)$ with purely imaginary $\alpha$'s approaching $\ii$.}
\label{fig:ps_im}
\end{figure}

%\newpage

{\footnotesize
\bibliographystyle{acm}
\bibliography{references}
%\bibliography{/home/twi/Work/Science/References/references}
}

\end{document}